
\documentclass{amsart}
\usepackage[foot]{amsaddr} 
\usepackage[pagewise]{lineno}
\usepackage[margin=1.5in]{geometry}  
\usepackage{amsmath,amsthm,amssymb,graphicx,tikz,mathtools,enumitem,tikz-cd,tkz-graph,caption,subcaption,pdflscape,adjustbox,comment,amsfonts,float}
\usepackage{hyperref}
\hypersetup{colorlinks=true,allcolors=blue}
\hbadness=99999  


\newcommand{\bbB}{\mathbb{B}}
\newcommand{\bbC}{\mathbb{C}}

\newcommand{\bbN}{\mathbb{N}}

\newcommand{\bbQ}{\mathbb{Q}}
\newcommand{\bbR}{\mathbb{R}}
\newcommand{\bbS}{\mathbb{S}}

\newcommand{\bbZ}{\mathbb{Z}}

\newcommand{\calA}{\mathcal{A}}

\newcommand{\calF}{\mathcal{F}}

\newcommand{\calH}{\mathcal{H}}

\newcommand{\calK}{\mathcal{K}}
\newcommand{\calL}{\mathcal{L}}

\newcommand{\calR}{\mathcal{R}}

\newcommand{\frakS}{\mathfrak{S}}


\DeclareMathOperator{\It}{It}

\DeclareMathOperator{\PC}{PC}
\DeclareMathOperator{\WPC}{WPC}

\DeclareMathOperator{\GL}{GL}
\DeclareMathOperator{\h}{h}

\DeclarePairedDelimiter\floor{\lfloor}{\rfloor}

\DeclareMathOperator{\PA}{PA}

\DeclareMathOperator{\rot}{rot}

\DeclareMathOperator{\Pre}{Pre}
\DeclareMathOperator{\Fix}{Fix}
\DeclareMathOperator{\Mod}{Mod}


\DeclarePairedDelimiterX{\norm}[1]{\lVert}{\rVert}{#1}

\def\fillandplacepagenumber{%
 \par\pagestyle{empty}%
 \vbox to 0pt{\vss}\vfill
 \vbox to 0pt{\baselineskip0pt
   \hbox to\linewidth{\hss}%
   \baselineskip\footskip
   \hbox to\linewidth{%
     \hfil\thepage\hfil}\vss}}


\theoremstyle{plain}
\newtheorem{thm}{Theorem}[section]
\newtheorem{prop}[thm]{Proposition}

\newtheorem{cor}[thm]{Corollary}
\newtheorem{mainthm}{Theorem}

\newtheorem{fundlem}{Lemma}

\theoremstyle{definition}
\newtheorem{defn}[thm]{Definition}
\newtheorem{ex}[thm]{Example}
\newtheorem{rmk}[thm]{Remark}
\newtheorem{question}[thm]{Question}


\title{A Farey tree structure on a family of pseudo-Anosovs}
\author{Ethan Farber}
\email{farbereth@gmail.com}

\begin{document}
\newpage

\begin{abstract}
We introduce a new perspective on a procedure for generating pseudo-Anosov homeomorphisms from post-critically finite interval maps. The central idea is the realization of a tree structure on one such family of pseudo-Anosovs: individual systems, represented by the rational number measuring their rotation at infinity, are the vertices of the tree, while the edges encode dynamical relations between them. We also deepen the dictionary between one-, two-, and three-dimensional invariants associated with these systems.
\end{abstract}

\maketitle

\section{Introduction}

The goal of this paper is to explore the relationship between (1) uniformly expanding continuous maps of the unit interval, and (2) pseudo-Anosov homeomorphisms on a punctured sphere, using the theory of train tracks. Explicitly, the principal objects of study are:

\begin{enumerate}
\item a piecewise-linear map $f: [0,1] \to [0,1]$ whose postcritical orbits comprise a set of cardinality $p$, such that the slope of $f$ is always $\pm \lambda$, for some $\lambda > 1$; and
\item a pseudo-Anosov $\psi: S_{0, p+1} \to S_{0, p+1}$ with stretch factor $\lambda > 1$.
\end{enumerate}

\noindent The map $f$ is a \textit{PCF $\lambda$-expander}, and $\lambda$ is the \textit{stretch factor} of $f$. The double use of $\lambda$ is intentional: both systems have topological entropy $\log{\lambda}$, and we will be concerned with producing from a PCF $\lambda$-expander $f$ a pseudo-Anosov $\psi_f$ with stretch factor $\lambda$, via a procedure called \textit{thickening}. The singularities of $\psi_f$ coincide with the punctures of $S_{0, p+1}$, and are as follows:

\begin{itemize}
\item a punctured $1$-prong singularity for each of the $p$ postcritical points of $f$; and
\item a punctured $(p-2)$-prong singularity, necessarily fixed by $\psi_f$ and denoted $\infty$.
\end{itemize}

It is not always possible to thicken a given PCF $\lambda$-expander to a pseudo-Anosov. In \cite{F} we showed exactly when this is possible for a specific class of interval maps, called \textit{zig-zag maps}. Let $\PA(m)$ denote the set of zig-zag maps with $m \geq 2$ critical points that thicken to a pseudo-Anosov. By analyzing the orbit of the point $x=1$ under $f \in \PA(m)$, we defined an invariant $\Phi: \PA(m) \to \bbQ \cap (0,1)$ and proved that this invariant bijectively parameterizes $\PA(m)$ (cf. Theorem 2 in \cite{F} and Lemma \ref{fundlem:Rational} below).

Our first main result presents a geometric interpretation of this parameterization. Given a pseudo-Anosov $\psi$ fixing a $b$-prong singularity $q$, the \textit{rotation number} of $\psi$ at $q$ is $\rot_q(\psi)= \frac{a}{b}$ if $\psi$ rotates the prongs at $q$ by $a$ spaces counterclockwise.

\begin{mainthm}\label{mainthm:RotationNumber}
Fix $f \in \PA(m)$. Then $\Phi(f)$ measures the local clockwise rotation of $\psi_f$ at the fixed singularity $\infty$: that is,

\[
\Phi(f) = 1 - \rot_\infty(\psi_f).
\]
\end{mainthm}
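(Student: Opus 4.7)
The plan is to realize both $\Phi(f)$ and $\rot_\infty(\psi_f)$ as two readings of a single combinatorial cyclic permutation: the action of $\psi_f$ on the $p-2$ prongs emanating from $\infty$. Once that cycle is described intrinsically in terms of $f$, the theorem becomes a comparison of conventions for measuring position along it.

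First, I would describe the thickening procedure locally near the fixed singularity $\infty$. A neighborhood of $\infty$ in $S_{0,p+1}$ arises by gluing together the ``top'' and ``bottom'' of a strip-like thickening of $[0,1]$, and the $p-2$ prongs at $\infty$ are naturally labeled by a cyclically-ordered list of regions determined by $\PC(f)$ on $[0,1]$. I would pin down this identification precisely, keeping careful track of the regions absorbed by the gluing at the endpoints of $[0,1]$ and recording the induced cyclic order on the surviving prongs. This is the geometric half of the argument.

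Next, I would compute the $\psi_f$-action on these prongs directly from $f$. Since $\psi_f$ fixes $\infty$, the action is a cyclic rotation, whose rotation number can be read off from how $f$ permutes the regions described above. For $f \in \PA(m)$ the zig-zag combinatorics makes this transparent: the orbit of the prong containing $x=1$ sweeps out the entire cycle. Unwinding the definition of $\Phi(f)$ from \cite{F} (Lemma \ref{fundlem:Rational}), $\Phi(f)$ records the fractional position of an element of the orbit of $x=1$ within this same cycle, but ordered from left to right on $[0,1]$. Because the left-to-right order on the interval reverses when passed to the cyclic order around $\infty$ on the oriented sphere, the counterclockwise rotation $\rot_\infty(\psi_f)$ equals $1$ minus the fraction recorded by $\Phi$, yielding the theorem.

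The main obstacle will be the bookkeeping in the first step: identifying prongs at $\infty$ with the correct $\PC(f)$-intervals in a way that is consistent with the orientation of $S_{0,p+1}$, and tracking exactly which complementary regions are collapsed by the gluing. Once that identification is fixed correctly, the orientation flip that produces the ``$1-$'' and the matching of rotation numbers with $\Phi$ should both follow essentially by inspection from the piecewise-linear form of a zig-zag map and the definition of $\Phi$.
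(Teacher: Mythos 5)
Your plan follows essentially the same route as the paper. Both arguments identify the prongs of $\psi_f$ at $\infty$ with the subintervals of $[0,1]$ cut out by $\PC(f)$, read the $\psi_f$-action on the prongs off the $f$-action on those subintervals (which is exactly the permutation $\tilde{\rho}$ of Remark~\ref{rmk:Transitive}), and then observe that the counterclockwise cyclic order on the prongs around $\infty$ is the reverse of the left-to-right order on $[0,1]$ -- which is precisely where the ``$1-$'' comes from. So at the level of ideas, your proposal is correct and is the proof.

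The one place where your sketch is substantially vaguer than the paper's argument -- and where you correctly flag the danger -- is the claim that the cyclic order ``reverses.'' This is not a formal orientation observation but an actual gluing computation. In the paper, the surface $S^2$ is constructed from the Galois lift $f_G$ acting on a limit set $\Lambda_f \subseteq I \times \bbR$ (a finite union of Euclidean rectangles), and the singularity $\infty$ is the image of a repelling orbit $\{p_1, \ldots, p_{n-1}\}$ on the lower horizontal boundary $\partial_H^L\Lambda_f$, one point in each component $\widetilde J_i$. The reversal you need is the statement that cutting $\widetilde J_i$ at $p_i$ produces left and right halves and that the gluing pattern identifies $\widetilde J_i^r$ with $\widetilde J_{i+1}^l$ by an orientation-reversing isometry; the paper establishes this by noting the same pattern at $k_1$ on the upper boundary and propagating it by $f_G$. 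Once that is in place, the arithmetic is $f_G(p_i) = p_{i+(n-k)}$, so traversing the cycle of prongs requires $(n-1)-(n-k) = k-1$ counterclockwise clicks, giving $\rot_\infty(\psi_f) = \frac{k-1}{n-1} = 1 - \Phi(f)$. If you carry out your ``strip-like thickening'' bookkeeping precisely, you will arrive at exactly this Galois-lift picture; the only caution is that the reversal must be \emph{derived} from the edge identifications rather than asserted as an orientation convention.
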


Andr\'e de Carvalho suggested to us the possibility of Theorem \ref{mainthm:RotationNumber} in the context of his work with Toby Hall on generating pseudo-Anosovs from \textit{unimodal maps}, i.e. PCF $\lambda$-expanders with a single critical point. Indeed, we would be remiss to not mention that our techniques in \cite{F} and in the present paper take great inspiration from those of de Carvalho and Hall in \cite{dCH}. We prove Theorem \ref{mainthm:RotationNumber} in Section \ref{sec:rotation}. While we are not currently aware of a proof of the analogous statement in the case of unimodal maps, we suspect that the statement holds and is amenable to a similar proof strategy.

\bigskip

Let us further contextualize Theorem \ref{mainthm:RotationNumber}. Thickening an interval map $f: I \to I$ to a pseudo-Anosov $\psi_f: S \to S$ involves embedding $I$ into $S$ as an invariant train track $\tau$ for $\psi_f$. To do so, we add a loop of length $1$ to $I$ at each point in the forward orbit of a critical point of $f$, representing the fact that some iterate of $f$ makes a turn at that point. We add a puncture to $S$ within each loop to make it non-trivial in homology. In the language of train tracks, these loops are \textit{infinitesimal edges}. The remaining edges of $\tau$ (i.e., the subintervals of $I$) are \textit{expanding edges}, and produce Markov partitions for both $f$ and $\psi_f$. In particular, $f$ and $\psi_f$ are both Bernoulli processes with the same transition matrix, and so the dynamics of $f$ determine the dynamics of $\psi_f$ almost entirely.

Thus, one may hope to transport the techniques of one-dimensional dynamics to study the pseudo-Anosovs that are thickenings of interval maps. More specifically, one desires to interpret invariants of $f$ using invariants of $\psi_f$. Theorem \ref{mainthm:RotationNumber} is a first step in this direction. Indeed, while $\Phi(f)$ is initially defined using the permutation action of $f$ on the orbit of $x=1$, we see that it has a new interpretation via the dynamics of $\psi_f$.

Additionally, it is often fruitful to study dynamical systems as members of a larger family. Interval maps come in natural families determined by variations in their \textit{kneading data}. The maps in $\PA(m)$ constitute one such family (cf. Section \ref{subsection:Knead}). If the interval maps in a given family thicken to pseudo-Anosovs, it is natural then to study the pseudo-Anosovs as a collection. This perspective provides an alternative to the standard ways of creating families of pseudo-Anosovs, such as

\begin{enumerate}
\item composing Dehn twists along a given set of curves according to a pattern, or
\item taking branched covers of pseudo-Anosovs.
\end{enumerate}

Indeed, in case (1) the pseudo-Anosovs all act on the same surface $S$, while in case (2) the pseudo-Anosovs all have the same stretch factor. Varying the kneading data of the interval map generating a pseudo-Anosov, however, will generally change both the stretch factor of the pseudo-Anosov and the number of punctures of the surface on which it acts.\\

In this context, Theorem \ref{mainthm:RotationNumber} proposes analyzing the pseudo-Anosovs generated by $\PA(m)$ using $\bbQ \cap (0,1)$. As we will see, this is the vertex set of a special graph $\calF$ whose edges encode relations between pairs of maps in $\PA(m)$. We refer to $\calF$ as the \textit{Farey tree}, and in order to understand its structure we will make use of a second invariant of a map $f \in \PA(m)$: the \textit{digit polynomial} $D_f(t)$. The digit polynomial has integral coefficients and satisfies $D_f(\lambda(f)) = 0$. Its coefficients encode the orbit of $x=1$ under $f$, which is enough to determine the entire kneading data of $f$ due to the simplicity of zig-zag maps. 


By studying how $D_f$ depends on the location of $\Phi(f)$ in $\calF$, we obtain the following monotonicity result. Recall that $\lambda(f) = \lambda(\psi_f)$.

\begin{mainthm}\label{mainthm:mono}
Given $f, g \in \PA(m)$, we have

\[
\Phi(f) < \Phi(g) \iff \lambda(\psi_f) < \lambda(\psi_g).
\]
\end{mainthm}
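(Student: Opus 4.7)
The plan is to prove Theorem \ref{mainthm:mono} by working through the digit polynomial $D_f(t) \in \bbZ[t]$, which vanishes at $\lambda(f) = \lambda(\psi_f)$. Since the coefficients of $D_f$ encode the orbit of $x=1$ under $f$ --- which in turn determines $\Phi(f)$ by the parameterization of Lemma \ref{fundlem:Rational} --- monotonicity of the stretch factor in $\Phi(f)$ should reduce to a combinatorial monotonicity statement about $D_f$ as $\Phi(f)$ varies along $\calF$.

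First, I would exploit the Farey tree structure on $\bbQ \cap (0,1)$ to reduce the claim to pairs of rationals that are \emph{adjacent} in $\calF$, i.e., that differ by a single mediant step. Any two rationals $\Phi(f) < \Phi(g)$ are connected by a monotonically increasing path in $\calF$ whose successive edges correspond to mediant relations, so if monotonicity is known for pairs differing by a single Farey step, transitivity delivers the full statement.

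For such adjacent pairs, the heart of the argument is a pointwise comparison of $D_f$ and $D_g$: on a real interval containing both $\lambda(f)$ and $\lambda(g)$, one polynomial should dominate the other, with strict inequality somewhere. This should follow from an explicit description of how the coefficients of the digit polynomial transform under a mediant move in $\calF$, obtained by tracing the corresponding Farey operation on the postcritical orbit of $x=1$. Together with an identification of $\lambda(f)$ as the appropriate real root of $D_f$ (say, the largest real root in $(1, \infty)$), an intermediate value argument then forces $\lambda(f) < \lambda(g)$.

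The main obstacle will be the coefficient-level analysis. A mediant move can change the degree of $D_f$ --- since the cardinality $p$ of the postcritical set need not be constant across a Farey edge --- and the modification to the orbit of $x=1$ is not a purely local perturbation. The technical core of the proof will be identifying a stable part of $D_f$ that is shared across the edge together with a controlled difference term, in such a way that the sign of the difference is determined by the Farey direction. The rigidity imposed by zig-zag maps, whose slope is always $\pm \lambda$, should provide the structural control required to make this analysis tractable.
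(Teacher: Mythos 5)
Your architecture --- reducing to Farey-adjacent pairs along paths in $\calF$ and then handling a single mediant step --- matches the paper's high-level strategy (Propositions \ref{prop:parseq} and \ref{prop:Farey_itin} play exactly this role). But the engine you propose for the adjacent-pair step is genuinely different from, and as stated weaker than, what the paper does, and it has a real gap.

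The paper never analyzes roots of $D_f$ at all in proving Theorem \ref{mainthm:mono}. Instead it compares itineraries under the twisted lexicographic order $\leq_E$ (Proposition \ref{prop:cutkneadmono}), upgrades that to a comparison $\calK_f \ll \calK_g$ of kneading data (Proposition \ref{prop:itmono}), and then invokes the Milnor--Thurston monotonicity result (Proposition \ref{prop:kneadmono}, Corollary 4.5 of \cite{MTr}) as a black box to conclude $\h(f) \leq \h(g)$. The adjacent-pair step is trivialized: by Equations (\ref{eq:transform3}) and (\ref{eq:possign}) the itineraries of $f^L$, $f$, $f^R$ agree on a long prefix and differ only in the symbol $m-2 < m-1 < m$ at a position of positive cumulative sign, and that is the entire comparison. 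All the hard analytic content is outsourced to \cite{MTr}.

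Your proposed replacement --- ``pointwise domination of $D_f$ by $D_g$ on an interval containing both stretch factors'' --- cannot hold as literally stated: $\deg D_f \neq \deg D_g$ for a mediant pair, so one polynomial dominates the other near $+\infty$, while a direct computation shows the opposite inequality at $\lambda(f)$ itself. What \emph{does} work is a single-point evaluation, and it works cleanly in one tree direction. If $h$ is a right-child of $f$, then Theorem \ref{thm:Farey_transform} shows that $\nu(h)$ begins with $\overline{\nu(f)}=(m)\cdot\textbf{w}(f)\cdot(m)$, so the coefficients $c'_1,\dots,c'_{b}$ of $D_h$ equal $c_1,\dots,c_b$ of $D_f$ (including $c'_b = c_b = m$). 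Writing $b' = \deg D_h - 1$ and subtracting $t^{\,b'-b} D_f(t)$, one gets $D_h(\lambda(f)) = 1 - \lambda(f)^{\,b'-b} - (\text{nonnegative sum}) < 0$, and since $\lambda(h)$ is the largest real root of the monic $D_h$, this forces $\lambda(h) > \lambda(f)$. However, for a left-child $h'$ of $f$, the corresponding prefix is $\widehat{\nu(f)} = (m)\cdot\textbf{w}(f)\cdot(m-2)$, so $c'_b = m-2 \neq m = c_b$, and the same subtraction yields an extra $+2\lambda^{\,b'-b+1}$ competing with the negative terms. The sign of $D_{h'}(\lambda(f))$ is no longer immediate, and even after establishing it one must also know that $\lambda(f)$ being in the region $\{D_{h'}>0\}$ forces $\lambda(f) > \lambda(h')$, which requires control over the \emph{other} real roots of $D_{h'}$ in $(1,\infty)$ that you have not supplied. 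You would need either a reciprocity trick that reduces the left-child case to the right-child one, or a localization of all real roots of $D_{h'}$ except $\lambda(h')^{\pm 1}$ inside $[-1,1]$; neither is done, and neither is obviously true for all $m$. This is the missing idea, and it is exactly the part of the argument that the paper's appeal to Milnor--Thurston renders unnecessary.
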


Since the topological entropy of both $f$ and $\psi_f$ is $\log{\lambda(\psi_f)}$, one may interpret Theorem \ref{mainthm:mono} as a monotonicity result for the entropy of the one-paramater family of pseudo-Anosovs generated by $\PA(m)$. Entropic monotonicity results exist for many natural families of interval maps (cf. for example \cite{MTr} and \cite{BvS}).

The proof of Theorem \ref{mainthm:mono} is a chain of monotonicity statements. The key driver is Theorem \ref{thm:Farey_transform}, which presents a pair of transformation laws for the kneading data of $f = \Phi^{-1}(q)$ as $q \in \calF$ varies. We treat $\calF$ in Section \ref{sec:Farey} and describe how it models both $\PA(m)$ and $\Pi(m)$ in Section \ref{sec:model}. We then prove Theorem \ref{mainthm:mono} in Section \ref{sec:mono}.\\

Techniques from kneading theory also allow us to obtain best possible bounds on $\lambda(\psi_f)$.

\begin{cor}
Let $f \in \PA(m)$.  Then,

\[
\inf \{\lambda(\psi_f) : f \in \PA(m)\} = \frac{m+1 + \sqrt{(m+1)^2 - 8}}{2}
\]

\noindent and

\[
\sup \{\lambda(\psi_f) : f \in \PA(m)\} = m+1.
\]

\end{cor}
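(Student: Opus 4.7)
By Theorem \ref{mainthm:mono}, the map $q \mapsto \lambda(\psi_{\Phi^{-1}(q)})$ is strictly monotone on $\bbQ \cap (0,1)$, so determining the infimum and supremum reduces to computing the two one-sided limits

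\[
\lim_{q \to 0^+} \lambda(\psi_{\Phi^{-1}(q)}) \quad\text{and}\quad \lim_{q \to 1^-} \lambda(\psi_{\Phi^{-1}(q)}).
\]

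\noindent This is the organizing step; everything else is an explicit kneading-combinatorics computation.

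First I would dispose of the supremum. Since any $f \in \PA(m)$ is a piecewise linear map with $m+1$ intervals of monotonicity, each of constant slope $\pm \lambda$, summing the lengths of the images $\lambda \ell_i$ and using that a point of $[0,1]$ can be covered at most $m+1$ times yields the universal upper bound $\lambda \leq m+1$. To see that this bound is approached, I would exhibit an explicit sequence $f_n \in \PA(m)$ with $\Phi(f_n) \to 1^-$ (for instance $\Phi(f_n) = (n-1)/n$, following an appropriate branch of the Farey tree $\calF$ to its right endpoint) and verify via the digit polynomial that $\lambda(f_n) \to m+1$; combined with monotonicity and the strict inequality $\lambda < m+1$ (since $f$ is PCF, the full-shift map itself is excluded from $\PA(m)$), this gives the supremum.

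For the infimum I would likewise produce a sequence $\Phi(f_n) \to 0^+$, say along the Farey branch with $\Phi(f_n) = 1/n$, and track the digit polynomial $D_{f_n}(t)$ using the transformation laws in Theorem \ref{thm:Farey_transform}. Iterating these transformations along the path in $\calF$ tending to $0$ gives a recursion whose limit (up to normalization) should be the quadratic $t^2 - (m+1)t + 2$; the larger root of this polynomial is exactly $\frac{m+1 + \sqrt{(m+1)^2 - 8}}{2}$, and because $\lambda(f_n)$ is a root of $D_{f_n}$, a continuity-of-roots argument completes the infimum.

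The principal obstacle is the identification in the limit of the digit polynomial along each of the two sequences. The combinatorics of the Farey transformations can make the recursion on $D_{f_n}$ somewhat delicate, especially since the polynomials grow in degree. I would manage this by choosing the sequences of rationals along a single Farey branch so that each $D_{f_{n+1}}$ is obtained from $D_{f_n}$ by one explicit substitution, and then by identifying a stable factor of $D_{f_n}(t)$ that carries the relevant root. Once the limiting polynomial is pinned down in each case, reading off its largest real root gives the stated bounds.
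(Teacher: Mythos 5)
Your organizing step (use Theorem \ref{mainthm:mono} to reduce both bounds to one-sided limits along the Farey branches $1/n \to 0^+$ and $(n-1)/n \to 1^-$) is exactly the paper's first move; the two proofs then diverge in how they extract the limiting value of the slope.

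The paper computes the limit in one dimension: the zig-zag maps $g_n = \Phi^{-1}(1/n)$ and $h_n = \Phi^{-1}((n-1)/n)$ converge uniformly to limiting zig-zag maps $\textbf{0}_m$ and $\textbf{1}_m$, and one reads off from the (very simple) principal kneading sequences that $\textbf{0}_m(1)$ and $\textbf{1}_m(1)$ are fixed points of a single explicit linear branch. Setting $\textbf{0}_m(\lambda_0 - m) = \lambda_0 - m$ with the branch $x \mapsto \lambda_0 x - (m-2)$ yields $\lambda_0^2 - (m+1)\lambda_0 + 2 = 0$, and similarly $\textbf{1}_m$ forces $\lambda_1 = m+1$. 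This is short because the limiting map is itself a legitimate zig-zag whose slope can be computed directly. Your route, going through the digit polynomial and a limit of roots, is also viable, but there is a concrete error in how you phrase the final step: there is \emph{no} stable polynomial factor of $D_{g_n}(t)$ carrying $\lambda(g_n)$. For example, with $m=2$ and $n=2$ one has $D_{g_2}(t) = t^3 - 2t^2 - 2t + 1$, and the candidate quadratic $t^2 - 3t + 2 = (t-1)(t-2)$ does not divide it. What is actually true, and what makes the argument go, is the identity
\[
(t-1)\,D_{g_n}(t) \;=\; t^n\bigl(t^2 - (m+1)t + 2\bigr) \;-\; \bigl(2t^2 - (m+1)t + 1\bigr),
\]
obtained from $\textbf{w}(g_n) = (m-2)^{n-2}$ and Lemma \ref{fundlem:Digit}. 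Evaluating at the Perron root $\lambda_n = \lambda(g_n)$, the right-hand correction term is bounded while $\lambda_n^n \to \infty$ (as $\lambda_n \geq m \geq 2$), forcing $\lambda_n^2 - (m+1)\lambda_n + 2 \to 0$ and hence $\lambda_n \to \frac{m+1+\sqrt{(m+1)^2-8}}{2}$. An analogous identity $(t-1)D_{h_n}(t) = t^{n+1}(t-(m+1)) + (m+1)t - 1$ gives $\lambda(h_n) \to m+1$. If you replace the ``stable factor'' claim with this explicit dominant-term computation, your proof closes; as written, the continuity-of-roots step has a gap, since the degrees grow without bound and no fixed divisor exists.
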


\begin{proof}
Given $f \in \PA(m)$, $\lambda(f)$ is equal to the magnitude of the slope of the piecewise-linear map $f: I \to I$. The integer floor of $\lambda(f)$ is $\floor{\lambda(f)}=m$. Thus, $\lambda(f)$ is bounded from below by $m$ and from above by $m+1$.

For $n \geq 1$, define the maps $g_n, h_n \in \PA(m)$ by

\[
g_n = \Phi^{-1} \left ( \frac{1}{n} \right )\hspace{5mm} \text{and} \hspace{5mm} h_n = \Phi^{-1} \left ( \frac{n-1}{n} \right ).
\]

\noindent Theorem \ref{mainthm:mono} implies that the sequences $\{\lambda(g_n)\}_n$ and $\{\lambda(h_n)\}_n$ are monotonically decreasing and monotonically increasing, respectively. Since these sequences are bounded, it follows that they converge. Indeed, we must have

\[
\lambda_0 := \lim_{n \to \infty} \lambda(g_n) = \inf\{\lambda(f) : f \in \PA(m)\}
\]

\noindent and

\[
\lambda_1 := \lim_{n \to \infty} \lambda(h_n) = \sup\{\lambda(f): f \in \PA(m) \}.
\]

The maps $g_n$ converge uniformly to a $\lambda_0$-zig-zag map $\textbf{0}_m$, and the maps $h_n$ converge uniformly to a $\lambda_1$-zig-zag map $\textbf{1}_m$ (cf. Definition \ref{def:0and1}). In the case of $\textbf{1}_m$, it is not hard to see from the principal kneading sequence (cf. Definition \ref{def:prin}) of $h_n$ that $\textbf{1}_m(1) = \lambda_1-m$ is a fixed point of $\textbf{1}_m$ in the subinterval $[m \cdot \lambda_1^{-1}, 1]$, on which $\textbf{1}_m$ is defined by the linear map

\[
\textbf{1}_m(x) = \lambda_1 x - m \hspace{5mm} \text{for $x \in [m \cdot \lambda_1^{-1}, 1]$}.
\]

\noindent From this one determines that $\lambda_1=m+1$. On the other hand, from the principal kneading sequence of $g_n$ one sees that $\textbf{0}_m(1) = \lambda_0-m$ is a fixed point of $\textbf{0}_m$ in the subinterval $[(m-2) \cdot \lambda_0^{-1}, (m-1) \cdot \lambda_0^{-1}]$. On this subinterval $\textbf{0}_m$ is defined by the linear map

\[
\textbf{0}_m(x) = \lambda_0 x - (m-2) \hspace{5mm} \text{for $x \in [(m-2) \cdot \lambda_0^{-1}, (m-1) \cdot \lambda_0^{-1}]$}.
\]

\noindent Thus we find 

\[
\lambda_0-m = \textbf{0}_m(\lambda_0 -m) = \lambda_0^2 - m\lambda_0 - (m-2).
\]

\noindent Solving for $\lambda_0$ gives

\[
\lambda_0 = \frac{m+1 + \sqrt{(m+1)^2-8}}{2}.
\]
\end{proof}

Just as Theorem \ref{mainthm:RotationNumber} provides an interpretation of the first invariant $\Phi(f)$ from the perspective of $\psi_f$, we seek to understand the second invariant $D_f$ using two- and three-dimensional techniques. Such an understanding provides immediate leverage over several topological invariants associated to $\psi_f$. Indeed, we will see that $D_f$ will coincide or nearly coincide with these invariants.

The core idea in the following theorem is that, despite its original definition, $D_f$ is the characteristic polynomial of several matrices naturally associated to $f$ or to $\psi_f$.

\begin{mainthm}\label{mainthm:poly_invts}
Let $f \in \PA(m)$ and set $n = |\PC(f)| = 1+\deg(D_f)$. Set $\lambda = \lambda(f) = \lambda(\psi_f)$. 

\begin{enumerate}
\item The digit polynomial determines the Artin-Mazur zeta function of $f$: 

\[
\zeta_f(t) = \frac{1}{\calR(D_f(t))} = \frac{1}{\calR(\det(tI-W_f))}.
\]

\item The digit polynomial is equal to the strong Markov polynomial of $f$:

\[
D_f(t) = \chi_M(f; t).
\]

\item The digit polynomial determines the homology, symplectic, and puncture polynomials of $\psi_f$, defined by Birman-Brinkmann-Kawamuro (\cite{BBK}):

\[
D_f(t) = h(\psi_f; t) = \begin{cases}
s(\psi_f; t) & \text{if $n$ is odd}\\
s(\psi_f; t)(t+1) & \text{if $n$ is even.}
\end{cases}
\]

\item Let $\bbS$ be the surface obtained from the orientation double cover of $\psi_f$ by filling in the lifts of the punctured $1$-prong singularities of $\psi_f$. Denote by $\chi_+(t)$ (resp., $\chi_-(t)$) the characteristic polynomial of the lift $\psi_+$ (resp., $\psi_-$) acting on $H_1(\bbS; \bbZ)$. Then

\[
D_f(t) = \chi_+(t) \hspace{5mm} \text{and} \hspace{5mm} D_f(-t) = \chi_-(t).
\]


\item Let $\beta_f$ be any $n$-braid representative of $\psi_f$ obtained by ripping open $\infty \in S_{0,n+1}$ to a boundary circle. Let $\bbB(\beta_f, z)$ denote the reduced Burau matrix for $\beta_f$, and set $\chi(\beta_f; t) = \det(tI-\bbB(\beta_f, -1))$. Then

\[
\chi(\beta_f; t) = \begin{cases}
D_f(t) & \text{if $\chi(\beta_f; t)$ has $\lambda$ as a root}\\
D_f(-t) & \text{if $\chi(\beta_f; t)$ has $-\lambda$ as a root.}
\end{cases}
\]

\noindent Moreover, composing $\beta_f$ with the full twist $\Delta_n^2$ negates the sign of the variable $t$.
\end{enumerate}
\end{mainthm}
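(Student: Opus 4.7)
The plan is to identify a single core matrix---the transition matrix $W_f$ of the Markov partition of $f$ induced by the postcritical set, equivalently the transition matrix on the expanding edges of the invariant train track $\tau$ used in the thickening---and then to express each of the five polynomials in the statement in terms of $W_f$. The starting point is to verify directly from the definition of $D_f$ (in terms of the coded orbit of $x=1$) that $D_f(t) = \det(tI - W_f)$; this is essentially a bookkeeping argument, since the postcritical orbit simultaneously produces the digits of $1$ and the partition points of the Markov decomposition. With this identification in hand, part (1) is immediate from the Bowen--Lanford formula $\zeta_f(t) = 1/\det(I-tW_f)$ together with the definition of the reciprocal operation $\calR(p)(t) = t^{\deg p}\, p(1/t)$, and part (2) is essentially the statement that this postcritical Markov partition \emph{is} the strong Markov partition of $f$, a consequence of the piecewise-linear zig-zag structure.

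For part (3), I would use the fact that the invariant train track $\tau$ has two types of edges: $n$ expanding edges (indexed by the Markov rectangles) and $p$ infinitesimal loops (one per puncture). The full transition matrix of $\tau$ is block-triangular, with the infinitesimal block accounting for the permutation action of $\psi_f$ on punctures and the expanding block being $W_f$. The BBK homology, symplectic, and puncture polynomials all arise from the action of $\psi_f$ on $H_1(S_{0,n+1}; \bbZ)$ or its quotients by the span of puncture loops; tracking these quotients on the train track level reduces each polynomial to $\det(tI-W_f) = D_f(t)$, adjusted by a single factor of $(t+1)$ in the even case to account for the parity of the Euler characteristic. For part (4), the orientation double cover is required because the stable/unstable foliations of $\psi_f$ may be non-orientable at the odd-prong singularities; filling in lifts of $1$-prong punctures yields a closed surface whose $H_1$ splits into a $\pm 1$-eigenspace decomposition under the deck involution. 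The matrices of $\psi_+$ and $\psi_-$ on these eigenspaces are $W_f$ and $-W_f$ up to conjugation (the sign arising from orientation reversal across fold lines), giving $\chi_+(t) = D_f(t)$ and $\chi_-(t) = D_f(-t)$.

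For part (5), I would invoke the classical identification (due to Squier, and used extensively in Birman--Hilden theory) between the reduced Burau representation evaluated at $t=-1$ and the action of the braid on $H_1$ of the double cover of the punctured disk branched over the punctures. Capping the boundary of this disk recovers (up to puncturing data) the orientation double cover surface $\bbS$ of part (4), so that $\chi(\beta_f; t)$ coincides with one of $\chi_\pm(t)$, depending on the sign of the eigenvalue $\pm\lambda$. The effect of $\Delta_n^2$ is then a direct computation: the full twist acts by $t^n \cdot I$ on the reduced Burau representation, so at $t=-1$ it multiplies the matrix by $(-1)^n$, which swaps the $\pm\lambda$ eigenvalue and hence the two cases $D_f(t)$ versus $D_f(-t)$.

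I expect the main obstacle to be part (5). The homology-level statements (2)--(4) follow a uniform template once $W_f$ is identified: list a basis, compute the matrix, read off the characteristic polynomial. Part (5), however, requires choosing a specific braid representative $\beta_f$ via the rip-open construction at $\infty$, and the Burau matrix depends on this choice through its monodromy structure; reconciling this with the intrinsic double-cover picture requires careful sign tracking across the branched-cover isomorphism and a check that the rip-open operation is compatible with the branched cover of the disk. The twist computation $\bbB(\Delta_n^2, t) = t^n I$ is standard but must be combined correctly with the eigenvalue-swapping interpretation to recover the final clean statement.
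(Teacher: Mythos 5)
Your architectural plan---reduce all five polynomials to the characteristic polynomial of the transition matrix on the expanding edges of the invariant train track, i.e.\ the strong Markov matrix $M_f$---is the paper's strategy, and you have identified the correct unifying object. Be careful with notation, though: the $W_f$ in the theorem statement is the \emph{weak} Markov matrix (cut at $\WPC(f)$, so size $(n-1)+(m-1)$), not the strong one; the claim $D_f(t)=\det(tI-W_f)$ is literally false, and what you mean is $D_f(t)=\det(tI-M_f)=\chi_M(f;t)$.

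The main substantive difference is the direction of dependence between parts (1) and (2). You propose to prove $D_f=\chi_M$ first by ``bookkeeping'' and then obtain (1) from a Bowen--Lanford-type identity; the paper does the opposite, and for a reason. It introduces the auxiliary polynomial $\phi_f(t)$ from the signed digit coefficients, proves $1-\phi_f(t)=\calR(D_f)(t)$ by matching coefficients (Proposition \ref{prop:digitpolyeq}), then invokes Suzuki's theorem \cite{S} to get $\zeta_f(t)=1/(1-\phi_f(t))=1/\det(I-tW_f)$, and only then extracts $\chi_W=t^{m-1}D_f$ (Proposition \ref{prop:BLzeta}) and $\chi_M=D_f$ (Corollary \ref{cor:chardigit}) by comparing reverses and using Proposition \ref{prop:Markov_poly}. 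The paper explicitly calls the identity $D_f=\chi_M$ ``less obvious,'' and the difficulty your sketch elides is that the coefficients $c_i$ of $D_f$ index the orbit of $x=1$ in \emph{dynamical} order whereas $M_f$ is indexed by the partition intervals $J_i$ in \emph{spatial} order; there is no immediate companion-matrix identification, which is exactly why the paper detours through the zeta function. Relatedly, ``Bowen--Lanford'' is the wrong attribution for a PCF interval map---periodic-point counting at partition endpoints is subtle, and the paper's Proposition \ref{prop:BLzeta} leans on Suzuki's Theorem 1.1 for exactly this reason.

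Two smaller points. For part (3), the paper does not apply Birman--Brinkmann--Kawamuro to the thickening train track directly: it first folds each non-extremal infinitesimal loop (Figure \ref{fig:folding}) to produce a Bestvina--Handel track satisfying the hypotheses of \cite{BBK}, then computes $\dim W(G,f)=n-1$ from the edge/vertex count and exhibits the explicit basis $\{\eta_i\}$; your block-triangular picture is the right intuition but requires this normalization. For part (5), your computation $\bbB(\Delta_n^2,z)=z^n I$ is correct, which gives $\bbB(\Delta_n^2,-1)=(-1)^n I$; but then your conclusion that the full twist ``swaps the $\pm\lambda$ eigenvalue'' holds only for $n$ odd, since for $n$ even the scalar is $+1$ and the polynomial is unchanged. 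You flag this as the hard part, which is the right instinct---but note that the paper's own Remark \ref{rmk:sign} asserts $\bbB(\Delta_n^2,-1)=-I$ unconditionally, so this parity issue deserves an explicit resolution before you commit to the final clause.
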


Note that since $D_f(t)$ is reciprocal, i.e. $\calR(D_f(t)) = D_f(t)$, statement (\ref{poly_invts:zeta}) implies that 

\[
\zeta_f(t) = \frac{1}{D_f(t)}.
\]


We remark that any pseudo-Anosov that is the thickening of a PCF $\lambda$-expander is defined on a punctured sphere and has at most one singularity with $3$ or more prongs. In a forthcoming paper with Karl Winsor we prove the converse statement: every such pseudo-Anosov is the thickening of a PCF $\lambda$-expander $f$. In this more general setting, the digit polynomial is replaced by the strong Markov polynomial $\chi_M(f; t)$, which is simply the characteristic polynomial of a minimal Markov partition associated to $f$. Minor modifications to the proof of Theorem \ref{mainthm:poly_invts} will then prove analogous statements for the relevant invariants of $\psi_f$.

We will close with a brief discussion of these and other thoughts in Section 8, after proving Theorem \ref{mainthm:poly_invts} in Section \ref{sec:poly_invts}.

\subsection*{Acknowledgements}
First and foremost, the author would like to thank his advisor Kathryn Lindsey, for her ever-keen insights, feedback, and encouragement. A great deal of gratitude goes to Andr\'e de Carvalho and Toby Hall, whose ingenious and precise work was the well to which the author repeatedly returned over many months. Chenxi Wu was the source of innumerable helpful conversations.

And finally, it cannot be overstated how deeply the author relied on the camaraderie, empathy, and compassion of the graduate students at Boston College. We still do not have a contract.

\setcounter{tocdepth}{1}
{ \hypersetup{linkcolor=black} \tableofcontents }


\section{Preliminaries}

In this section we recall the key definitions and background from \cite{F} needed to prove Theorem \ref{mainthm:RotationNumber}. We direct the reader to Sections 2, 3, 5, 6, and 7 of \cite{F} for details. The informed reader may skip this section.

\subsection{The sets $\PA(m)$}

Throughout this paper and unless stated otherwise, we will use $I$ to denote the closed unit interval. All maps $f: I \to I$ are assumed to be continuous with finitely many \textit{critical points}, i.e. points having no neighborhood on which $f$ is injective. A \textit{critical value} is the image of a critical point.

\begin{defn}
Choose $\lambda > 1$. A \textit{uniform $\lambda$-expander} is a piecewise-linear map $f: I \to I$ such that each linear branch of $f$ has slope $\pm \lambda$. In this case, we say $f$ has \textit{constant slope $\lambda$}.
\end{defn}

\begin{rmk}
Misiurewicz and Szlenk \cite{MS} proved that the topological entropy of any uniform $\lambda$-expander $f$ is

\[
\h(f) = \log{\lambda}.
\]
\end{rmk}

\begin{defn}\label{defn:Zigzag}
A \textit{$\lambda$-zig-zag map} (or simply \textit{zig-zag map} or \textit{zig-zag} when $\lambda$ is understood) is a uniform $\lambda$-expander $f: I \to I$ whose critical points are precisely the numbers 

\[
k_i = i \cdot \lambda^{-1}, \hspace{5mm} \text{for $i = 1, \ldots, \floor{\lambda}$.}
\]

\noindent Note that a zig-zag $f$ satisfies $f(0) \in \{0, 1\}$. We say $f$ is \textit{positive} if $f(0)=0$, and \textit{negative} if $f(0)=1$.
\end{defn}

Note that the number of critical points of a $\lambda$-zig-zag map $f$ is $m=\floor{\lambda}$. Thus when $\lambda \geq 2$, the critical values of $f$ are precisely $0$ and $1$. Throughout the paper, fixing $m \geq 2$ amounts to fixing a number of critical points for the underlying zig-zag map, which is necessary in order to properly describe the combinatorics.

\begin{defn}\label{defn:postcrit}
A \textit{postcritical point} of $f: I \to I$ is a point of the form $f^n(k)$ for $n \geq 1$ and $k$ a critical point. The \textit{postcritical set} of $f$ is the set of all postcritical points of $f$, and is denoted $\PC(f)$. If $\PC(f)$ is a finite set, then $f$ is \textit{postcritically finite} or \textit{PCF}.
\end{defn}

\subsection{Pseudo-Anosovs, train tracks, and thickening}

Let $S = S_{g,p}$ be the closed, connected surface of genus $g$ with $p$ marked points, which we will often treat as punctures. Assume that $6g-6+2p>0$.

\begin{defn}
A \textit{pseudo-Anosov} is a homeomorphism $\psi: S \to S$ for which there exist (1) two transverse singular measured foliations $(\calF^s, \mu^s)$ and $(\calF^u, \mu^u)$ of $S$ and (2) a real number $\lambda > 1$, such that:

\begin{itemize}
\item $\psi \cdot (\calF^s, \mu^s) = (\calF^s, \lambda^{-1}\mu^s)$, and
\item $\psi \cdot (\calF^u, \mu^u) = (\calF^u, \lambda \mu^u)$.
\end{itemize}

\noindent That is, $\psi$ preserves both $\calF^s$ and $\calF^u$, while scaling the former's transverse measure by $\lambda^{-1}$ and the latter's by $\lambda$.
\end{defn}

Call a point $s \in \overline{S}$ a \textit{$k$-pronged point} if $k=k(s)$ half-leaves of $\calF^u$ land at $s$. If $k \neq 2$, we call $s$ a \textit{$k$-pronged singularity} of $\psi$. Sometimes $2$-pronged points are considered degenerate singularities. There are only finitely-many (non-degenerate) singularities of $\psi$, and the Euler-Poincar\'e formula imposes restrictions on the number of prongs at these singularities:

\begin{equation}\label{eq:EP}
2 \chi \Big (\overline{S} \Big ) = \sum_s 2-k(s).
\end{equation}

\noindent Here the sum is over all singularities of $\psi$. Note that only $1$-pronged singularities lend a positive contribution to the above sum. For technical reasons, these singularities only occur at punctures of $S$.

We refer to $\calF^s$ and $\calF^u$ as the \textit{stable} and \textit{unstable foliations} of $\psi$, respectively. This naming convention comes from the action of $\psi$ on the space $\mathcal{PMF}$ of projective measured foliations of $S$: the classes of $\calF^s$ and $\calF^u$ are the two fixed points of $\psi$, with the former being the unique sink and the latter the unique source. See \cite{Th3} for more.

Suppose that $\psi$ is orientation-preserving. Away from its singularities, one may think of $\psi$ as acting in local coordinates by an affine map having derivative

\[
D\psi = \begin{pmatrix}
\lambda & 0 \\
0 & \lambda^{-1}
\end{pmatrix}.
\]

\noindent Here the first coordinate is in the direction of $\calF^u$, and the second in the direction of $\calF^s$. The number $\lambda > 1$, called the \textit{stretch factor} or \textit{dilatation} of $\psi$, is a fundamental invariant. As it turns out, $\lambda$ is an algebraic integer of degree at most $6g-6+2p$, and the topological entropy of $\psi$ is $\h(\psi) = \log{\lambda}$. 

Any pseudo-Anosov $\psi$ may be reconstructed, along with its invariant foliations, by investigating its action on particular graphs embedded in $S$, called \textit{train tracks}.

\begin{defn}
A \textit{train track} is a graph $\tau$ smoothly embedded in $S$ satisfying the following properties:

\begin{enumerate}
\item For each vertex $v$ of $\tau$, each smooth path $p: [-1,1] \to \tau$ with $p(0)=v$ has the same tangent line at $v$. This tangency condition creates cusps between adjacent incoming edges with the same unit tangent vector at $v$.
\item Each connected component of $S \setminus \tau$ is either
\begin{enumerate}
\item \label{track_comp1} an un-punctured disc with at least three cusps on its boundary, or
\item \label{track_comp2} a once-punctured disc with at least one cusp on its boundary.
\end{enumerate}
\end{enumerate}
\end{defn}

If the image $\psi(\tau)$ is homotopic to $\tau$ in a restricted sense, then we say $\tau$ is an \textit{invariant train track} for $\psi$, and by concatenating $\psi$ with the homotopy we obtain a graph map $f: \tau \to \tau$. We call $f$ a \textit{train track map}, and it can be used to reconstruct $\psi$ along with the foliations $\calF^s$ and $\calF^u$. Indeed, the edges of $\tau$ form a 1-dimensional Markov partition for $f$. The subset of \textit{expanding edges} of $\tau$, meaning those edges $e$ such that $f^n(e) = \tau$ for large $n$, produce a 2-dimensional Markov partition for $\psi$ consisting of rectangles. See \cite{BH} for more.

\begin{rmk}
If $\tau$ is an invariant train track for $\psi$, then the complementary components of $S \setminus \tau$ determine the singularity data of $\psi$. Explicitly, $\psi$ has exactly one $k$-prong singularity for each component with $k$ cusps. Components of type \ref{track_comp1} correspond to unpunctured singularities, while components of type \ref{track_comp2} correspond to punctured singularities.
\end{rmk}

The dynamics of $f: \tau \to \tau$ thus determine the dynamics of $\psi: S \to S$, up to a null set. We are interested in producing pseudo-Anosovs $\psi$ from suitable graph maps $f$, and relating invariants from the two different regimes. 

In \cite{dC} Andr\'e de Carvalho developed a way of ``thickening" a PCF interval map $f$ to create a two-dimensional system, called a \textit{generalized pseudo-Anosov}. Via this thickening procedure, one finds a minimal smooth branched 1-submanifold $\tau \subseteq S^2$, called a \textit{generalized invariant train track}, which is invariant under the thickened map, up to pseudo-isotopy. When $\tau$ is a finite graph, the thickening procedure outputs a pseudo-Anosov $\psi_f: S^2 \to S^2$. In this case, $\tau$ is a train track in the classical sense, and can be thought of as the starting interval $I$ decorated with loops at each point of the postcritical set of $f$. The side of $I$ on which each loop lies is meaningful, and lends a combinatorial flavor to the question of when this $\tau$ could possibly be finite. Importantly, the \textit{train track map} on $\tau$ representing $\psi_f$ is essentially $f$, so the dynamics of $f$ determine the dynamics of $\psi_f$.

In Section 3 of \cite{F} the author showed that there is essentially only one way to thicken a zig-zag map $f: I \to I$ that could possibly result in a finite $\tau$, i.e. that could produce a pseudo-Anosov. If this procedure does produce a pseudo-Anosov from $f$, then we say that $f$ is of \textit{pseudo-Anosov type}, and we denote the pseudo-Anosov by $\psi_f$.

\begin{defn}\label{defn:PAB}
Fix $m \geq 2$. We define $\PA(m)$ to be the set of zig-zags $f$ of pseudo-Anosov type such that

\begin{enumerate}
\item $f$ has $m$ critical points, and
\item $|\PC(f)| \geq 4$.
\end{enumerate}

\noindent We also define $\Pi(m) = \{\psi_f : f \in \PA(m)\}$.
\end{defn}

\begin{rmk}
Maps $f \in \PA(m)$ are distinguished by their slopes $\lambda$, hence by their topological entropy $\log{\lambda}$. Since $\psi_f$ has the same entropy as $f$, it follows that the map $f \mapsto \psi_f$ is a bijection from $\PA(m)$ onto $\Pi(m)$.
\end{rmk}


\subsection{The permutations $\rho_m(n,k)$ and the map $\Phi$}\label{subsec:perm}

This subsection follows closely the discussion in Section 6 of \cite{F} We will define the permutations $\rho_m(n,k)$, which describe precisely the permutation types of the maps $f \in \PA(m)$ (cf. Theorem \ref{thm:PermType} below).

\begin{defn}
Suppose $f$ is a zig-zag map for which the orbit of $x=1$ is periodic of minimal period $n$. Order this orbit in increasing order: $x_1<x_2<\cdots<x_n=1$. The \textit{permutation type} of $f$ is then the permutation $\rho(f) \in \frakS_n$ such that $f(x_i) = x_{\rho(f)(i)}$.
\end{defn}

\begin{rmk}
If $f$ is a zig-zag of pseudo-Anosov type, then there is a bijective correspondence between $\PC(f)$ and the $1$-prong singularities of the invariant foliations of the pseudo-Anosov $\psi_f$. Since $\psi_f$ permutes these singularities, $f$ necessarily permutes the elements of $\PC(f)$, and hence $x=1$ is periodic.
\end{rmk}

We will now describe the permutation types of $f \in \PA(m)$. These have slightly different forms, depending on the following three cases for $m$:

\begin{enumerate}
\item $m \geq 4$ even, in which case $\rho(f) = \rho_e(n,k)$ for some $n$ and $k$,
\item $m \geq 3$ odd, in which case $\rho(f) = \rho_o(n,k)$, or
\item $m=2$, in which case $\rho(f) = \rho_2(n,k)$.
\end{enumerate}

\noindent See Theorem \ref{thm:PermType} below.

\begin{defn}
Given $n \geq 3$ and $2 \leq k \leq n-1$, define $\rho_e(n,k) \in \frakS_n$ to be the permutation such that

\[
\rho_e(n,k)(i)=
\begin{cases}
n & \text{if $i=1$}\\
i+(n-k) & \text{if $2 \leq i \leq k-1$}\\
i-(k-1) & \text{if $k \leq i \leq n$}
\end{cases}
\]

\noindent When $k=2$ we interpret this definition to mean

\begin{align*}
\rho_e(n,2)(i) & = i-1 \pmod{n}\\
& = i+(n-1) \pmod{n}.
\end{align*}

\end{defn}

\begin{rmk}\label{rmk:Transitive}
Here is an observation that will be important for the proof of Theorem \ref{mainthm:RotationNumber}. After deleting the symbol $1$ from the cycle decomposition of $\rho_e(n,k)$, one obtains a new permutation $\tilde{\rho} \in \frakS_{n-1}$. Shifting all labels down by $1$, one sees that this permutation is the rotation by $n-k$ modulo $n-1$:

\[
\tilde{\rho}(i) = i+(n-k) \pmod{n-1}
\]

\noindent In particular, we see that

\[
\text{$\rho_e(n,k)$ is an $n$-cycle $\iff \tilde{\rho}$ is an $(n-1)$-cycle $\iff \gcd(n-k, n-1)=1$.}
\]

\noindent This is the motivation behind the definition of the map $\Phi: \PA(m) \to \bbQ \cap (0,1)$ (cf. Definition \ref{defn:Rotation} below).
\end{rmk}

\begin{defn}
Given $n \geq 3$ and $2 \leq k \leq n-1$, define $\rho_o(n,k) \in \frakS_{n+1}$ to be the permutation such that

\[
\rho_o(n,k)(i)=\begin{cases}
n & \text{if $i=0$}\\
0 & \text{if $i=1$}\\
i+(n-k) & \text{if $2 \leq i \leq k-1$}\\
i-(k-1) & \text{if $k \leq i \leq n$}
\end{cases}
\]

\noindent In other words, $\rho_o(n,k)$ is the permutation on $n+1$ letters obtained by inserting the symbol $0$ in between $1$ and $n$ in the orbit of $n$ under the permutation $\rho_e(n,k)$.
\end{defn}

\begin{defn}
Let $n \geq 3$. For $2 \leq k \leq n-1$, set 

\[
\kappa(n,k) = (1, 2, \ldots, k-1)(k) \cdots (n) \in \frakS_n
\]

\noindent and define

\[
\rho_2(n,k) = [\kappa(n,k)]^{-1} \circ \rho_e(n,k) \circ \kappa(n,k).
\]
\end{defn}

We write $\rho_m(n,k)$ when $m$ is not specified: thus,

\[
\rho_m(n,k) = \begin{cases}
\rho_e(n,k) & \text{if $m \geq 4$ even}\\
\rho_o(n,k) & \text{if $m \geq 3$ odd}\\
\rho_2(n,k) & \text{if $m=2$}.
\end{cases}
\]

It follows from Remark \ref{rmk:Transitive} that for any $m \geq 2$,

\[
\text{$\rho_m(n,k)$ is a transitive permutation $\iff \gcd(n-k, n-1)=1$.}
\]

The following theorem summarizes Sections 5 and 6 of \cite{F} 

\begin{thm}\label{thm:PermType}
Let $f: I \to I$ be a $\lambda$-zig-zag map with $\floor{\lambda}=m \geq 2$. If $f \in \PA(m)$, then $\rho(f)=\rho_m(n,k)$ for some $n, k$ satisfying

\[
\gcd(n-k, n-1) = 1.
\]

\noindent Moreover, for each such $\rho_m(n, k)$ there is a unique $f \in \PA(m)$ such that $\rho(f)=\rho_m(n,k)$.
\end{thm}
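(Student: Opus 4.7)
The plan is to read off the permutation $\rho(f)$ directly from the way the periodic orbit of $x=1$ interlaces with the critical partition of a zig-zag map. Fix $f \in \PA(m)$, let $\calO = \{x_1 < x_2 < \cdots < x_n = 1\}$ be the orbit of $1$, and recall that the critical points sit at $k_i = i/\lambda$ with $f$ alternating in slope sign on the intervals $[k_{i-1}, k_i]$ and taking critical values only in $\{0,1\}$. Because $\PC(f)$ is permuted by $f$ and $x=1$ is periodic (as noted in the remark preceding the theorem), every orbit point $x_i$ lies in exactly one critical interval, and its image is determined by a single affine formula once its location is known.

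First I would extract a distinguished index $k$ as the smallest index such that $x_k$ lies in the rightmost branch $[k_{m-1}, 1]$. A careful branch-by-branch computation, exploiting the fact that the critical values must lie in $\PC(f)$ and that $f(1) \in \calO$, shows that the positions of the orbit points in the critical partition are rigidly determined by $(n,k)$, and that the resulting permutation matches one of the three templates. When $m$ is even the rightmost branch is orientation-reversing with $f(1)=0$ already in $\calO$, and the combinatorics collapse to $\rho_e(n,k)$. When $m$ is odd the rightmost branch has the opposite orientation, $0$ has to be adjoined as an extra label forced by the thickening procedure of Section 3 of \cite{F}, and one recovers $\rho_o(n,k)$. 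The case $m=2$ is a small-combinatorics outlier whose orbit order differs from the $m \geq 4$ even case by the conjugating cycle $\kappa(n,k)$, producing $\rho_2(n,k)$.

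The constraint $\gcd(n-k,n-1)=1$ falls out of Remark \ref{rmk:Transitive}: minimality of the period forces $\rho(f)$ to be a full cycle, and after deleting the label at $x_1$ the induced permutation on the remaining orbit points is rotation by $n-k$ modulo $n-1$, which is transitive precisely under this gcd condition. The only other possibility is when $m$ is odd and the extra fixed label $0$ is appended, but this does not affect transitivity of the underlying cycle.

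For uniqueness, given $(n,k)$ satisfying the gcd condition, I would reconstruct $f$ by noting that $\rho_m(n,k)$ together with the zig-zag piecewise-linear template determines a unique kneading sequence; this yields a transfer matrix whose Perron root recovers $\lambda$, after which the linear pieces of $f$ are completely forced. The main obstacle I expect is not the formulas themselves but the admissibility check: one must verify that every candidate $(n,k)$ passing the gcd test actually produces a map whose thickening is a finite invariant train track rather than a degenerate one. This is where the detailed analysis of Sections 5 and 6 of \cite{F} enters, and is the step where bookkeeping on which side of $I$ each postcritical loop sits becomes essential.
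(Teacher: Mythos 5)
This theorem is \emph{imported}, not proved, in the present paper: the sentence immediately preceding it reads ``The following theorem summarizes Sections 5 and 6 of \cite{F},'' so there is no in-paper proof to compare your sketch against. That said, your outline contains concrete errors that would derail a proof even as a high-level plan, and it openly defers the genuinely hard step.

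The factual error: you assert that for $m$ even ``the rightmost branch is orientation-reversing with $f(1)=0$ already in $\calO$,'' and that for $m$ odd ``the rightmost branch has the opposite orientation.'' Both claims contradict Example \ref{ex:sign} in the paper, which shows $E(m)=+1$ for \emph{every} $m$; the rightmost branch of a zig-zag in $\PA(m)$ is always increasing. Likewise $f(1)=\lambda-m\in(0,1)$, not $0$. The branch whose orientation actually flips with the parity of $m$ is the \emph{leftmost} one, $E(0)=(-1)^{m}$, which is equivalent to $f(0)=0$ for $m$ even versus $f(0)=1$ for $m$ odd. That distinction, and not the slope on $[k_{m-1},1]$, is the reason the extra label $0$ must be adjoined in $\rho_o(n,k)$: when $m$ is odd, $f(k_1)=0$ and $f(0)=1$ place $0$ inside the orbit of $x=1$, whereas for $m$ even $f(0)=0$ makes $0$ a fixed point disjoint from that orbit. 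Your orientation-based explanation does not produce this dichotomy.

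Beyond that, your ``uniqueness'' and ``existence'' discussion is where the real content of the theorem lives, and you explicitly hand it back to \cite{F}: ``This is where the detailed analysis of Sections 5 and 6 of \cite{F} enters.'' But that analysis — determining from the permutation template the side of $I$ on which each infinitesimal loop of the thickened train track must lie, and then verifying that the resulting track closes up to a finite invariant one for precisely the $(n,k)$ with $\gcd(n-k,n-1)=1$ — \emph{is} the proof of the ``Moreover'' clause. A sketch that identifies the right permutation template, gets some of the branch combinatorics wrong, and then waves at the admissibility argument is not yet a proof; it is a summary of what the theorem is about. The transitivity/gcd step you lift correctly from Remark \ref{rmk:Transitive}, and the idea of reconstructing $\lambda$ as the Perron root of the transfer matrix forced by $\rho_m(n,k)$ is sound in principle, but without the loop-placement and finiteness argument the converse direction is missing entirely.
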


We introduce the notation $\calF = \bbQ \cap (0,1)$.

\begin{defn}\label{defn:Rotation}
Fix $m \geq 2$. Define $\Phi: \PA(m) \to \calF$ to be the map

\[
\text{$\Phi(f) = \frac{n-k}{n-1}$ \ if $\rho(f)=\rho_m(n,k)$}.
\]

\noindent Note that $\Phi$ always outputs fractions in lowest terms.
\end{defn}

We are now in a position to state the following fundamental fact, which appeared as Theorem 2 in \cite{F}.

\begin{fundlem}\label{fundlem:Rational}
For each $m \geq 2$ the map $\Phi: \PA(m) \to \calF$ is a bijection.
\end{fundlem}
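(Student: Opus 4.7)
The plan is to reduce the lemma to a purely combinatorial bijection using Theorem \ref{thm:PermType}. That theorem identifies $\PA(m)$ with the set
\[
\calP := \{(n, k) \in \bbZ^2 : n \geq 3, \ 2 \leq k \leq n - 1, \ \gcd(n-k, n-1) = 1\}
\]
via $f \mapsto (n, k)$, where $\rho(f) = \rho_m(n, k)$. This identification is itself a bijection: each pair $(n, k) \in \calP$ yields a unique $f$ by the second statement of Theorem \ref{thm:PermType}, and conversely, the pair $(n, k)$ is recoverable from $\rho_m(n, k)$, since the size of the underlying permutation determines $n$, and given $n$, the unique index at which the ``wrap-around'' occurs in any of the three definitions of $\rho_m(n,k)$ determines $k$.

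Under this identification, $\Phi$ becomes the map $\tilde{\Phi} \colon \calP \to \calF$ defined by $\tilde{\Phi}(n, k) = (n - k)/(n - 1)$, and the lemma reduces to proving $\tilde{\Phi}$ is a bijection. For injectivity, I will use the coprimality condition $\gcd(n - k, n - 1) = 1$ built into $\calP$: this ensures $(n-k)/(n-1)$ is in lowest terms, so $\tilde{\Phi}(n, k)$ uniquely determines both numerator $n - k$ and denominator $n - 1$, and hence $(n, k)$. For surjectivity, given any $a/b \in \calF$ written in lowest terms (so $\gcd(a,b) = 1$ with $1 \leq a \leq b - 1$, forcing $b \geq 2$), I will set $n := b + 1$ and $k := b + 1 - a$ and verify the three defining conditions of $\calP$: $n \geq 3$ since $b \geq 2$; $2 \leq k \leq n - 1$ since $1 \leq a \leq b - 1$; and $\gcd(n - k, n - 1) = \gcd(a, b) = 1$. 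Then $\tilde{\Phi}(n, k) = a/b$.

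The proof is essentially bookkeeping once Theorem \ref{thm:PermType} is in hand; there is no real obstacle. The genuine difficulty in parameterizing $\PA(m)$ — namely, showing that the permutation types $\rho_m(n, k)$ with $\gcd(n-k, n-1) = 1$ are exactly those realized by zig-zag maps of pseudo-Anosov type with $m$ critical points and $|\PC(f)| \geq 4$ — has already been carried out in the cited theorem. The virtue of the present reformulation is that it repackages that combinatorial data as a single rational number, which, as Theorem \ref{mainthm:RotationNumber} will show, carries a natural geometric meaning as a rotation number at the fixed singularity $\infty$.
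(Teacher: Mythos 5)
Your proof is correct and is the derivation the paper intends. The present paper cites this lemma from [F] rather than reproving it, but Theorem \ref{thm:PermType} and Definition \ref{defn:Rotation} are stated immediately beforehand precisely so that the lemma reduces to the elementary arithmetic bijection $(n,k) \leftrightarrow (n-k)/(n-1)$ on pairs with $\gcd(n-k, n-1) = 1$, which is exactly what you check.
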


A central goal of the present paper is to argue that $\PA(m)$ in fact respects the combinatorial structure of a special graph $\calF$ whose vertex set is $\calF$. We call this graph the \textit{Farey tree}. See Section \ref{sec:Farey} for more.

\subsection{The digit polynomial $D_f$}\label{subsection:Digit}

Let $f \in \PA(m)$ for $m \geq 2$ even, and write $\Phi(f)=\frac{a}{b}$. The point $x=1$ is strictly periodic under $f$ of minimal period $b+1$. Denote the elements of this orbit by $x_i=f^i(1)$ for $i=0, \ldots, b+1$. Since $f$ is a $\lambda$-zig-zag map, the $x_i$ satisfy a collection of linear relations:

\begin{equation*}
x_i = a_i \pm \lambda x_{i-1}, \hspace{1cm} \text{for $i=1, \ldots, b+1$}
\end{equation*}

\noindent where the indices are taken modulo $b+1$ and each $a_i$ is an integer satisfying $|a_i| \leq m$. In fact, in Section 7 of \cite{F} the author proves the further restrictions

\begin{equation}\label{eqn:recur1}
x_i = \begin{cases}
\lambda x_{i-1} - m \ \text{or}\ \lambda x_{i-1} - (m-2) & \text{if $1 \leq i \leq b-1$}\\
m - \lambda x_{i-1} & \text{if $i=b$}\\
2 - \lambda x_{i-1} & \text{if $i=b+1$.}
\end{cases}
\end{equation}

Since $x_0=x_{b+1}=1$, composing these relations and solving for $0$ produces a monic integral polynomial relation in $\lambda$, of the form

\begin{equation}\label{eqn:digitpoly}
0=D_f(\lambda) = \lambda^{b+1} +1 -\sum_{i=1}^b c_i \lambda^{b+1-i}.
\end{equation}

\noindent One checks that the $c_i$ are integers satisfying

\begin{equation}\label{eqn:coefrelations}
c_i=\begin{cases}
\lambda x_{i-1} - x_i & \text{if $1 \leq i \leq b-1$}\\
\lambda x_{i-1} + x_i & \text{if $i=b$}
\end{cases}.
\end{equation}

If instead $m$ is odd and $\Phi(f)=\frac{a}{b}$ then $x=1$ is periodic of minimal period $b+2$. With notation as before, the corresponding restrictions are similar:

\begin{equation}\label{eqn:recur2}
x_i=\begin{cases}
\lambda x_{i-1} - m \ \text{or} \ \lambda x_{i-1} - (m-2) & \text{if $1 \leq i \leq b-1$}\\
m - \lambda x_{i-1} & \text{if $i=b$}\\
\lambda x_{i-1} -1 & \text{if $i=b+1$}\\
1-\lambda x_{i-1} & \text{if $i=b+2$.}
\end{cases}
\end{equation}

\noindent Composing these relations and solving for $0$ produces the equation

\[
0=\lambda D_f(\lambda),
\]

\noindent where $D_f(\lambda)$ has the same form as in (\ref{eqn:digitpoly}) and the $c_i$ satisfy (\ref{eqn:coefrelations}).

In either case, we make the following definition.

\begin{defn}\label{defn:DigitPoly}
The polynomial $D_f(t) \in \bbZ[t]$ is the \textit{digit polynomial} of $f$.
\end{defn}

By definition, $\lambda$ is a root of $D_f$. Less obvious is the fact that $D_f$ is the characteristic polynomial of a matrix naturally associated to $f$, called the \textit{strong Markov matrix}. This fact is crucial to the proof of Theorem \ref{mainthm:poly_invts}. See Sections \ref{subsec:Markov} and \ref{subsec:zeta}.

\subsection{A little kneading theory}\label{subsection:Knead}

Let us recall a few basic definitions from kneading theory. For the moment, let $f: I \to I$ be any PCF piecewise monotone map with finitely many critical points $k_1, \ldots, k_m$. We introduce the notation

\[
\text{$I_0=[0,k_1)$, \ $I_m=(k_m,1]$, and $I_j=(k_j,k_{j+1})$ for $j=1, \ldots, m-1$.}
\]

\noindent These are the intervals of monotonicity for $f$. Put $\calA=\calA_m = \{0, k_1, 1, k_2, 2, \ldots, k_m, m\}$, an alphabet on $2m+1$ letters.

\begin{defn}
For any $x \in I$, the \textit{address} of $x$ is

\[
A(x) = \begin{cases}
j & \text{if $x \in I_j$}\\
k_j & \text{if $x=k_j$}
\end{cases}
\]

\noindent The \textit{itinerary} of $x$ is the sequence

\[
\It_f(x) = (A(x), A(f(x)), A(f^2(x)), \ldots).
\]

\noindent Given a critical point $k_j$, its image $f(k_j)$ is a \textit{critical value}. The \textit{kneading sequences} of $f$ are the itineraries of these critical values:

\[
\text{$\calK_j(f) = \It_f(f(k_j))$ \ for $j=1, \ldots, m$.} 
\]

\noindent The \textit{kneading data} of $f$ is the vector

\[
\calK_f = (\calK_1(f), \ldots, \calK_m(f)).
\]
\end{defn}

\begin{rmk}\label{rmk:kneaddata}
In the case that $f \in \PA(m)$, the kneading data $\calK_f$ takes a very particular form. From Corollary 5.6 and Definition 5.8 of \cite{F} we know that $f(0)=0$ if $m$ is even and $f(0)=1$ if $m$ is odd. Since the only critical values of $f$ are $0$ and $1$ we can say even more: if $m$ is even, then

\[
\calK_j(f) = \begin{cases}
\It_f(1) & \text{if $j$ is odd}\\
0^\infty & \text{if $j$ is even,}
\end{cases}
\]

\noindent while if $m$ is odd we have

\[
\calK_j(f)=\begin{cases}
0 \cdot \It_f(1) & \text{if $j$ is odd}\\
\It_f(1) & \text{if $j$ is even.}
\end{cases}
\]

In each case $\calK_f$ is completely determined by $\It_f(1)$, which is strictly periodic since $f \in \PA(m)$. For this reason, we make the following definition.

\end{rmk}

The \textit{$l$-th prefix} of a word $A=(A_0, A_1, \ldots)$ of length at least $l$ is the subword 

\[
\Pre_l(A) = (A_0, \ldots, A_{l-1}).
\]

\begin{defn}\label{def:prin}
Let $f \in \PA(m)$ and write $\Phi(f)=\frac{a}{b}$. The \textit{principal kneading sequence} $\nu(f)$ of $f$ is the first period of $\It_f(1)$:

\[
\nu(f) = \Pre_{b(m)}(\It_f(1)),
\]

\noindent where

\[
b(m) = \text{the minimal period of $1$ under $f$} = \begin{cases}
b+1 & \text{if $m$ is even}\\
b+2 & \text{if $m$ is odd.}
\end{cases}
\]

\noindent As with $\It_f(1)$, the sequence $\nu(f)$ is 0-indexed: for $0 \leq i \leq b(m)$,

\[
\nu_i(f) = (\It_f(1))_i = A(f^i(x)).
\]
\end{defn}

\bigskip

The next proposition is immediate from Equations (\ref{eqn:recur1}) and (\ref{eqn:recur2}), and relates the coefficients of $D_f$ to the entries of $\nu(f)$.

\begin{prop}\label{prop:knead}
Let $f \in \PA(m)$ with digit polynomial

\[
D_f(t) = t^{b+1}+1 - \sum_{i=1}^b c_it^{b+1-i}.
\]

\noindent Then the principal kneading sequence of $f$ is given by

\begin{equation}
\nu_i(f) = \begin{cases}
c_{i+1} & \text{if $0 \leq i \leq b-2$}\\
m-1 & \text{if $i=b-1$}\\
k_1 & \text{if $i=b$}\\
0 & \text{if $m$ is odd and $i=b+1$.}
\end{cases}
\end{equation}

\end{prop}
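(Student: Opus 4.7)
The plan is to read the address $A(x_i)$ directly off the integer constants appearing in the recurrences (\ref{eqn:recur1}) and (\ref{eqn:recur2}), via an explicit branch-to-address dictionary.

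First I would write $f$ out on each interval of monotonicity $I_j$. When $m$ is even, $f$ is a positive zig-zag, so on $I_j$ the map is $f(x) = \lambda x - j$ if $j$ is even and $f(x) = -\lambda x + (j+1)$ if $j$ is odd; when $m$ is odd, $f$ is a negative zig-zag and the two cases swap. In either case one reads off the following: if $x$ lies on a positive-slope branch, then $A(x)$ is the unique integer $d$ with $f(x) = \lambda x - d$; if $x$ lies on a negative-slope branch, then $A(x) = d - 1$ where $f(x) = -\lambda x + d$.

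Next I would apply this dictionary to the constrained recurrences. For $1 \leq i \leq b-1$, both (\ref{eqn:recur1}) and (\ref{eqn:recur2}) give $x_i = \lambda x_{i-1} - c_i$ with $c_i \in \{m-2, m\}$, so $x_{i-1}$ lies on a positive-slope branch with $A(x_{i-1}) = c_i$. The parity check here is essential but immediate: both $m-2$ and $m$ share the parity of $m$, which is exactly the parity of the branches on which $f$ has slope $+\lambda$ for each zig-zag type. Reindexing yields $\nu_i(f) = A(x_i) = c_{i+1}$ for $0 \leq i \leq b-2$. For $i = b-1$ the relevant relation is $x_b = m - \lambda x_{b-1}$, a negative-slope branch with constant $m = A(x_{b-1}) + 1$, hence $\nu_{b-1}(f) = m-1$.

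Finally for $i \geq b$ I would use the tail of the recurrences together with periodicity $x_{b(m)} = 1$. When $m$ is even, $x_{b+1} = 2 - \lambda x_b$ combined with $x_{b+1} = 1$ forces $x_b = 1/\lambda = k_1$. When $m$ is odd, $x_{b+2} = 1 - \lambda x_{b+1}$ together with $x_{b+2} = 1$ forces $x_{b+1} = 0$, which plugged into $x_{b+1} = \lambda x_b - 1$ again gives $x_b = 1/\lambda = k_1$. In both cases $\nu_b(f) = k_1$, and in the odd case $\nu_{b+1}(f) = A(0) = 0$. I do not anticipate a genuine obstacle: once the branch-to-address dictionary is in place, the proof is essentially bookkeeping, with the only subtlety being the parity check that aligns the allowed constants $\{m-2, m\}$ with the positive-slope branches of the appropriate zig-zag.
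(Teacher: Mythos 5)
Your proof is correct and takes the same approach as the paper, which simply asserts the proposition is ``immediate from Equations (\ref{eqn:recur1}) and (\ref{eqn:recur2})''; your branch-to-address dictionary and parity check make that immediacy explicit, and the endpoint cases $i=b$ (and $i=b+1$ for odd $m$) are handled correctly via periodicity.
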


The next statement shows how to compute the coefficients of $D_f$ independently of $\nu(f)$. It is Theorem 3 in \cite{F}, and will be fundamental to our work in Sections \ref{sec:model} and \ref{sec:mono}.

\begin{fundlem}\label{fundlem:Digit}
Let $f \in \PA(m)$ with $\Phi(f)=\frac{a}{b}$. Define $L: [0,b] \to \bbR$ by $L(t)=\frac{a}{b} \cdot t$. Then

\[
D_f(t)=t^{b+1}+1-\sum_{i=1}^b c_it^{b+1-i},
\]

\noindent where the $c_i$ satisfy

\begin{equation}
c_i=\begin{cases}
m & \text{if $L(t) \in \bbN$ for some $t \in [i-1,i]$}\\
m-2 & \text{otherwise.}
\end{cases}
\end{equation} 

\noindent In particular, $c_i=c_{b-i}$, so $D_f$ is reciprocal: that is,

\[
D_f(t)=t^{b+1} D_f(t^{-1}).
\]
\end{fundlem}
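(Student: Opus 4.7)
The plan is to pin down each coefficient $c_i$ by tracking where the orbit point $x_{i-1}$ lies within the monotonicity partition of $f$, and then to convert that dynamical datum into the arithmetic condition on the line $L$.

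First, combining Proposition \ref{prop:knead} with the recursions (\ref{eqn:recur1}) and (\ref{eqn:recur2}) and the coefficient relations (\ref{eqn:coefrelations}), one reads off that for $1 \leq i \leq b-1$ the coefficient $c_i$ equals $m$ precisely when $x_{i-1} \in I_m$ and equals $m-2$ precisely when $x_{i-1} \in I_{m-2}$. The boundary coefficient $c_b$ (and, when $m$ is odd, the analogous terminal coefficient) is forced by the last case of the recursion; one checks directly that it equals $m$, consistent with the fact that $L(b) = a \in \bbN$ already puts an integer crossing in $[b-1, b]$. Thus the task reduces to detecting, purely from the index $i$, whether $x_{i-1}$ lies in the topmost slope-$(+\lambda)$ interval $I_m$.

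Next, I would invoke Remark \ref{rmk:Transitive}: after removing the maximum $x_0 = 1$, the induced permutation on the sorted ranks of $\{x_1, \ldots, x_b\}$ is rotation by $a$ modulo $b$. Tracing the initial image under $\rho_m(n,k)$ then yields the closed-form rank $\sigma(i) \equiv ia \pmod{b}$ (in the shifted labeling $\{0, 1, \ldots, b-1\}$) for the rank of $x_i$ in the sorted orbit. The crucial claim is that $x_{i-1} \in I_m$ if and only if $\sigma(i-1) \geq b - a$, i.e., $x_{i-1}$ occupies one of the top $a$ ranks. The correct count is forced by Remark \ref{rmk:kneaddata}, since the letter $m$ appears in $\nu(f)$ exactly $a$ times; the ordering portion follows from the fact that, among the slope-$(+\lambda)$ branches of the zig-zag, the branch on $I_m$ is the rightmost, so orbit points in $I_m$ necessarily occupy the largest sorted ranks.

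Finally, translating the rank condition into the geometry of $L$ is pure arithmetic: $\sigma(i-1) \geq b - a$ is equivalent to $((i-1)a \bmod b) + a \geq b$, which is in turn equivalent to $\lfloor ia/b \rfloor > \lfloor (i-1)a/b \rfloor$, precisely the condition that $L(t) = (a/b)t$ attains an integer value somewhere on $[i-1, i]$. The reciprocal property follows immediately from the involution $t \mapsto b - t$, which preserves the integer-value locus of $L$ (since $L(t) + L(b-t) = a \in \bbN$) and swaps $[i-1, i]$ with $[b-i, b-i+1]$, so the defining conditions for the coefficients at these symmetric index positions coincide. I expect the main obstacle to be the middle step: rigorously showing that the top $a$ sorted orbit points are \emph{exactly} those in $I_m$ requires carefully unwinding the zig-zag sign pattern and using $\rho(f) = \rho_m(n,k)$ to exclude orbit excursions into any intermediate interval $I_j$ with $0 < j < m-2$, apart from the single pre-terminal visit to $I_{m-1}$ (corresponding to $x_{b-1}$) that is already accounted for by Proposition \ref{prop:knead}.
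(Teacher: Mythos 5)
The paper does not actually prove this lemma; it is imported verbatim as Theorem~3 of \cite{F}, so there is no in-text argument to compare against. Judged on its own merits, your route is the natural one — reduce $c_i$ to the address of $x_{i-1}$, use the cyclic structure from Remark~\ref{rmk:Transitive} to give a closed-form rank $\sigma(i-1)\equiv (i-1)a\pmod b$, then translate the rank threshold into the floor-jump criterion for the line $L$ — and the reciprocity argument via $t\mapsto b-t$ is fine (note the symmetry you actually obtain is $c_i=c_{b+1-i}$, which is what reciprocity of a degree $b+1$ polynomial of this shape requires; the ``$c_i=c_{b-i}$'' in the statement looks like a typo).

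There is, however, a concrete gap at the index $i=1$. You correctly dispose of $c_b$ as a boundary case, but $c_1$ needs the same treatment and does not receive it. Your chain requires ``$x_{i-1}\in I_m$ iff $\sigma(i-1)\ge b-a$ iff $\lfloor ia/b\rfloor>\lfloor (i-1)a/b\rfloor$ iff $L$ hits an integer on $[i-1,i]$.'' For $i=1$ the last equivalence fails: $L(0)=0\in\bbN$ lies in $[0,1]$, so the statement's condition gives $c_1=m$ (as it must, since $c_1=\nu_0(f)=A(1)=m$), yet $\lfloor a/b\rfloor=0=\lfloor 0\rfloor$, so the floor-jump test is false. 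Worse, $\sigma(0)$ is not even defined in your setup, because $x_0$ is precisely the point you removed before forming the rotation. The fix is easy — declare $c_1=m$ outright from $x_0=1\in I_m$, paralleling your handling of $c_b$, and restrict the rank/floor argument to $2\le i\le b-1$ — but as written the proof is incomplete for that coefficient. Relatedly, the phrase ``the top $a$ sorted orbit points are exactly those in $I_m$'' is slightly off: once $x_0=1$ is removed, only $a-1$ of the remaining orbit points lie in $I_m$, and the top $a$ ranks also sweep in the unique $I_{m-1}$ point. That is $x_{b-1}$, so the iff survives for $2\le i\le b-1$ only because you have already peeled off $i=b$ as a boundary case; this deserves to be said explicitly rather than left to the reader.
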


The previous two statements imply the following simple description of the principle kneading sequence. We preserve notation.

\begin{cor}\label{cor:knead}
The principal kneading sequence of $f \in \PA(m)$ satisfies

\begin{equation}
\nu_i(f) = \begin{cases}
m & \text{if $0 \leq i \leq b-2$ and $L(t) \in \bbN$ for some $t \in [i, i+1]$}\\
m-2 & \text{if $0 \leq i \leq b-2$ and $L(t) \not \in \bbN$ for some $t \in [i, i+1]$}\\
m-1 & \text{if $i=b-1$}\\
k_1 & \text{if $i=b$}\\
0 & \text{if $m$ is odd and $i=b+1$.}
\end{cases}
\end{equation}
\end{cor}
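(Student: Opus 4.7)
The corollary is a direct consequence of combining the two results immediately preceding it, so the proof plan is essentially a substitution argument. The key observation is that Proposition \ref{prop:knead} already computes $\nu_i(f)$ for $0 \leq i \leq b-2$ as the coefficient $c_{i+1}$ of the digit polynomial $D_f$, and Fundamental Lemma \ref{fundlem:Digit} characterizes these coefficients in terms of the line $L(t) = (a/b)t$.

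The plan is to split into cases based on $i$. For $0 \leq i \leq b-2$, I would invoke Proposition \ref{prop:knead} to write $\nu_i(f) = c_{i+1}$, and then apply Fundamental Lemma \ref{fundlem:Digit} to $c_{i+1}$: the lemma states that $c_{i+1} = m$ precisely when $L(t) \in \bbN$ for some $t \in [(i+1)-1, i+1] = [i, i+1]$, and equals $m-2$ otherwise. This reindexing is the only substantive manipulation, and it reproduces the first two cases of the corollary exactly. For the remaining values $i = b-1$, $i = b$, and (when $m$ is odd) $i = b+1$, I would simply quote the corresponding cases of Proposition \ref{prop:knead} verbatim, since these are not expressed in terms of the line $L$.

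Since this is essentially a bookkeeping statement, there is no real obstacle; the one point requiring care is the index shift between the convention in Proposition \ref{prop:knead} (where $\nu_i = c_{i+1}$) and the indexing of the interval condition in Fundamental Lemma \ref{fundlem:Digit} (where $c_j$ depends on $t \in [j-1, j]$). I would verify this alignment once to confirm that $j = i+1$ yields the interval $[i, i+1]$ as claimed, and then the corollary follows immediately.
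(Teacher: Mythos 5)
Your proposal is correct and matches the paper's (implicit) reasoning exactly: the corollary is stated as an immediate consequence of Proposition \ref{prop:knead} and Lemma \ref{fundlem:Digit}, and your index-shift check ($j = i+1$ maps $[j-1,j]$ to $[i,i+1]$) is precisely the only thing one needs to verify.
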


As these statements show, the following invariants of $f \in \PA(m)$ determine each other:

\begin{enumerate}
\item the fraction $\Phi(f) = \frac{a}{b}$,
\item the polynomial $D_f(t)$, and
\item the principal kneading sequence $\nu(f)$. 
\end{enumerate}

\section{The proof of Theorem \ref{mainthm:RotationNumber}}\label{sec:rotation}

As we saw in Section \ref{subsec:perm}, the definition of the map $\Phi$ assigning to the interval map $f \in \PA(m)$ a fraction in $\calF = \bbQ \cap (0,1)$ is opaque. In this section, we prove that $\Phi(f)$ has an interpretation from the perspective of the pseudo-Anosov $\psi_f$.

\addtocounter{mainthm}{-3}

\begin{mainthm}
Suppose $f \in \PA(m)$. Then

\[
\Phi(f) = 1- \rot_\infty(\psi_f).
\]
\end{mainthm}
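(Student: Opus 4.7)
The plan is to compute $\rot_\infty(\psi_f)$ directly by understanding the cyclic action of $\psi_f$ on the prongs at the $(p-2)$-prong singularity $\infty$. The key structural input is the explicit description of the train track $\tau_f \subset S_{0, p+1}$ coming from the thickening of $f$.

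First, I would unpack this structure: $\tau_f$ consists of $I$ together with one infinitesimal loop attached at each postcritical point of $f$, with the side of each loop (above or below $I$) determined by the kneading data of $f$. The complementary regions of $\tau_f$ in $S_{0,p+1}$ are one once-punctured disk inside each loop (each contributing a $1$-prong singularity at the corresponding postcritical point) together with one region containing the puncture $\infty$, whose boundary has exactly $p - 2 = n - 1$ cusps.

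Next, I would identify these $n-1$ cusps bordering $\infty$ with a specific subset of $\PC(f)$ of size $n-1$. By analyzing the gate structure at each vertex of $\tau_f$, one sees that each interior postcritical vertex contributes exactly one cusp to the $\infty$-region (on the side of its loop), while each of the two endpoint vertices $x = 0$ and $x = 1$ of $I$ contributes its sole cusp to its inside-loop region and none to $\infty$. This pins down the bijection between prongs at $\infty$ and interior postcritical points. Tracing the boundary of the $\infty$-region --- which zigzags along both sides of $I$, detouring around each loop on the loop's side --- reveals the cyclic order of these cusps going counterclockwise around $\infty$; crucially, this order is the \emph{reverse} of the linear order of the postcritical points along $I$, because going ccw around $\infty$ on the sphere corresponds to cw in the planar picture of $I$ with its loops.

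Once the cyclic order is established, Remark \ref{rmk:Transitive} supplies the key computation: deleting the symbol $1$ (corresponding to the endpoint $x_n = 1$ of $I$) from the cycle decomposition of $\rho_m(n,k)$ yields a permutation on $n-1$ letters that is precisely the rotation by $n-k$ modulo $n-1$ in the direction of \emph{increasing} labels. Because the cyclic order of prongs at $\infty$ reverses this linear order, the induced ccw rotation of prongs at $\infty$ is by $-(n-k) \equiv k-1 \pmod{n-1}$, giving
\[
\rot_\infty(\psi_f) = \frac{k-1}{n-1} = 1 - \frac{n-k}{n-1} = 1 - \Phi(f).
\]

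The main obstacle will be the careful bookkeeping of orientations, gates, and endpoint structure --- in particular verifying that the cyclic order of prongs at $\infty$ is indeed the reverse of the linear order on $I$ rather than matching it, and that the two endpoint vertices really do contribute no cusp to the $\infty$-region. This requires a case analysis for $m \geq 4$ even, $m \geq 3$ odd, and $m = 2$, since each case has a slightly different permutation model ($\rho_e$, $\rho_o$, or $\rho_2$) and slightly different endpoint behavior at $0$ (e.g., $0$ is a fixed point of $f$ when $m$ is even versus $f(0) = 1$ when $m$ is odd). Once the orientation convention is fixed correctly, the final computation reduces mechanically to Remark \ref{rmk:Transitive}.
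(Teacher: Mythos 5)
Your proposal takes a genuinely different route from the paper's proof. The paper does not work directly with the train track $\tau_f$ and its complementary regions; instead it works with the limit set $\Lambda_f \subset I \times \bbR$ of the Galois lift $f_G$. It identifies the preimages of $\infty$ as the $n-1$ attracting fixed points $p_i$ of $f_G^{-(n-1)}$, one per connected component $\widetilde{J}_i$ of the \emph{lower} horizontal boundary $\partial_H^L \Lambda_f$, and then reads off the cyclic order of the corresponding vertical separatrices $s_i$ from the boundary gluing pattern $\widetilde{J}_i^r \leftrightarrow \widetilde{J}_{i+1}^l$. The reversal of cyclic order --- the fact that going counterclockwise around $\infty$ takes $s_i$ to $s_{i-1}$, so that $f_G(p_i) = p_{i+(n-k)}$ requires $k-1$ ccw half-turns --- is a one-liner given that gluing pattern. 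Both your argument and the paper's then reduce to the same arithmetic: $\rot_\infty(\psi_f) = \frac{k-1}{n-1} = 1 - \frac{n-k}{n-1}$, which is essentially Remark~\ref{rmk:Transitive}. So the shape of the argument is the same (locate the prongs, establish reversal, apply the $\tilde\rho$ computation); only the geometric model differs.

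The gap is exactly where you flag it, but it is more specific than ``careful bookkeeping of orientations.'' When you trace $\partial$ of the $\infty$-region in the planar picture of $I$ with loops, that boundary runs along \emph{both} sides of $I$: along the top in one horizontal direction, around the end loop, then along the bottom in the opposite direction. The cusp contributed by a given interior postcritical vertex lies on the side of $I$ where its loop sits. Consequently, the cusps of the above-side loops are encountered in one monotone order and the cusps of the below-side loops in the opposite monotone order, and the resulting cyclic word is a shuffle of two runs, not automatically the reverse of the linear order on $I$. ``Going ccw around $\infty$ is cw in the plane'' is true but does not by itself imply the reversal, because the boundary is not a single pass along one side of $I$. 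To close this, you would need the specific side/direction data for the loops produced by the zig-zag thickening in \cite{F} and check that with that data the shuffle collapses to the reverse linear order; the paper sidesteps this entirely by working on $\partial_H^L \Lambda_f$, where the preimages of $\infty$ all sit on a single one-sided boundary component and the gluing pattern immediately gives the cyclic order. Your case split by the parity of $m$ is also somewhat heavier than the paper's treatment, which does the even case and declares the other two ``completely analogous,'' but that is a matter of exposition rather than substance.
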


Before beginning the proof, we set some notation. 

In \cite{F} the author constructed the \textit{Galois lift} of $f$, following W. Thurston's example in \cite{Th2}. This is the piecewise-affine map $f_G: I \times \bbR \to I \times \bbR$ defined by

\[
f_G(x,y) = \left ( f_j(x), \tilde{f}_j(y) \right ) \ \text{if $x \in I_j$.}
\]

\noindent Here, $f_j(x) = a_j \pm \lambda x$ is the linear map defining $f$ on the subinterval $I_j$. The map $\tilde{f}_j$ is obtained by replacing all instances of $\lambda$ with $\lambda^{-1}$ in the formula for $f_j$. When $f$ is a zig-zag map, the constants $a_j$ are all integers, and so we have $\tilde{f}_j(y) = a_j \pm \lambda^{-1} y$. 

The Galois lift leaves invariant a natural subset $\Lambda_f \subseteq I \times \bbR$, called the \textit{limit set}. This set is a connected, finite union of Euclidean rectangles $R_j$ with horizontal and vertical sides, such that the projection onto the $x$-coordinate of the rectangle $R_j$ is

\[
\pi(R_j) = I_j \subseteq I.
\]

The action of $f_G$ on each rectangle $R_j$ is by piecewise-affine maps, and after properly identifying segments of the boundary $\partial \Lambda_f$ we obtain the action of the pseudo-Anosov $\psi_f$ acting on $S^2$. The rectangles $R_j$ form a Markov partition for $\psi_f$ in the quotient. There is a unique repelling orbit on the horizontal boundary $\partial_H \Lambda_f$, and in the quotient this orbit glues up to the point $\infty$ fixed by $\psi_f$. The minimal period of the orbit is equal to the number of prongs of the invariant foliations of $\psi_f$ at $\infty$.

The proof of Theorem \ref{mainthm:RotationNumber} proceeds as follows. We consider only the case when the number of critical points of $f$ satisfies $m \geq 4$ even. The other two cases are completely analogous.

\begin{enumerate}
\item Show that the orbit on $\partial_H \Lambda_f$ has exactly one element on each of the connected components of the \textit{lower horizontal boundary} $\partial_H^L \Lambda_f$.
\item Investigate the edge identifications of $\partial_H^L \Lambda_f$ in order to determine the counterclockwise ordering of the prongs around $\infty$.
\item Show that $\psi_f$ acts on the prongs by the permutation inverse to $\tilde{\rho}(f)$ (cf. Remark \ref{rmk:Transitive}). 
\end{enumerate}

\begin{proof}
Cut the interval $I=[0,1]$ at the set $\PC(f) \setminus \{k_1\}$ (cf. Definition \ref{defn:postcrit}) to obtain the partition

\[
I =\bigcup_{i=1}^{n-1} J_i.
\] 

\noindent Each Markov rectangle $R_j$ has a horizontal boundary

\[
\partial_H R_j = \partial_H^U R_j \sqcup \partial_H^L R_j,
\]

\noindent where $\partial_H^L R_j$ is the \textit{lower horizontal boundary of $R_j$}, i.e. the connected component of $\partial_H R_j$ that is at a lower height in $\bbR^2$. The \textit{lower horizontal boundary of $\Lambda_f$} is the union

\[
\partial_H^L \Lambda_f = \bigcup_{j=1}^n \partial_H^L R_j.
\]

\noindent From the author's work in Section 4.1 of \cite{F}, the connected components of $\partial_H^L \Lambda_f$ are precisely the sets

\[
\text{$\widetilde{J}_i = \pi^{-1}(J_i) \cap \partial_H^L \Lambda_f$, for $i=1, \ldots, n-1$.}
\]

\noindent For each $i$ the map $f_G^{-1}: \widetilde{J}_i \to f_G^{-1}  ( \widetilde{J}_i )$ is an affine contraction, with scaling factor $\lambda^{-1}$, and in fact is a homeomorphism onto its image. Moreover, we have

\begin{equation}\label{eqn:inverseperm}
f_G^{-1} (\widetilde{J}_i  ) \subseteq \widetilde{J}_{\widetilde{\rho}(f)^{-1}(i)} = \widetilde{J}_{i-(n-k)},
\end{equation}

\noindent where $\Phi(f) = \frac{n-k}{n-1}$ and $\widetilde{\rho}(f)$ is the permutation in Remark \ref{rmk:Transitive}. Here we are interpreting the index $i-(n-k)$ modulo $n-1$.

It follows now that $f_G^{-(n-1)}$ sends each $\widetilde{J}_i$ into itself, implying the existence of an attracting fixed point $p_i \in \widetilde{J}_i$. The collection $\{p_1, \ldots, p_{n-1}\}$ is necessarily the unique repelling orbit of $f_G$ on $\partial_H \Lambda_f$. We have thus located the preimages of the singular point $\infty \in S^2$ under the quotient map that identifies segments of $\partial \Lambda_f$. It remains to understand how the neighborhoods of the points $p_i$ are glued together.

We introduce a new piece of notation. Each $p_i$ is contained in the interior of $\widetilde{J}_i$, since $\pi(\partial \widetilde{J}_i) \subseteq \PC(f)$ and $\pi(p_i) \not \in \PC(f)$. Therefore, cutting $\widetilde{J}_i$ at $p_i$ produces left and right subintervals $\widetilde{J}^l_i$ and $\widetilde{J}^r_i$, respectively.

We claim that for each index $i=1, \ldots, n-2$, the edge identifications glue $\widetilde{J}_i^r$ to $\widetilde{J}_{i+1}^l$ by an orientation-reversing isometry. Once we establish this fact, the conclusion of the theorem will follow. Indeed, the standard orientation on $\bbR^2$ descends to the orientation on $S^2$, and a positive turn around $\infty \in S^2$ lifts to a sequence of counterclockwise half-turns around the points $p_i$. Each such half-turn traverses the vertical line $s_i$ emanating from the corresponding $p_i$, and $s_i$ pushes down to a prong of the contracting foliation of $\psi_f$ at $\infty$. Equation (\ref{eqn:inverseperm}) implies that

\[
f_G(p_i) = p_{i+(n-k)},
\]

\noindent but since $\widetilde{J}_i^r$ is glued to $\widetilde{J}_{i+1}^l$ we require $n-1-(n-k)=k-1$ counterclockwise half-turns to reach $s_{i+(n-k)}$ from $s_i$. It follows that $\psi_f$ rotates the prongs at $\infty$ by $k-1$ positive clicks, and hence

\[
\rot_\infty(\psi_f) = \frac{k-1}{n-1} = 1 - \frac{n-k}{n-1} = 1-\Phi(f).
\]

It remains to show that $\widetilde{J}_i^r$ glues to $\widetilde{J}_{i+1}^l$ by an orientation-reversing isometry. This is a standard argument, and is part of a broader phenomenon described, for example, in Section 3.4 of \cite{BH}, Bestvina and Handel's foundational paper on the subject of train tracks. In our case, one can see this by noting first that the segments of $\partial_H^U \Lambda_f$ adjacent to the unique periodic point in $\Lambda_f$ projecting to $k_1 \in I$ are glued in this fashion. Applying $f_G$ transports this gluing formation to all of the segments of $\partial_H^L \Lambda_f$.
\end{proof}

\section{The Farey tree}\label{sec:Farey}

In this section we introduce the Farey tree and discuss its relevant properties. For treatments from the perspective of continued fractions, we recommend \cite{K}. For further reading on the Farey tree specifically, refer to \cite{Ha}.

\subsection{The Farey sum}

All fractions $q = \frac{a}{b}$ are in lowest terms unless otherwise stated.

\begin{defn}
The \textit{Farey sum} of two fractions $\frac{a}{b}$ and $\frac{c}{d}$ is the fraction

\[
\frac{a}{b} \oplus \frac{c}{d} := \frac{a+c}{b+d},
\]
\end{defn}

\noindent where the fraction on the righthand side is not necessarily in lowest terms. Indeed, even if $\frac{a}{b}$ and $\frac{c}{d}$ are in lowest terms, their Farey sum need not be.  If, however, $|ad-bc| = 1$, then it is not hard to show that $\frac{a+c}{b+d}$ is in lowest terms. For this reason, we introduce the following non-standard terminology.

\begin{defn}
We say that two fractions $\frac{a}{b}$ and $\frac{c}{d}$ are \textit{compatible} if $|ad-bc| = 1$.  Similarly, we say that two rationals $p, q$ are \textit{compatible} if they have compatible fractional representatives. 
\end{defn}

Note that if two fractions are compatible, then they are necessarily in lowest terms. From now on, we only consider the Farey sum of compatible fractions.

The next two propositions are exercises in arithmetic.

\begin{prop}
Let $p, q$ be compatible rationals with $p<q$. Then $p < p \oplus q < q$.
\end{prop}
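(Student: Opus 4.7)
The plan is to verify the two inequalities by direct computation, using the compatibility condition to evaluate the sign of the cross-difference $ad-bc$. Writing $p = a/b$ and $q = c/d$ with positive denominators, the hypothesis $p < q$ is equivalent to $ad - bc < 0$, while compatibility gives $|ad - bc| = 1$. Together these pin down $bc - ad = +1$.

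With this sign fixed, both inequalities become one-line calculations. First I would compute
\[
\frac{a+c}{b+d} - \frac{a}{b} = \frac{b(a+c) - a(b+d)}{b(b+d)} = \frac{bc - ad}{b(b+d)} = \frac{1}{b(b+d)} > 0,
\]
which yields $p < p \oplus q$. The symmetric computation
\[
\frac{c}{d} - \frac{a+c}{b+d} = \frac{c(b+d) - d(a+c)}{d(b+d)} = \frac{bc - ad}{d(b+d)} = \frac{1}{d(b+d)} > 0
\]
yields $p \oplus q < q$.

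There is no real obstacle here: once the sign of $bc - ad$ is pinned down by combining $p < q$ with compatibility, the rest is just putting a difference of fractions over a common denominator. The only thing to be careful about is choosing positive denominators throughout so that $b(b+d)$ and $d(b+d)$ are positive, which is automatic. Notice that the proof actually shows the slightly stronger quantitative fact that $p\oplus q$ lies strictly between $p$ and $q$ at known distances $1/[b(b+d)]$ and $1/[d(b+d)]$, a quantitative statement that will likely be useful when the Farey tree structure is invoked later in Sections \ref{sec:model} and \ref{sec:mono}.
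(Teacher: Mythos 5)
Your proof is correct, and since the paper leaves this proposition as an ``exercise in arithmetic,'' your calculation is exactly the intended argument: fix the sign of the cross-difference $bc-ad$ from $p<q$ together with compatibility, then put the differences $p\oplus q - p$ and $q - p\oplus q$ over common denominators. One small observation worth keeping in mind: the strict mediant inequality $p < p\oplus q < q$ holds for \emph{any} fractions with positive denominators and $p<q$ (the cross-difference $bc-ad$ is positive regardless); compatibility is only needed to pin its value to exactly $1$, which is what makes your quantitative gap estimates $1/[b(b+d)]$ and $1/[d(b+d)]$ hold and what the next proposition uses to keep the mediant in lowest terms.
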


From now on, when we write $p \oplus q$ we will implicitly assume that $p < q$.

\begin{prop}
Let $p, q$ be compatible rationals. Then $p$ is compatible with $p \oplus q$, and $p \oplus q$ is compatible with $q$.
\end{prop}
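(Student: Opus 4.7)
The plan is to proceed by direct calculation using the defining condition $|ad-bc|=1$ of compatibility. Write $p = a/b$ and $q = c/d$, where both fractions are in lowest terms (a consequence of compatibility). By definition, $p \oplus q = (a+c)/(b+d)$.

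To verify that $p$ is compatible with $p \oplus q$, I would compute the cross-determinant
\[
a(b+d) - b(a+c) = ab + ad - ab - bc = ad - bc,
\]
whose absolute value equals $1$ by hypothesis. Symmetrically, to verify compatibility of $p \oplus q$ with $q$, I would compute
\[
(a+c)d - (b+d)c = ad + cd - bc - cd = ad - bc,
\]
again of absolute value $1$. The only subtlety is that the definition of compatibility implicitly requires the fractions to be in lowest terms; but since the cross-determinant equals $\pm 1$, any common divisor of the numerator and denominator of $(a+c)/(b+d)$ would divide $1$, so $(a+c)/(b+d)$ is automatically reduced. Thus both compatibility conditions hold simultaneously.

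There is no real obstacle here: the result is essentially a one-line identity reflecting the fact that the mediant construction corresponds to the matrix action $\bigl(\begin{smallmatrix} a & c \\ b & d \end{smallmatrix}\bigr) \mapsto \bigl(\begin{smallmatrix} a & a+c \\ b & b+d \end{smallmatrix}\bigr)$ on $\mathrm{GL}_2(\bbZ)$, which preserves the absolute value of the determinant. I would present the proof as the two short computations above, perhaps preceded by a sentence noting the $\mathrm{GL}_2(\bbZ)$ interpretation as motivation.
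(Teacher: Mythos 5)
Your proof is correct, and it is the natural direct computation; the paper itself gives no proof, dismissing the statement (together with the preceding proposition) as an exercise in arithmetic, so your argument supplies exactly what the paper leaves implicit.
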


\subsection{Constructing the Farey tree}

We are now ready to construct the Farey tree $\calF$. We do so inductively, building a new level $\calF_n$ on step at a time.\\

Set $\calF_1 = \{\frac{1}{2}\}$, the root of $\calF$.  Suppose that the first $n-1$ levels of $\calF$ have been constructed. The vertices in these levels are elements of $\bbQ \cap (0,1)$: arrange them according to the usual linear order on $\bbR$, and include $\frac{0}{1}$ and $\frac{1}{1}$ on the far left and right, respectively. To construct $\calF_n$, take the Farey sum of every element $q \in \calF_{n-1}$ with each of its two neighbors. This produces left- and right children $q_L, q_R$ of $q$, and the union of these children across all $q \in \calF_{n-1}$ is defined to be $\calF_n$. We furthermore include directed edges from each $q$ to both of its children $q_L$ and $q_R$. 

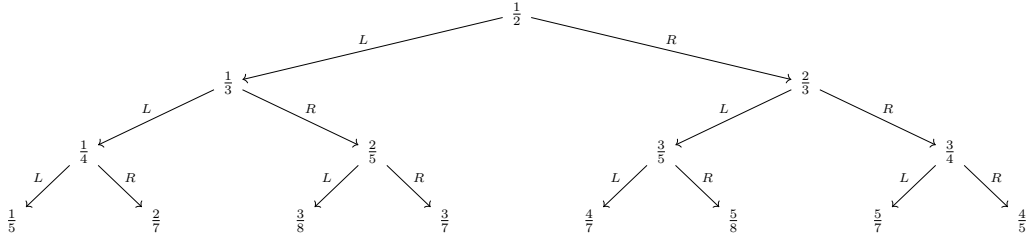
\begin{figure}
\centering
\adjustbox{scale=.7,center}{%
\begin{tikzcd}
& & & & & & & \frac{1}{2} \arrow[dllll, "L" '] \arrow[drrrr, "R"] & & & & & & & \\
& & & \frac{1}{3} \arrow[dll, "L" '] \arrow[drr, "R"] & & & & & & & & \frac{2}{3} \arrow[dll, "L" '] \arrow[drr, "R"] & & &\\
& \frac{1}{4} \arrow[dl, "L" '] \arrow[dr, "R"] & & & & \frac{2}{5} \arrow[dl, "L" '] \arrow[dr, "R"] & & & & \frac{3}{5} \arrow[dl, "L" '] \arrow[dr, "R"] & & & & \frac{3}{4}  \arrow[dl, "L" '] \arrow[dr, "R"] & \\
\frac{1}{5} & & \frac{2}{7} & & \frac{3}{8} & & \frac{3}{7} & & \frac{4}{7} & & \frac{5}{8} & & \frac{5}{7} & & \frac{4}{5}\\ 
\end{tikzcd}
}
\caption{The first four levels of the Farey tree, with edges included.}\label{fig:Farey}
\end{figure}

\begin{defn}
The \textit{Farey tree} is the directed graph $\calF$ whose vertices are $\bigcup_{n \geq 1} \calF_n$, and whose edges are precisely those described in the above procedure.
\end{defn}

\begin{rmk}
Some sources in the literature refer to $\calF$ as the \textit{Stern-Brocot tree}, while reserving the title of ``Farey tree" for a larger tree whose vertices are $\bbQ \cup \{\infty\}$. 
\end{rmk}

\subsection{Properties of the Farey tree}

A rational's position in $\calF$ is intimately related to its \textit{continued fraction expansion}. Indeed,  one can use the theory of continued fractions to construct $\calF$ and prove the following classical statement. See e.g. \cite{Ha} for more details.

\begin{prop}
The vertex set of $\calF$ is precisely $\bbQ \cap (0, 1)$. Moreover, for every vertex $r$ there is exactly one pair of compatible rationals $p, q$ such that $p \oplus q = r$. 
\end{prop}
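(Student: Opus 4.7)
The plan is to handle the two assertions separately. That the vertex set of $\calF$ is contained in $\bbQ \cap (0,1)$ is immediate from the construction, since Farey sums of compatible rationals in $[0,1]$ are automatically reduced and land in $(0,1)$. The substantive claims are (i) uniqueness of the Farey decomposition of a given vertex $r$, and (ii) that every rational in $(0,1)$ appears as a vertex. I dispatch (i) first as a number-theoretic warm-up.

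For uniqueness, write $p = c/d$ and $q = e/f$ in lowest terms, with $p < q$ compatible and $p \oplus q = r = a/b$ (also in lowest terms). Compatibility of $(p, q)$ makes the mediant $(c+e)/(d+f)$ already reduced, forcing $c + e = a$ and $d + f = b$. The inequality $p < q$ together with $|cf - de| = 1$ pins down $cf - de = -1$, which upon substituting $c = a - e$ and $d = b - f$ collapses to the congruence $af \equiv -1 \pmod{b}$. Since $\gcd(a, b) = 1$, this has a unique solution $f \in \{1, \ldots, b-1\}$, and $e = (af+1)/b$ along with $(c, d) = (a - e, b - f)$ are then forced.

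For surjectivity, I would introduce the sorted auxiliary sequence $S_n = \{0/1, 1/1\} \cup \bigcup_{k=1}^n \calF_k$ and establish by induction on $n$ the twin invariants: (a) any two consecutive elements of $S_n$ are compatible, and (b) in every consecutive pair of $S_n$, at least one element belongs to $\calF_n$. Invariant (a) uses the preceding proposition that the mediant of two compatible rationals is compatible with each summand; (b) holds because $\calF_n$ consists exactly of the new elements inserted between old neighbors in passing from $S_{n-1}$ to $S_n$. Together, (a) and (b) imply that the transition $S_n \to S_{n+1}$ inserts between each consecutive pair $p < q$ of $S_n$ precisely one element, namely $p \oplus q$, so the iterated procedure reproduces the full Stern--Brocot refinement.

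Now suppose toward contradiction that $r = a/b \in \bbQ \cap (0,1)$ is not a vertex of $\calF$. Then for every $n$, $r$ lies strictly between consecutive elements $p_n = a_n/b_n < r < q_n = a'_n/b'_n$ of $S_n$. Compatibility gives $q_n - p_n = 1/(b_n b'_n)$; meanwhile $r - p_n = (ab_n - a_n b)/(b b_n) \geq 1/(b b_n)$ and similarly $q_n - r \geq 1/(b b'_n)$. Adding and clearing denominators yields $b_n + b'_n \leq b$. On the other hand, one of $p_{n+1}, q_{n+1}$ equals the mediant $p_n \oplus q_n$ with denominator $b_n + b'_n$, so $b_{n+1} + b'_{n+1} > b_n + b'_n$ strictly, forcing $b_n + b'_n \to \infty$ and contradicting the uniform bound. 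The principal obstacle will be verifying the two invariants of $S_n$ cleanly; once that is in hand, the final contradiction is the classical mediant-pinching estimate.
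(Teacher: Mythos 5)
Your proof is correct, and it takes a genuinely different route from the paper, which does not actually prove this proposition: the paper merely notes that $\calF$ can be built via continued fractions and defers to \cite{Ha} for details. Your argument is self-contained. For uniqueness you reduce to the single congruence $af \equiv -1 \pmod b$, which has a unique solution $f \in \{1, \dots, b-1\}$ because $\gcd(a,b)=1$; it is worth adding a line verifying that the forced quadruple $(c,d,e,f)$ really gives fractions in $[0,1]$, which follows since $a(b-f) \geq 1$ forces $e \leq a$ (so $c \geq 0$) and $(b-a)f \geq 1$ forces $e \leq f$. For surjectivity, your invariants (a) and (b) on the Stern--Brocot stages $S_n$ are the right ones; the one point to make explicit in the induction is that distinct elements of $\calF_n$ land in disjoint open gaps of $S_{n-1}$, so the refinement $S_{n-1} \to S_n$ inserts exactly one new mediant per gap and (b) in fact holds with ``exactly one.'' The closing estimate $b_n + b'_n \leq b$ together with the strict growth of $b_n + b'_n$ under insertion of mediants is the classical pinching argument. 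What the continued-fraction approach buys is an explicit dictionary between the CF expansion of $r$ and its $L/R$ address in $\calF$, which the paper exploits informally elsewhere; what your approach buys is that it requires no CF machinery and works directly with the Farey sum and the compatibility relation already developed, so it could be inlined into the paper without adding prerequisites.
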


For this reason, we slightly abuse notation and write $\calF$ for $\bbQ \cap (0,1)$.  We will also make use of the larger set

\[
\overline{\calF} = \calF \cup \left \{ \frac{0}{1}, \frac{1}{1} \right \}.
\]

\begin{defn}
Given $p, q \in \calF$, we say $p$ is an \textit{ancestor} of $q$ if the directed path from $\frac{1}{2}$ to $q$ contains $p$. We also say that $q$ is a \textit{descendant} of $p$.
\end{defn}

\begin{defn}
Let $q \in \calF_n$. The \textit{left child}  of $q$ is the unique descendant  $q_L$of $q$ in $\calF_{n+1}$ such that $q_L < q$. Similarly, the \textit{right child} is the unique descendant $q_R$ of $q$ in $\calF_{n+1}$ such that $q<q_R$.  The \textit{left-} and \textit{right parents} of $q$ are the unique elements $q^L, q^R \in \overline{\calF}$ such that $q^L \oplus q^R = q$.
\end{defn}

\begin{rmk}
The terminology of parents and children can be misleading at first. It is true that

\[
\text{$(q_L)^R = q = (q_R)^L$ for all $q \in \calF$}.
\]

\noindent On the other hand, only one of $(q^L)_R$ and $(q^R)_L$ will be equal to $q$, since each $q \neq \frac{1}{2}$ has only one incoming edge. The other of the pair will be an ancestor of $q$.
\end{rmk}

\begin{prop}\label{prop:anc_denom}
Fix $q \in \calF$. If $r \in \calF$ satisfies $q^L < r < q^R$, then $r$ is a descendant of $q$. In particular, the denominator of $r$ is at least the denominator of $q$.
\end{prop}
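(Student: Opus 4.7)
The plan is strong induction on the level $j$ of $q$ in $\calF$, supported by an auxiliary ``forward'' statement that constrains where descendants can lie. I would first establish: for every $v \in \calF$, each descendant of $v$ lies in the open interval $(v^L, v^R)$. This I would prove by induction on depth in the subtree rooted at $v$. At depth zero, $v$ itself lies in $(v^L, v^R)$ since $v^L$ and $v^R$ are distinct elements of $\overline{\calF}$ Farey-summing to $v$. For depth at least one, the uniqueness of the Farey-parent decomposition (the proposition stated just before $\overline{\calF}$ is introduced) forces $(v_L)^L = v^L$, $(v_L)^R = v$, and symmetrically for $v_R$; so by the inner induction any descendant of $v_L$ lies in $(v^L, v) \subseteq (v^L, v^R)$, and analogously for descendants of $v_R$.

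The main claim is then handled by induction on $j$. The base case $j=1$ is immediate: when $q = \tfrac{1}{2}$, the interval $(q^L, q^R) = (0,1)$ contains all of $\calF$, and every rational in $\calF$ descends from the root. For the inductive step at level $j \geq 2$, let $p$ denote the unique direct tree-parent of $q$; by the remark preceding the proposition, $p \in \{q^L, q^R\}$. Assume $p = q^L$ (the other case is symmetric), so $q = p_R$ and hence $q^R = p^R$. Given $r$ with $q^L < r < q^R$, the case $r = q$ is trivial, so assume $r \neq q$. Then $r > p = q^L$ gives $r \neq p$, and the chain $p^L < p < r < p^R$ shows $r \in (p^L, p^R)$. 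The inductive hypothesis applied to $p$ (at level $j-1$) yields that $r$ is a descendant of $p$; thus $r$ equals $p$, or is a descendant of $p_L$, or is a descendant of $p_R = q$. The first option is ruled out since $r \neq p$, and the auxiliary lemma places all descendants of $p_L$ inside $((p_L)^L, (p_L)^R) = (p^L, p)$, which sits strictly below $p$, while $r > p$. Hence $r$ must be a descendant of $p_R = q$.

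For the ``in particular'' clause, I would observe that for any $v \in \calF$, the child $v_L = v^L \oplus v$ has $\mathrm{denom}(v_L) = \mathrm{denom}(v^L) + \mathrm{denom}(v) > \mathrm{denom}(v)$, and likewise for $v_R$. Denominators thus strictly increase along any directed path in $\calF$, so any descendant of $q$ has denominator at least $\mathrm{denom}(q)$. The main obstacle I foresee is purely organizational: making sure the auxiliary lemma is invoked in precisely the right place to separate ``descendant of $q = p_R$'' from ``descendant of the sibling subtree rooted at $p_L$''; everything else is a routine cascade of the Farey-sum identities.
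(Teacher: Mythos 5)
Your proof is correct, and it takes a genuinely different route from the paper. The paper argues directly from the inductive construction of $\calF$: at the stage when $q \in \calF_n$ is created, $q$ sits adjacent to its parents $q^L, q^R$ among $\bigcup_{k \le n}\calF_k$, so any later-created rational strictly between $q^L$ and $q^R$ must arise from iterated mediants starting with $q^L \oplus q$ or $q \oplus q^R$, hence must descend from $q$. Your argument instead runs a strong induction on the level of $q$, reducing to the tree-parent $p$, together with an auxiliary ``localization'' lemma asserting that all descendants of a vertex $v$ lie in $(v^L, v^R)$ (proved by induction on depth, using $(v_L)^L = v^L$, $(v_L)^R = v$, and symmetrically). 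The auxiliary lemma is exactly what is needed to rule out the sibling subtree $\calF(p_L)$ in the inductive step, which is a point the paper's proof glosses over with the informal phrase ``can only appear after taking the mediant.'' So your version is longer but makes rigorous the portion of the paper's argument that implicitly requires knowing that all rationals in $(q^L, q)$ trace back to $q_L$. Both approaches are sound; yours buys precision at the cost of setting up the level induction and the auxiliary lemma, while the paper's is terser but leans on the reader's familiarity with the Stern--Brocot construction. Your handling of the ``in particular'' clause via strictly increasing denominators along directed paths matches the standard argument and is fine.
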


\begin{proof}
If $r = q$ then there is nothing to show. Therefore, assume that $r \neq q$. Suppose $q \in \calF_n$. According to the construction of $\calF$, $q$ is directly adjacent to its parents $q^L$ and $q^R$ in the set $\bigcup_{k \leq n} \calF_k$. In particular, a rational $r$ between $q$ and $q^L$ can only appear in $\calF$ after taking the mediant of $q$ and $q^L$, making $r$ a descendant of $q$. Similarly, if $r \in \calF$ is between $q$ and $q^R$, then it can only appear after taking the mediant of $q$ and $q^R$, making it once again a descendant of $q$.
\end{proof}

\begin{prop}\label{prop:parent}
Let $q \in \calF_n$ for $n \geq 2$.  Then exactly one of $q^L$ and $q^R$ is an element of $\calF_{n-1}$. Moreover,

\begin{enumerate}
\item if $q^L \in \calF_{n-1}$, then $q^R=(q^L)^R$, and
\item if $q^R \in \calF_{n-1}$, then $q^L = (q^R)^L$.
\end{enumerate}
\end{prop}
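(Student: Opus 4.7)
The plan is to run an induction on $n$, using the inductive construction of $\calF$ together with Proposition \ref{prop:anc_denom} to pin down the neighbors of a vertex in the ordered list at each stage. The key observation is that by the construction of $\calF_n$, any $q \in \calF_n$ arises as the Farey sum $q = p \oplus N$, where $p \in \calF_{n-1}$ and $N$ is one of the two neighbors of $p$ in the linearly ordered set $\bigcup_{k \leq n-1} \calF_k \cup \{0/1, 1/1\}$. So my first step would be to recover $p$ and $N$ and then show they must coincide with $\{q^L, q^R\}$.

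Next I would identify $N$ with one of the parents of $p$. Specifically, I claim that for $p \in \calF_{n-1}$, the two neighbors of $p$ in the ordering at level $n-1$ are exactly $p^L$ and $p^R$. Indeed, by Proposition \ref{prop:anc_denom}, any rational strictly between $p^L$ and $p^R$ and distinct from $p$ is a descendant of $p$, hence lies in $\calF_k$ for some $k \geq n$, so does not appear in the ordering at level $n-1$. Consequently $N \in \{p^L, p^R\}$. By the uniqueness of the pair of compatible rationals summing to $q$, the parents $q^L, q^R$ must be exactly $p$ and $N$. Since $p \in \calF_{n-1}$ while the parents $p^L, p^R$ live in $\bigcup_{k \leq n-2} \calF_k \cup \{0/1, 1/1\}$ by the inductive hypothesis (the base case $n=2$ is immediate from $\calF_1 = \{1/2\}$ with parents $0/1$ and $1/1$), it follows that $p$ and $N$ lie in distinct levels, so exactly one of $q^L$, $q^R$ sits in $\calF_{n-1}$.

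For the two moreover clauses, I would split into cases according to whether $N = p^L$ or $N = p^R$. If $N = p^L$, then $q = p^L \oplus p$ is less than $p$, forcing $q^L = p^L$ and $q^R = p$; in particular $q^R \in \calF_{n-1}$ and $(q^R)^L = p^L = q^L$, which is case (2). Symmetrically, if $N = p^R$, then $q^L = p$ and $q^R = p^R$, yielding case (1) with $(q^L)^R = p^R = q^R$.

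The main obstacle I anticipate is the bookkeeping in the neighbor-identification step: making precise that the ordering at level $n-1$ really does place $p^L$ and $p^R$ immediately adjacent to $p$. Once Proposition \ref{prop:anc_denom} is invoked to rule out intervening ancestors or cousins, everything else is a straightforward case check driven by the inequality $q^L < q < q^R$ and the uniqueness of the compatible decomposition of $q$.
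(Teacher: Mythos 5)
Your proof is correct and takes essentially the same route as the paper: identify $\{q^L, q^R\}$ with the constructing pair $\{p, N\}$ via uniqueness of the compatible decomposition, deduce that $N$ is a parent of $p$ (using Proposition \ref{prop:anc_denom} to rule out intervening elements, where the paper simply asserts that adjacent rationals in the first $n-1$ levels stand in a parent--child relation), and then case-check by the inequality $q^L < q < q^R$. One small imprecision worth flagging: the claim that both $p^L$ and $p^R$ lie in $\bigcup_{k \leq n-2}\calF_k \cup \{0/1, 1/1\}$ is not literally the inductive hypothesis (which only places one of $p$'s parents in $\calF_{n-2}$); it is more cleanly obtained, without any induction, from the construction of $\calF_{n-1}$ together with uniqueness of compatible pairs applied to $p$ itself, which is how the paper handles this point.
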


\begin{proof}
In the construction of $\calF$, the two parents of $q \in \calF_n$ are adjacent rationals in the first $n-1$ levels of $\calF$. Thus, one of them must be the parent of the other. On the other hand, there is an edge of $\calF$ pointing from some $r \in \calF_{n-1}$ to $q$: this $r$ must be a parent of $q$. Denote by $s$ the other parent of $q$. As we noted above, $s$ must be a parent of $r$: if $r=q^L$, then $r<q<s$ and it follows that $s=q^R$ and $s=r^R$. Similarly, if $r=q^R$ then $s < q < r$ and we have $s=q^L = r^L$.
\end{proof}

\begin{defn}
For $q \in \calF$, we define $\calF(q)$ to be the subtree of $\calF$ with $q$ as its root. Equivalently, $\calF(q)$ is the collection of descendants of $q$, along with $q$ itself. 
\end{defn}

Note that for each $q$ there is a natural graph isomorphism from $\calF(q)$ to $\calF$ taking $q$ to $\frac{1}{2}$ and preserving the order on $\bbR$. Thus we can speak of the levels $\calF_n(q)$ of the subtree $\calF(q)$.

\begin{prop}\label{prop:parseq}
Fix $q \in \calF$. For any $s \in \calF(q_L)$ (resp., $s \in \calF(q_R))$ there exists a sequence

\[
s = s_0, \ldots, s_k = q
\]

\noindent such that $s_i = (s_{i-1})^R$ for $i = 1, \ldots, k$ (resp., $s_i = (s_{i-1})^L$).
\end{prop}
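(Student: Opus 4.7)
The plan is to handle the case $s \in \calF(q_L)$ by induction on the depth $d \geq 1$ of $s$ within the subtree $\calF(q_L)$, with the root $q_L$ sitting at depth $1$; the case $s \in \calF(q_R)$ will then follow by the symmetric argument using left parents throughout.

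For the base case $d = 1$ I have $s = q_L$. By the definition of the left child, $q_L = q^L \oplus q$, and by the uniqueness of parent decompositions (the proposition stating that every $r \in \calF$ has a unique pair of compatible parents summing to it) this forces $(q_L)^R = q$. So the length-one sequence $s_0 = q_L$, $s_1 = q$ does the job.

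For the inductive step, suppose $s$ has depth $d \geq 2$ in $\calF(q_L)$, and let $t$ denote the tree-parent of $s$ in $\calF$. Since $d \geq 2$, $t$ still belongs to $\calF(q_L)$, at depth $d - 1$, so by the inductive hypothesis there is a sequence $t = t_0, t_1, \ldots, t_k = q$ with $t_i = (t_{i-1})^R$. I would then apply Proposition \ref{prop:parent} to $s$: since $t$ is the tree-parent of $s$ and $t \in \{s^L, s^R\}$, either $t = s^R$ or $t = s^L$. In the former case I prepend $s$ to the inductive sequence, using $s^R = t = t_0$. In the latter case Proposition \ref{prop:parent} says exactly that $s^R = (s^L)^R = t^R = t_1$, so once again prepending $s$ produces a valid sequence $s, t_1, \ldots, t_k = q$.

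The only real content lies in this dichotomy, and in both branches Proposition \ref{prop:parent} pinpoints precisely where $s^R$ sits relative to the sequence already built for $t$; so I expect no serious obstacle. The one bit of care required is keeping the global parents $s^L, s^R$ distinct from the tree-parent of $s$, which is exactly the bookkeeping Proposition \ref{prop:parent} is designed to manage.
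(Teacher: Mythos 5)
Your proof is correct and takes essentially the same approach as the paper: induction on depth in $\calF(q_L)$, identifying the tree-parent of $s$ as one of $s^L, s^R$ via Proposition \ref{prop:parent}, then splitting into the two cases and prepending $s$ (using $s^R = (s^L)^R$ in the second case). The paper phrases the dichotomy as "exactly one of $s^L$ and $s^R$ lies in $\calF_{n-1}(q_L)$" rather than naming the tree-parent explicitly, but the reasoning and case analysis are identical.
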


\begin{proof}
We consider only the case $s \in \calF(q_L)$, since the other is analogous. We proceed by induction on the level $n$ of $\calF(q_L)$ containing $s$. If $n=1$, then $s = q_L$ and the sequence

\[
s_0 = q_L, s_1 = q
\]

\noindent is as desired. Now suppose that the proposition has been proven for all $s$ in the first $n-1$ levels of $\calF(q_L)$, and let $s \in \calF_n(q_L)$. By Proposition \ref{prop:parent} exactly one of $s^L$ and $s^R$ is in $\calF_{n-1}(q_L)$. If $s^R \in \calF_{n-1}(q_L)$ then we are done by the inductive hypothesis: simply prepend $s$ to the sequence of right parents connecting $s^R$ to $q$. If instead $s^L \in \calF_{n-1}(q_L)$, Proposition \ref{prop:parent} says that $s^R=(s^L)^R$. The inductive hypothesis implies that we have a sequence of right parents connecting $s^L$ to $q$, say

\[
s^L, s^R, \ldots, q.
\]

\noindent Then the desired sequence for $s$ is identical, after substituting in $s$ for $s^L$:

\[
s, s^R, \ldots, q.
\]

\noindent The proof is complete.
\end{proof}

\section{The Farey tree as a model for $\PA(m)$ and $\Pi(m)$}\label{sec:model}

The vertices of $\calF$ successively generate further vertices using the Farey sum operation. We transport this structure to both $\PA(m)$ and the family $\Pi(m)$ of pseudo-Anosovs they generate and extract dynamical consequences. The most important results from this section are Theorem \ref{thm:Farey_transform} and Proposition \ref{prop:rotprong_sum}.

\subsection{The model for $\PA(m)$}

We derive a transformational law for the principal kneading sequence $\nu(f)$ as we travel down $\calF$. Before stating this result, however, we make a few definitions.

\begin{defn}\label{def:0and1}
Fix $m \geq 2$. We will denote by $\textbf{0}_m$ the $m$-modal zig-zag map such that 

\begin{enumerate}
\item the itinerary of $x=1$ under $\textbf{0}_m$ is $(m) \cdot (m-2)^\infty$, and
\item $\textbf{0}_m$ is decreasing on $I_{m-1}$.
\end{enumerate}

\noindent We also define $\textbf{1}_m$ to be the $m$-modal zig-zag map such that

\begin{enumerate}
\item the itinerary of $x=1$ under $\textbf{1}_m$ is $(m)^\infty$, and
\item $\textbf{1}_m$ is decreasing on $I_{m-1}$.
\end{enumerate}

Set $\overline{\PA(m)} = \PA(m) \cup \{\textbf{0}_m, \textbf{1}_m\}$. We extend the map $\Phi$ to $\overline{\PA(m)}$ by declaring

\[
\Phi(\textbf{0}_m) = \frac{0}{1} \hspace{5mm} \text{and} \hspace{5mm} \Phi(\textbf{1}_m) = \frac{1}{1}.
\]
\end{defn}

\begin{defn}
Fix $m \geq 2$. We say that $f, g \in \overline{\PA(m)}$ are \textit{compatible} if $\Phi(f), \Phi(g) \in \overline{\calF}$ are compatible. In this case, we define $f \oplus g \in \PA(m)$ to be the interval map such that

\[
\Phi(f \oplus g) = \Phi(f) \oplus \Phi(g).
\]

\noindent We also borrow the language of parents and children. If $f = \Phi^{-1}(q)$, then we define the following elements of $\overline{\PA(m)}$:

\begin{itemize}
\item $f^L = \Phi^{-1}(q^L)$,
\item $f^R = \Phi^{-1}(q^R)$,
\item $f_L = \Phi^{-1}(q_L)$, and
\item $f_R = \Phi^{-1}(q_R)$.
\end{itemize}
\end{defn}

\bigskip

As Corollary \ref{cor:knead} shows, given $f \in \PA(m)$ we may write the principle kneading sequence $\nu(f)$ as the concatenation of sequences

\begin{equation}
\nu(f) = (m) \cdot \textbf{w}(f) \cdot \textbf{k}, \hspace{5mm} \text{where} \hspace{5mm} \textbf{k} = \begin{cases}
(m-1,k_1) & \text{if $m$ is even}\\
(m-1,k_1,0) & \text{if $m$ is odd.}
\end{cases}
\end{equation}

\noindent Here $\textbf{w}(f)$ is defined implicitly, and is the part of $\nu(f)$ determined by the intersection of a line segment of slope $\Phi(f)$ with horizontal and vertical integer lines in $\bbR^2$.


\begin{defn}
Fix $f \in \PA(m)$. We define the following three sequences by adusting the prefix or suffix of $\nu(f)$:

\begin{enumerate}
\item $\overline{\nu(f)} = (m) \cdot \textbf{w}(f) \cdot (m)$
\item $\widehat{\nu(f)} = (m) \cdot \textbf{w}(f) \cdot (m-2)$
\item $\underline{\nu(f)} = (m-2) \cdot \textbf{w}(f) \cdot \textbf{k}$.
\end{enumerate}
\end{defn}

\bigskip

Here are the transformation rules for $\nu(f)$.

\begin{thm}\label{thm:Farey_transform}
Let $f, g \in \PA(m)$ be compatible with $\Phi(f) < \Phi(g)$.  Then

\begin{equation}\label{eq:transform1}
\nu(f \oplus g) = \overline{\nu(f)} \cdot \underline{\nu(g)},
\end{equation}

\noindent and

\begin{equation}\label{eq:transform2}
\nu(f \oplus g) = \widehat{\nu(g)} \cdot \nu(f).
\end{equation}
\end{thm}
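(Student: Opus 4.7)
The plan is to pass from the kneading-sequence statement to a coefficient statement via Proposition \ref{prop:knead}, and then verify the coefficient identities using the integer-crossing characterization of Fundamental Lemma \ref{fundlem:Digit}. Write $\Phi(f) = a/b$ and $\Phi(g) = c/d$ with $bc - ad = 1$ and $a/b < c/d$, so $\Phi(f \oplus g) = (a+c)/(b+d)$; for each $h \in \{f, g, f \oplus g\}$ let $L_h(t) = \Phi(h) \cdot t$. Unwinding the definitions of $\overline{\nu(f)}$, $\widehat{\nu(g)}$, and $\underline{\nu(g)}$, equation (\ref{eq:transform1}) reduces to the coefficient identities $c_j(f \oplus g) = c_j(f)$ for $2 \leq j \leq b-1$, $c_b(f \oplus g) = m$, $c_{b+1}(f \oplus g) = m - 2$, and $c_j(f \oplus g) = c_{j-b}(g)$ for $b+2 \leq j \leq b+d-1$. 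Equation (\ref{eq:transform2}) reduces to $c_i(f \oplus g) = c_i(g)$ for $1 \leq i \leq d-1$, $c_d(f \oplus g) = m - 2$, and $c_i(f \oplus g) = c_{i-d}(f)$ for $d+1 \leq i \leq b+d-1$.

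The key geometric fact driven by $bc - ad = 1$ is that $L_{f \oplus g}(d) = c - 1/(b+d)$, just below the lattice point $(d, c)$, and $L_{f \oplus g}(b) = a + 1/(b+d)$, just above $(b, a)$. The bulk of the proof then rests on two elementary number-theoretic claims:
\begin{itemize}
\item[(A)] For $1 \leq j \leq c-1$, $\lfloor dj/c \rfloor = \lfloor (b+d)j/(a+c) \rfloor$.
\item[(B)] For $1 \leq j \leq a-1$, $\lfloor bj/a \rfloor = \lfloor (b+d)j/(a+c) \rfloor$.
\end{itemize}
To prove (A), set $k = \lfloor dj/c \rfloor$ and $r = dj - kc \in \{0, \ldots, c-1\}$; substituting $j = (kc + r)/d$ into $(b+d)j - k(a+c)$ and using $bc - ad = 1$ produces $(k + r(b+d))/d$, which the bounds $k \leq d-1$, $r \leq c-1$, and $b \geq 1$ squeeze into $[0, a+c)$. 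Claim (B) is entirely analogous after swapping the roles of the two fractions.

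These two claims have exactly the geometric content needed. Claim (A) says the horizontal-integer crossings of $L_{f \oplus g}$ at heights $1, \ldots, c-1$ lie in the same unit $t$-strips as those of $L_g$, so since on $[0, d]$ the line $L_{f \oplus g}$ reaches heights strictly less than $c$, one reads off $c_i(f \oplus g) = c_i(g)$ for $1 \leq i \leq d-1$. Applying Claim (A) after the coordinate shift $(t, y) \mapsto (t - b, y - a)$ similarly yields $c_j(f \oplus g) = c_{j-b}(g)$ for $b+2 \leq j \leq b+d-1$. Claim (B) and its analogue after the shift $(t, y) \mapsto (t - d, y - c)$ handle the two $f$-sided identities $c_j(f \oplus g) = c_j(f)$ for $2 \leq j \leq b-1$ and $c_i(f \oplus g) = c_{i-d}(f)$ for $d+1 \leq i \leq b+d-1$, where in each case the $1/(b+d)$ vertical offset introduced by the shift is precisely compensated by the slope change.

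The three boundary coefficients are computed by direct inspection of $L_{f \oplus g}$: on $[d-1, d]$ the line lies strictly between $c-1$ and $c$ (using $b + d \geq a + c + 2$, which follows from $a < b$ and $c < d$), so $c_d(f \oplus g) = m - 2$; on $[b-1, b]$ the line goes from below $a$ to $a + 1/(b+d) > a$, giving $c_b(f \oplus g) = m$; and on $[b, b+1]$ it stays strictly between $a$ and $a+1$, giving $c_{b+1}(f \oplus g) = m - 2$. I expect the main obstacle to be the careful bookkeeping around the $\pm 1/(b+d)$ shifts: this shift is small but must not push any horizontal-integer crossing across a vertical integer line, which is exactly what Claims (A) and (B) guarantee. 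Once these are in hand, the rest is mechanical manipulation of the kneading-coefficient dictionary.
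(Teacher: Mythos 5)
Your proposal is correct, and it takes a genuinely different route from the paper.

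The paper's proof is geometric: it forms a broken line $\calL$ whose two segments have slopes $\Phi(f)$ and $\Phi(g)$, isotopes it vertically to the straight line of slope $\Phi(f\oplus g)$, and argues that the kneading sequence of the intermediate broken line never changes because neither segment can pass through a lattice point during the isotopy. That last fact is furnished by Proposition \ref{prop:anc_denom}: any rational strictly between $\Phi(f)$ and $\Phi(g)$ has denominator at least $b+d$, so a segment of horizontal extent at most $b$ or $d$ cannot pass through a nonzero lattice point. The result then reads off from the initial perturbation at $s=0^+$. Your proof instead stays with the straight line $L_{f\oplus g}$ and verifies each coefficient identity directly via the floor-function claims (A) and (B), which you derive from $bc-ad=1$ by an explicit substitution. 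Both arguments ultimately live inside Fundamental Lemma \ref{fundlem:Digit}, but yours replaces the isotopy picture and the Farey-tree denominator bound with elementary modular arithmetic, and has the virtue of making the compatibility hypothesis visible precisely where it is used. I checked the algebra in Claim (A): $(b+d)j - k(a+c) = (k + r(b+d))/d$ with $k\le d-1$, $r\le c-1$ giving an upper bound of $bc+cd-b-1 < d(a+c)$ since $bc-ad=1 < b+1$; and in Claim (B) the analogous expression is $(r(b+d)-k)/b$, positive since $r\ge 1$ (use $\gcd(a,b)=1$) and bounded above by $ab+ad-b-d < b(a+c)$ since $ad-bc=-1<b+d$. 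Two bookkeeping points you flagged but should nail down explicitly if you write this up: the shift $(t,y)\mapsto(t-b,y-a)$ turns $L_{f\oplus g}$ into $(a+c)t'/(b+d) + 1/(b+d)$, so the crossing time at height $k$ becomes $(k(b+d)-1)/(a+c)$ and you must check that subtracting $1$ from the numerator does not change the floor — this holds because the remainder $(b+d)k \bmod (a+c)$ is at least $1$; similarly the other shift adds $1$ to the numerator and you must check the remainder is at most $a+c-2$, which your Claim (B) bound shows. Once those two sentences are added the argument is complete and self-contained.
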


\begin{proof}
Write $\Phi(f)= \frac{a}{b}$ and $\Phi(g) = \frac{c}{d}$. Consider the broken line segment parameterized as

\[
\calL(t) = \begin{cases}
(t, \Phi(f) \cdot t) & \text{if $t \in [0,b]$}\\
(t, a+\Phi(g) \cdot (t-b)) & \text{if $t \in [b, b+d].$}
\end{cases}
\]

This broken line consists of two straight line segments, one of slope $\Phi(f)$ and the second of slope $\Phi(g)$.  The straight line segment $\calL_q$ shares its endpoints with $\calL$, but lies above it in the plane. Consider the isotopy $\calH$ from $\calL$ to $\calL_q$ defined by dragging the non-smooth point of $\calL$ vertically towards $\calL_q$, while keeping this point connected by line segments to the endpoints $(0,0)$ and $(b+d, a+c)$. Pick a parameterization of this isotopy by $s \in [0,1]$ such that $\calH_0(\calL) = \calL$ and $\calH_1(\calL) = \calL_q$. 

We can define the \textit{kneading sequence} $\nu(s)$ of $\calH_s(\calL)$ using the intersections with horizontal and vertical integer lines, going from left to right. Explicitly, we declare the following algorithm:

\begin{enumerate}
\item Initialize $\nu(s) = (m)$.

\item For each $1 \leq i \leq b+d-2$:

\begin{enumerate}
\item if the intersection of $\calH_s(\calL)$ with $x=i+1$ is an integer point, append $\textbf{k}$ to $\nu(s)$;
\item else if the intersection of $\calH_s(\calL)$ with $x=i$ is an integer point, append $m$ to $\nu(s)$;
\item else if there is a horizontal intersection strictly between $x=i$ and $x=i+1$, append $m$ to $\nu(s)$;
\item else, append $m$ to $\nu(s)$;
\end{enumerate}
\item Append $\textbf{k}$ to $\nu(s)$.
\end{enumerate}

According to this algorithm, the initial and terminal kneading sequences of the isotopy satisfy

\begin{align*}
\nu(0) & = \nu(f) \cdot \nu(g),\\
\nu(1) & = \nu(f \oplus g).
\end{align*}

Moreover, the value of $\nu(s)$ only changes when one of the line segments shifts across an integer point $(x,y) \in \bbZ^2$. Indeed, suppose the left line segment of $\calH_{s_0}(\calL)$ passes through the integer point $(x,y) \neq (0,0)$, for some $s_0 > 0$.  Then this left line segment has slope $y/x$, which is equal to some rational $r$ with denominator dividing $x$ and therefore at most $b$, since $x \leq b$. But this rational $r$ is in the interval $[\Phi(f), \Phi(g)]$, so Proposition \ref{prop:anc_denom} implies that $x$ is at least as large as the denominator of $\Phi(f) \oplus \Phi(g)$, which is $b+d$. Since $d \geq 1$, we have a contradiction: the left line segment of $\calH_{s_0}(\calL)$ cannot pass through a second integer point besides $(0,0)$. An identical argument proves that the right line segment of $\calH_{s_0}(\calL)$ cannot pass through a second integer point besides $(a+c, b+d)$.

Therefore $\nu(s) = \nu(f \oplus g)$ for all $s > 0$. The initial perturbation of $\calH_0(\calL)$ replaces the suffix $\textbf{k}$ of $f$ with $(m)$ and the prefix $m$ of $\nu(g)$ with $m-2$, verifying (\ref{eq:transform1}). See Figure \ref{fig:Fareycut2}.

To prove (\ref{eq:transform2}) we proceed by an identical argument, except that we define $\calL$ to be the broken line whose first segment has slope $\Phi(g)$ and whose second segment has slope $\Phi(f)$. In this case, the initial isotopy replaces the suffix $\textbf{k}$ of $\nu(g)$ with $(m-2)$, while it does not alter the prefix $(m)$ of $\nu(f)$. See Figure \ref{fig:Fareycut1}.
\end{proof}

\begin{rmk}\label{rmk:edges}
The formulas are slightly different if $f=\textbf{0}_m$ or $g = \textbf{1}_m$. In the first case, $g= \Phi^{-1}(1/n)$ for some $n$, and so $\textbf{w}(g) = (m-2)^{n-2}$. It is not hard to see then that

\[
\textbf{w}(\textbf{0}_m \oplus g) = (m-2)^{n-1}.
\]

\noindent In the second case, $f=\Phi^{-1}(1-1/n)$ and $\textbf{w}(f) = (m)^{n-2}$. Therefore,

\[
\textbf{w}(f \oplus \textbf{1}_m) = (m-1)^{n-1}.
\]
\end{rmk}

 
From now on, when we write $\nu(q)$ for $q \in \calF$ we will mean $\nu \left (\Phi^{-1}(q) \right )$. Theorem \ref{thm:Farey_transform} allows us to compute $\nu(q)$ as we travel along $\calF$ from $\frac{1}{2}$ to $q$.

\begin{ex}
We compute $\nu(7/12)$ step by step. The path in $\calF$ from $1/2$ to $7/12$ is pictured in Figure \ref{fig:Fareyex}. Dotted arrows depict a parent-child relationship that is not an edge of $\calF$.

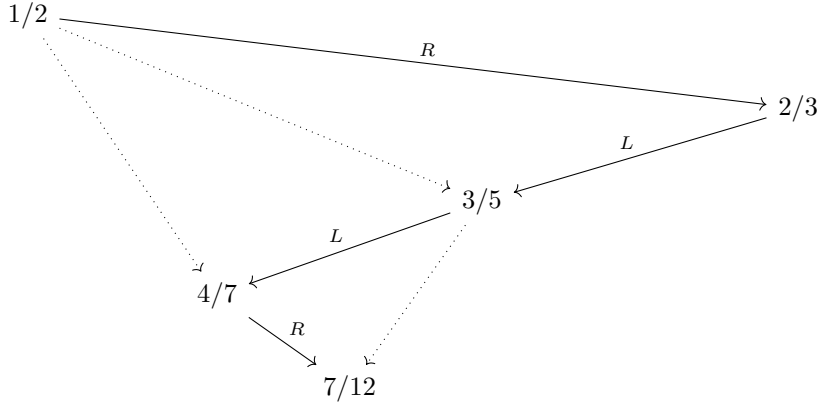
\begin{figure}[!ht]
\centering
\begin{tikzcd}
	1/2 
	\arrow[ddrrrr, dotted]
	\arrow[dddrr, dotted]
	\arrow[drrrrrrrr, "R"]\\
		& & & & & & & & 2/3 \arrow[dllll, "L" ']\\
		& & & & 3/5 \arrow[dll, "L" ']
				\arrow[ddl, dotted]\\
		& & 4/7 \arrow[dr, "R"]\\
		& & & 7/12
\end{tikzcd}
\caption{The unique path in the Farey tree from the root $1/2$ to $7/12$. Dotted arrows depict a parent-child relationship that is not an edge of $\calF$.}\label{fig:Fareyex}
\end{figure}

We begin with $\nu (1/2) = (m) \cdot \textbf{k}$.  Remark \ref{rmk:edges} tells us that 

\[
\nu \left ( \frac{2}{3} \right ) = (m) \cdot (m) \cdot \textbf{k}.
\]

\noindent The left child of $2/3$ is $3/5$, which has left parent $1/2$, hence by (\ref{eq:transform2})

\[
\nu \left ( \frac{3}{5} \right ) = \widehat{\nu \left ( \frac{2}{3} \right )} \cdot \nu \left ( \frac{1}{2} \right) = (m) \cdot (m,m-2,m) \cdot \textbf{k}.
\]

\noindent Similarly, the left child of $3/5$ is $4/7$, whose left parent is again $1/2$, giving

\[
\nu \left ( \frac{4}{7} \right ) = \widehat{\nu \left ( \frac{3}{5} \right )} \cdot \nu \left ( \frac{1}{2} \right) = (m) \cdot (m,m-2,m,m-2,m) \cdot \textbf{k}.
\]

\noindent See Figure \ref{fig:Fareycut1}. Finally we arrive at $7/12$, whose left- and right parents are $4/7$ and $3/5$, respectively. Applying (\ref{eq:transform1}) gives

\begin{align*}
\nu \left ( \frac{7}{12} \right ) = \overline{\nu \left ( \frac{4}{7} \right )} \cdot \underline{\nu \left ( \frac{3}{5} \right )} & = (m) \cdot (m,m-2,m,m-2,m,m) \cdot (m-2,m,m-2,m) \cdot \textbf{k}\\
& = (m) \cdot (m,m-2,m,m-2,m,m,m-2,m,m-2,m) \cdot \textbf{k}.
\end{align*}

\noindent See Figure \ref{fig:Fareycut2}.
\end{ex}

\begin{figure}
\begin{subfigure}{.9\textwidth}
\centering
\includegraphics[scale=.3]{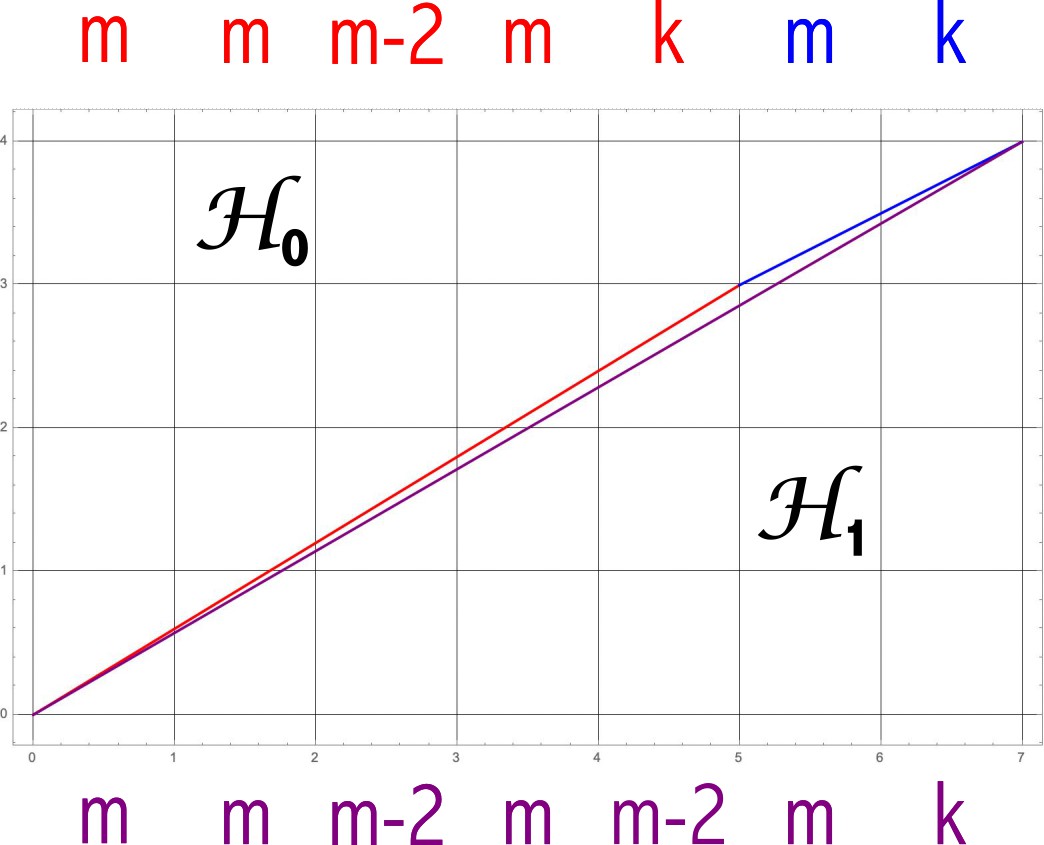}
\caption{Computing $\nu(4/7)$ from $\nu(3/5)$ and $\nu(1/2)$.}\label{fig:Fareycut1}
\end{subfigure}
\par \bigskip
\begin{subfigure}{.9\textwidth}
\centering
\includegraphics[scale=.35]{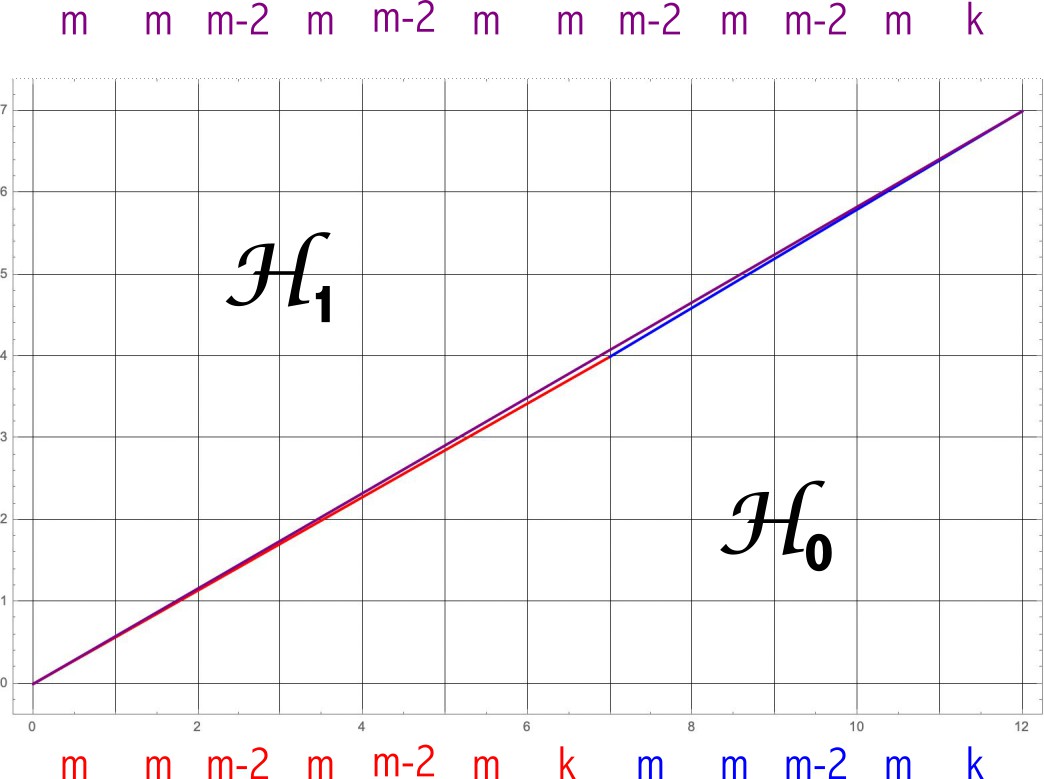}
\caption{Computing $\nu(7/12)$ from $\nu(4/7)$ and $\nu(3/5)$.}\label{fig:Fareycut2}
\end{subfigure}
\caption{The last two steps in computing $\nu(7/12)$.}\label{fig:Fareycut}
\end{figure}

\begin{rmk}\label{rmk:perturb}
Before moving on to $\Pi(m)$, we pause to reflect on the ramifications of Theorem \ref{thm:Farey_transform} and Remark \ref{rmk:edges}. We may interpret these as saying that making a turn in $\calF$ at $\Phi(f)$ amounts to \textit{perturbing} the point $f^{b-1}(1) \in I_{m-1}$. Indeed, suppose that $\Phi(f) = \frac{a}{b}$. Then for any left child $g$ and any right child $h$ we have

\begin{equation}\label{eq:transform3}
  \begin{alignedat}{3}
    &\Pre_b(\It_g(1)) &&= \widehat{\nu(f)} && = (m) \cdot \textbf{w}(f) \cdot (m-2)\\
    & \Pre_b(\It_f(1)) && = \Pre_b(\nu(f)) && = (m) \cdot \textbf{w}(f) \cdot (m-1)\\
    &\Pre_b(\It_h(1)) &&= \overline{\nu(f)} && = (m) \cdot \textbf{w}(f) \cdot (m)
  \end{alignedat}
\end{equation}

\noindent Descending to the left (resp. right) in $\calF$ perturbs $f^{b-1}(1)$ to the left (resp., right). As we will see in Section \ref{sec:mono}, it is this behavior that causes the stretch factor $\lambda(f)$ to change monotonically with $\Phi(f)$.
\end{rmk}

\bigskip

Finally, it is instructive to graph the association of a rational number $q$ with the stretch factor of the corresponding $f = \Phi^{-1}(q) \in \PA(m)$, for a fixed $m$.  Figure \ref{fig:entropy} does this for the case $m=2$. As Theorem \ref{mainthm:mono} claims, the stretch factor increases monotonically with the rotation number. Additionally, we see that each stretch factor has a gap on either side separating it from other stretch factors. Indeed, if $\{q_k\}_k$ is a sequence of rationals limiting to $q \in \calF$, with $q_k \neq q$ for all $k$, then the stretch factor of $\Phi^{-1}(q_k)$ is bounded away from that of $q$. 

One should think of this phenomenon as a manifestation of the Farey tree structure on $\PA(m)$.  For example, the left children of any $q \in \calF$ limit to $q$ from below: that is, the children of $q$ are arbitrarily close to $q$ in the topology of $\bbR$. In the Farey tree, however, these children remain far away from $q$. The deeper down $\calF$ $q$ appears, the smaller the gap between $q$ and its children becomes, but the gap is always there. Experimental evidence shows that these gaps are not symmetric.  

A similar plot appears as Figure 12 in Hall's thesis \cite{H}, which among other things analyzes the case $m=1$. In that figure the entropy is monotonically \textit{decreasing}, but this difference is merely an artifact of the definition of $\Phi(f)$. Indeed, if we define $\Phi'(f) = 1-\Phi(f)$, then $\Phi'$ is monotonically decreasing and is precisely equal to $\rot_\infty(\psi_f)$, by Theorem \ref{mainthm:RotationNumber}.

\begin{figure}
\centering
\includegraphics[scale=0.4]{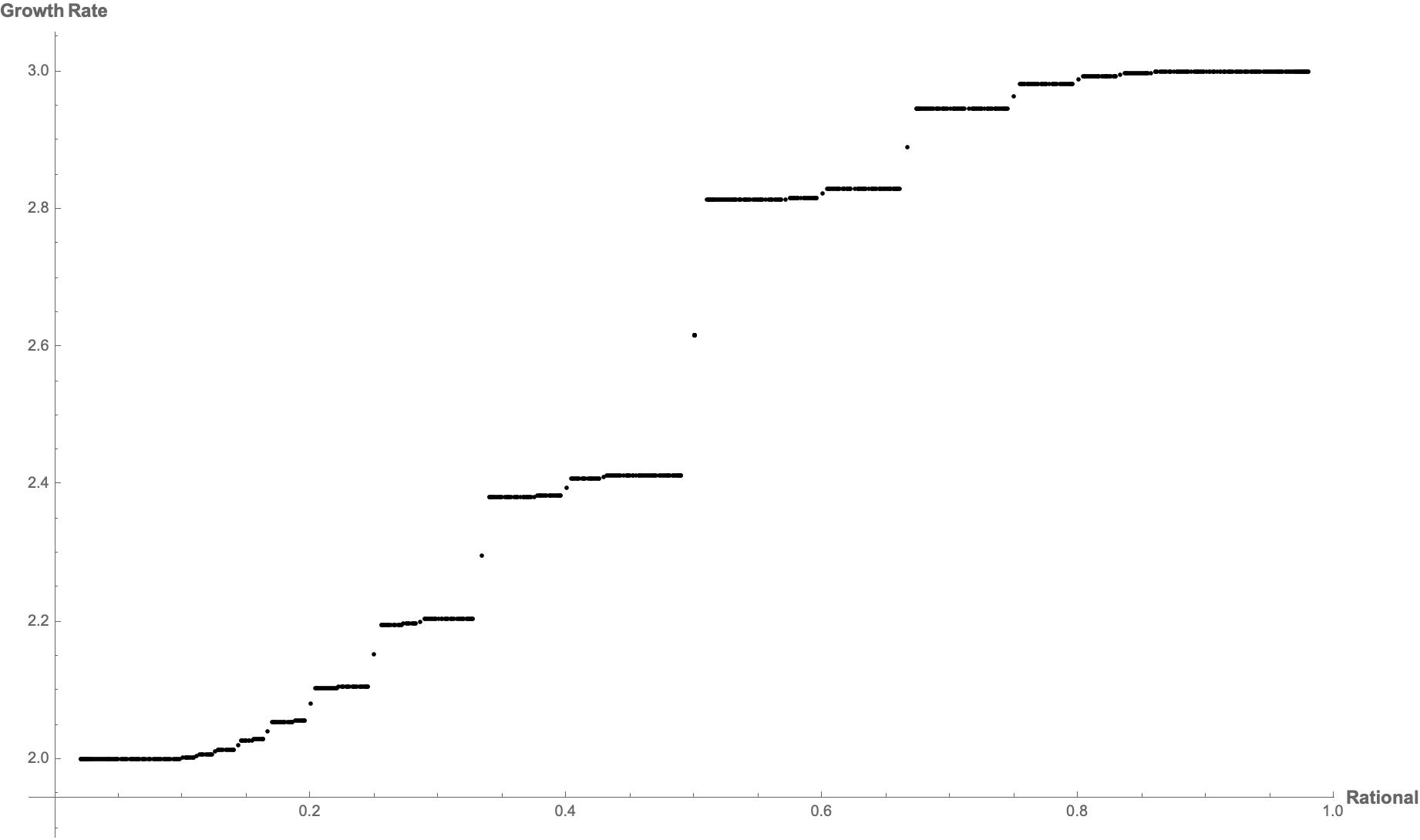}
\caption{We graph the stretch factor of elements of $\PA(2)$. Given $f \in \PA(2)$,  we plot the point $(\Phi(f), \lambda(f)$. Each stretch factor is contained in a gap, and rational}\label{fig:entropy}
\end{figure}

\subsection{The model for $\Pi(m)$}

Theorem \ref{thm:Farey_transform} and Remark \ref{rmk:edges} have implications for how the Markov partition and transition matrix of $f = \Phi^{-1}(q) \in \PA(m)$ transform as $q \in \calF$ varies. These transformation laws are reincarnated in the Markov partition and transition matrix of $\psi(f) \in \Pi(m)$. For brevity, however, we spend this subsection describing a dynamico-topological transformation law that is more meaningful from the perspective of $\Pi(m)$.

\begin{defn}
If $f, g \in \PA(m)$ are compatible, then we say that the pseudo-Anosovs $\psi_f, \psi_g \in \Pi(m)$ are \textit{compatible}, as well. In this case, define $\psi_f \oplus \psi_g$ to be the element $\Psi(f \oplus g) \in \Pi(m)$.
\end{defn}

\begin{defn}
Given $f \in \PA(m)$, denote by $n_\infty(\psi_f)$ the number of prongs of the invariant foliations of $\psi_f$ at the singularity $\infty$.
\end{defn}

Note that if $\Phi(f) = \frac{a}{b}$ then $n_\infty(\psi_f) = b$.

\begin{prop}\label{prop:rotprong_sum}
Fix $m \geq 2$, and let $\phi, \psi \in \Pi(m)$ be compatible. Then

\begin{equation}\label{eq:rot_sum}
\rot_\infty(\phi \oplus \psi) = \rot_\infty(\phi) \oplus \rot_\infty(\psi).
\end{equation}

\noindent Moreover,

\begin{equation}\label{eq:prong_sum}
n_\infty(\phi \oplus \psi) = n_\infty(\phi) + n_\infty(\psi).
\end{equation}
\end{prop}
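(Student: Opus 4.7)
The plan is to reduce both identities to Theorem~\ref{mainthm:RotationNumber} and elementary arithmetic of Farey sums, with the real content already residing in the definition of $\oplus$ on $\Pi(m)$. Write $\phi = \psi_f$ and $\psi = \psi_g$ with $\Phi(f) = a/b$ and $\Phi(g) = c/d$, so that the hypothesis that $\phi$ and $\psi$ are compatible translates to $|ad - bc| = 1$. By the definitions in Section~\ref{sec:model}, $\phi \oplus \psi = \psi_{f \oplus g}$ where $\Phi(f \oplus g) = (a+c)/(b+d)$, and this fraction is already in lowest terms because the Farey sum of compatible fractions is in lowest terms.

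For \eqref{eq:rot_sum}, I would apply Theorem~\ref{mainthm:RotationNumber} three times to obtain
\[
\rot_\infty(\phi) = \tfrac{b-a}{b}, \qquad \rot_\infty(\psi) = \tfrac{d-c}{d}, \qquad \rot_\infty(\phi \oplus \psi) = \tfrac{b+d-a-c}{b+d}.
\]
Since $\gcd(a,b)=\gcd(c,d)=1$, the first two are automatically in lowest terms, and a one-line computation gives
\[
|(b-a)d - b(d-c)| = |bc - ad| = 1,
\]
so $\rot_\infty(\phi)$ and $\rot_\infty(\psi)$ are themselves compatible and their Farey sum is defined. That Farey sum evaluates directly to $(b+d-a-c)/(b+d)$, which matches $\rot_\infty(\phi \oplus \psi)$.

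For \eqref{eq:prong_sum}, I would invoke the observation stated immediately before the proposition, namely that $n_\infty(\psi_f)$ equals the denominator of $\Phi(f)$ in lowest terms (which in turn comes from the analysis in the proof of Theorem~\ref{mainthm:RotationNumber}, where the $n-1$ attracting fixed points $p_i$ on $\partial_H^L \Lambda_f$ glue to the $b = n-1$ prongs at $\infty$). Thus $n_\infty(\phi) = b$, $n_\infty(\psi) = d$, and $n_\infty(\phi \oplus \psi) = b+d$ since $(a+c)/(b+d)$ is already in lowest terms by compatibility. Adding gives \eqref{eq:prong_sum}.

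The only genuine bookkeeping issue, and in that sense the main (mild) obstacle, is ensuring that compatibility of $\Phi(f)$ and $\Phi(g)$ passes to compatibility of $1-\Phi(f)$ and $1-\Phi(g)$, so that the right-hand side of \eqref{eq:rot_sum} is meaningful; this is precisely the identity $|(b-a)d - b(d-c)| = |bc-ad|$ noted above. Once this is in hand, the proof of the proposition is essentially the observation that the involution $q \mapsto 1-q$ of $\calF$ is an automorphism of the Farey sum on compatible pairs.
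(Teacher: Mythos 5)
Your proof is correct and follows essentially the same route as the paper: write $\phi=\psi_f$, $\psi=\psi_g$, apply Theorem~\ref{mainthm:RotationNumber} to convert each rotation number to $1-\Phi(\cdot)$, and then observe that the map $q\mapsto 1-q$ commutes with the Farey sum, while the prong count follows from the denominator-in-lowest-terms observation. The one thing you do that the paper leaves implicit is the explicit check $|(b-a)d-b(d-c)|=|bc-ad|=1$ showing that $\rot_\infty(\phi)$ and $\rot_\infty(\psi)$ are themselves compatible, so that the right-hand side of~\eqref{eq:rot_sum} is well-posed; this is a worthwhile addition but does not change the structure of the argument.
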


\begin{proof}
These relations follow directly from our definitions and Theorem \ref{mainthm:RotationNumber}. Let $f, g \in \PA(m)$ be the compatible maps generating $\phi$ and $\psi$, respectively. Write $\Phi(f) = \frac{a}{b}$ and $\Phi(g) = \frac{c}{d}$. Then

\begin{align*}
\rot_\infty(\phi \oplus \psi) & = 1 - \left ( \frac{a}{b} \oplus \frac{c}{d} \right)\\
& = \frac{b+d-(a+c)}{b+d} \\
& = \frac{(b-a) + (d-c)}{b+d}\\
& = \frac{b-a}{b} \oplus \frac{d-c}{d}\\
& = \left (1-\frac{a}{b} \right ) \oplus \left ( 1 - \frac{c}{d} \right )\\
& = \rot_\infty(\phi) \oplus \rot_\infty(\psi).
\end{align*}

\noindent This proves Equation (\ref{eq:rot_sum}). Since for any $h \in \PA(m)$ the fraction $\Phi(h)$ is always in lowest terms, and since the number of prongs of $\psi_h$ at infinity is equal to the denominator of $\Phi(h)$, Equation (\ref{eq:prong_sum}) now follows.
\end{proof}

\section{Monotonicity of entropy}\label{sec:mono}

Recall that the topological entropy of a $\lambda$-zig-zag map $f$ is $\h(f) = \log(\lambda)$. In the study of interval dynamics, many one-parameter families of maps have been shown to exhibit \textit{monotonicity of entropy}: that is, entropy monotonically increases or decreases with the parameter. In this section we prove Theorem \ref{mainthm:mono}, which can be viewed as another example of this phenomenon. Here, the parameter is the anti-rotation number $\Phi: \PA(m) \to \calF$.

\begin{mainthm}
Let $f, g \in \PA(m)$. Then

\[
\Phi(f) < \Phi(g) \iff \lambda(\psi_f) < \lambda(\psi_g).
\]
\end{mainthm}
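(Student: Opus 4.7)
The plan is to reduce the full monotonicity statement to a one-step comparison across parent-child edges of the Farey tree, and then prove that one-step statement via an algebraic identity among digit polynomials, argued by induction on level in $\calF$.

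For the reduction, given $f, g \in \PA(m)$ with $\Phi(f) < \Phi(g)$, let $q \in \calF$ be the least common ancestor of $\Phi(f)$ and $\Phi(g)$. Proposition \ref{prop:parseq} supplies chains $\Phi(f) = s_0, \ldots, s_k = q$ with $s_i = (s_{i-1})^R$ and $\Phi(g) = t_0, \ldots, t_\ell = q$ with $t_i = (t_{i-1})^L$, so the global inequality reduces to the one-step claim that for every $s \in \calF$,
\[
\lambda(\Phi^{-1}(s^L)) \;<\; \lambda(\Phi^{-1}(s)) \;<\; \lambda(\Phi^{-1}(s^R)).
\]

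To prove the right-parent half of the one-step claim I would set $h = \Phi^{-1}(s)$, $g = \Phi^{-1}(s^R)$, $\tilde f = \Phi^{-1}(s^L)$, and write $b_\cdot$ for the denominator of the corresponding rational. The transformation law $\nu(h) = \widehat{\nu(g)} \cdot \nu(\tilde f)$ from Theorem \ref{thm:Farey_transform}, combined with Proposition \ref{prop:knead} and Fundamental Lemma \ref{fundlem:Digit}, identifies the coefficients of $D_h$ as a splicing of the coefficients of $D_g$ and $D_{\tilde f}$: the leading block matches $D_g$, a single middle slot is perturbed from $m$ to $m-2$, and the trailing block matches $D_{\tilde f}$. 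A direct computation exploiting $D_g(\lambda_g) = 0$ should yield the clean polynomial identity
\[
D_h(t) \;=\; t^{b_{\tilde f}}\bigl(D_g(t) + t - 1\bigr) \;+\; D_{\tilde f}(t),
\]
and in particular $D_h(\lambda_g) = \lambda_g^{b_{\tilde f}}(\lambda_g - 1) + D_{\tilde f}(\lambda_g)$. A mirror identity $D_h(t) = t^{b_{\tilde g}}\bigl(D_g(t) - (t-1)\bigr) + D_{\tilde g}(t)$, now with $g = \Phi^{-1}(s^L)$ and $\tilde g = \Phi^{-1}(s^R)$, handles the left-parent case. With these identities in hand, I would induct on the level of $s$ in $\calF$: in the step case both $\tilde f$ and $g$ sit at strictly smaller levels and satisfy $\Phi(\tilde f) < \Phi(g)$, so the inductive form of the theorem (via the common-ancestor reduction at lower levels) gives $\lambda_{\tilde f} < \lambda_g$, hence $D_{\tilde f}(\lambda_g) > 0$. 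Combined with $\lambda_g > 1$ this forces $D_h(\lambda_g) > 0$, whence $\lambda_h < \lambda_g$. Base cases at level two, where $s^L$ or $s^R$ may be one of the endpoints $0/1, 1/1$ corresponding to the limit maps $\textbf{0}_m, \textbf{1}_m$, are dispatched by direct computation using Remark \ref{rmk:edges}.

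The main obstacles I anticipate are the algebraic bookkeeping needed to derive the identity, in particular respecting the reciprocity $c_i = c_{b+1-i}$ and the extra $0$ appended to $\textbf{k}$ when $m$ is odd, together with the sign step that turns $D_h(\lambda_g) > 0$ into $\lambda_h < \lambda_g$. The latter requires ruling out positive real roots of $D_h$ in the interval $(1, \lambda_h)$, which I would extract from the Perron-Frobenius property of the strong Markov matrix of $h$ whose simple dominant eigenvalue $\lambda_h$ strictly exceeds all other eigenvalues in modulus, combined with the elementary estimate $D_h(1) = 2 - \sum c_i(h) < 0$, to conclude that $D_h$ is strictly negative throughout $(1, \lambda_h)$.
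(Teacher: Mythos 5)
Your reduction of the theorem to a one-step comparison along parent--child edges of the Farey tree, via Proposition~\ref{prop:parseq} and a common-ancestor argument, is exactly what the paper does inside the proof of Proposition~\ref{prop:cutkneadmono}, so that part is fine. Your polynomial identity is also genuinely correct: tracing coefficients through Theorem~\ref{thm:Farey_transform} together with Proposition~\ref{prop:knead} and Lemma~\ref{fundlem:Digit} does yield, for $h = \tilde f \oplus g$ with $\Phi(\tilde f) < \Phi(g)$,
\[
D_h(t) = t^{b_{\tilde f}}\bigl(D_g(t) + t - 1\bigr) + D_{\tilde f}(t),
\]
so that $D_h(\lambda_g) = \lambda_g^{b_{\tilde f}}(\lambda_g - 1) + D_{\tilde f}(\lambda_g) > 0$ once $\lambda_{\tilde f} < \lambda_g$ is known inductively. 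This identity is a nice observation that does not appear in the paper.

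The gap is the final sign step. You claim that Perron--Frobenius applied to the strong Markov matrix $M_h$, together with $D_h(1)<0$, forces $D_h$ to be negative on all of $(1,\lambda_h)$. That inference does not hold. Perron--Frobenius gives only that $\lambda_h$ is a simple eigenvalue and that every other eigenvalue of $M_h$ has strictly smaller modulus; it says nothing about whether those other eigenvalues are real, and it certainly does not exclude a pair of real eigenvalues lying in $(1,\lambda_h)$. With such a pair, $D_h$ would cross from negative to positive and back to negative before reaching $\lambda_h$, and then $D_h(\lambda_g)>0$ would be perfectly compatible with $1 < \lambda_g < \lambda_h$. So the deduction $D_h(\lambda_g) > 0 \Rightarrow \lambda_h < \lambda_g$ is not justified as written.

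The statement you need is in fact true for these specific polynomials, but it must be proved from the coefficient structure rather than from Perron--Frobenius. Since $D_h$ is reciprocal (Lemma~\ref{fundlem:Digit}), it suffices to show $\calR(D_h)(t) = 1 + t^{b_h+1} - \sum_{i=1}^{b_h} c_i t^i$ is strictly decreasing on $(0,1]$: together with $\calR(D_h)(\lambda_h^{-1})=0$ this forces $\calR(D_h) < 0$ on $(\lambda_h^{-1},1]$, hence $D_h(t) = t^{b_h+1}\calR(D_h)(t^{-1}) < 0$ on $[1,\lambda_h)$. The needed derivative bound uses $c_1 = c_{b_h} = m$ and $c_i \ge m-2 \ge 0$: for $t\in(0,1]$,
\[
(b_h+1)t^{b_h} - \sum_{i=1}^{b_h} i\,c_i\,t^{i-1} \;\le\; (b_h+1)t^{b_h} - m\bigl(1 + b_h t^{b_h-1}\bigr) \;=\; t^{b_h-1}\bigl((b_h+1)t - m b_h\bigr) - m \;<\; 0,
\]
since $(b_h+1)t - m b_h \le (b_h+1) - 2b_h = 1-b_h \le 0$ once $b_h\ge 2$, which is guaranteed by $|\PC(f)|\ge 4$.

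For comparison, the paper avoids all sign analysis of $D_f$: it converts the Farey transformation law into an order relation on kneading sequences in the twisted lexicographic order (Propositions~\ref{prop:Farey_itin}, \ref{prop:cutkneadmono}, \ref{prop:itmono}) and then invokes Milnor and Thurston's entropy monotonicity result, cited as Proposition~\ref{prop:kneadmono}, to translate that order into a comparison of stretch factors. That route leans on established kneading theory; your route, once the sign step is repaired as above, is more explicit and self-contained. Either way, you should double-check the corresponding identity and inequality in the case $m$ odd, where the suffix $\textbf{k}$ carries an extra symbol.
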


\begin{proof}
The result follows from the chain of equivalences:

\begin{align*}
\Phi(f) < \Phi(g) & \iff \It_f(1) <_E \It_g(1) \tag{\ref{prop:cutkneadmono}}\\
& \iff \calK_f \ll \calK_g \tag{\ref{prop:itmono}}\\
& \iff \lambda(f) < \lambda(g) \tag{\ref{cor:itmono}}\\
& \iff \lambda(\psi_f) < \lambda(\psi_g).
\end{align*}

\noindent The last equivalence is because $\lambda(f) = \lambda(\psi_f)$.
\end{proof}

In Section \ref{subsec:lexico} we introduce the \textit{twisted lexicographic order} $\leq_E$, and in Section \ref{subsec:proofmono} we prove Proposition \ref{prop:cutkneadmono} and Corollary \ref{cor:itmono}.

\subsection{Ordering kneading sequences}\label{subsec:lexico}

In this section we recall the twisted lexicographic ordering of a continuous piecewise-monotone map $f: I \to I$ with finitely many critical points $k_1, \ldots, k_m$. We call such a map \textit{$m$-modal}. All definitions rely on the data of a given such $f$, which we therefore often suppress from the notation. We are interested in the case that $f$ is a zig-zag map.\\

Refer to Section \ref{subsection:Knead} for the definitions of address, itinerary, kneading data, and principal kneading sequence.

\begin{defn}
The \textit{sequence space} of $f$ is the set

\[
\Sigma_f = \{ \It_f(x) : x \in I\}.
\]

\noindent The \textit{shift} on $\Sigma_f$ is the unique map $\sigma: \Sigma_f \to \Sigma_f$ such that the following diagram commutes:

\begin{center}
\begin{tikzcd}
I \arrow[r, "f"] \arrow[d, "\It_f" '] & \arrow[d, "\It_f"] I \\
\Sigma_f \arrow[r, "\sigma"] & \Sigma_f
\end{tikzcd}
\end{center}

\noindent More concretely, $\sigma$ shifts all entries of $\It_f(x)$ one space to the left, deleting the first entry.

\end{defn}

\begin{defn}
The \textit{sign function} of $f$ is the function $E: \calA \to \{-1, 0, +1\}$ such that

\[
E(j) = \begin{cases}
+1 & \text{if $f$ is increasing on $I_j$}\\
-1 & \text{if $f$ is decreasing on $I_j$}\\
0 & \text{if $j=k_i$ is a critical point.}
\end{cases}
\]

\noindent Note that since $f$ is continuous, $E$ is determined by $E(0)$. In the following discussion we work with a fixed $f$ and the sign function $E$ it defines.
\end{defn}

\begin{ex}\label{ex:sign}
Suppose $f \in \PA(m)$ with sign vector $E$. Then for $0 \leq j \leq m$, 

\[
E(j) = \begin{cases}
(-1)^j & \text{if $m$ is even}\\
(-1)^{j+1} & \text{if $m$ is odd.}
\end{cases}
\]

\noindent In particular, $E(m) = +1$ always.
\end{ex}
 
 \begin{defn}
 The \textit{cumulative sign vector} of a (possibly infinite) word $A$ in the alphabet $\calA_m$ is the sequence $(s_i(A))_i$ such that 
 
 \[
 s_i(A)=\begin{cases}
 1 & \text{if $i=0$}\\
E(A_{i-1}) \cdot s_{i-1}(A) & \text{if $i \geq 1$.}
 \end{cases}
 \]
 \end{defn}
 
\begin{rmk}\label{rmk:possign}
Consider $f \in \PA(m)$ and write $\Phi(f) = \frac{a}{b}$. From Corollary \ref{cor:knead} we know that the entries of the principal kneading sequence of $f$ satisfy

\[
\text{$\nu_i(f) \in \{m-2, m\}$ for all $0 \leq i \leq b-2$.}
\]

\noindent As Example \ref{ex:sign} shows, $E(m) = E(m-2) = +1$ for all values of $m$. Therefore, the cumulative signs of $\nu(f)$ satisfy

\begin{equation}\label{eq:possign}
\text{$s_i(\nu(f)) = +1$ for all $0 \leq i \leq b-1$.}
\end{equation}
\end{rmk}

We now define a partial order on $\Sigma_f$.

\begin{defn}\label{defn:lexico}
Let $f$ be an $m$-modal map with sign vector $E$. We define the \textit{twisted lexicographic order} $\leq_E$ on $\Sigma_f$ as follows. First, we declare

\[
0 <_E k_1 <_E 1 <_E k_2 <_E 2 <_E \cdots <_E k_m <_E m.
\]

\noindent For $A, B \in \Sigma_f$ we now define $A \leq_E B$ if either $A=B$ or else, for the maximal index $l \geq 0$ such that $\Pre_l(A)=\Pre_l(B)$, either

\[
\begin{cases}
A_{l+1} <_E B_{l+1} & \text{if $s_l(A) = +1$, or}\\
A_{l+1} >_E B_{l+1} & \text{if $s_l(A) = -1$.}
\end{cases}
\]
\end{defn}

While we technically defined the ordering $\leq_E$ on the shift space for a single map $f$, observe that this definition only relies on the input data of the sign function $E$. Therefore, it makes sense to compare sequences from the shift spaces of two maps $f$ and $g$ with the same sign function $E$. With this in mind, we now define a partial order on the space of kneading data of maps with the same sign function.

Recall that if $f$ is an $m$-modal map, then the \textit{kneading data} of $f$ is the ordered tuple of critical itineraries of $f$:

\[
\calK_f = (\calK_1(f), \ldots, \calK_m(f)).
\]

\begin{defn}
Let $f, g$ be $m$-modal maps with the same sign function $E$. We say that $\calK_f \ll \calK_g$ if for all $j=1, \ldots, m$ we have

\begin{equation}\label{eqn:knead}
\begin{cases}
\calK_j(f) \leq_E \calK_j(g) & \text{if $E(j) = -1$, and}\\
\calK_j(f) \geq_E \calK_j(g) & \text{if $E(j) = +1$.}
\end{cases}
\end{equation}
\end{defn}

The following proposition shows that this ordering on kneading data has implications for the topological entropies of maps with the same sign function.

\begin{prop}[Corollary 4.5 in \cite{MTr}]\label{prop:kneadmono}
If $\calK_f \ll \calK_g$, then $\h(f) \leq \h(g)$.
\end{prop}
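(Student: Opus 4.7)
The plan is to invoke the Milnor--Thurston identification of topological entropy with the growth rate of lap numbers, and then translate the hypothesis $\calK_f \ll \calK_g$ into a direct comparison of those lap numbers (or, equivalently, into a coefficient comparison of the associated kneading determinants). Recall that for any $m$-modal map $h$, Misiurewicz--Szlenk gives $\h(h) = \lim_{n \to \infty} \tfrac{1}{n} \log \ell_n(h)$, where $\ell_n(h)$ is the number of maximal intervals of monotonicity of $h^n$. It therefore suffices to show that $\calK_f \ll \calK_g$ implies $\ell_n(f) \leq \ell_n(g)$ for every $n \geq 1$.

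First I would set up the bookkeeping. A lap of $h^n$ is an equivalence class of points in $I$ whose $h$-itineraries agree in the first $n$ entries, so $\ell_n(h) - 1$ equals the total number of critical values of $h, h^2, \ldots, h^{n-1}$, counted with multiplicity as they fall inside the interior of laps. Equivalently, $\ell_n(h) - 1 = \sum_{j=1}^m \#\{0 \leq i \leq n-2 : \sigma^i(\calK_j(h)) \in \Sigma_h^\circ\}$ for an appropriate ``open-lap'' subset $\Sigma_h^\circ$ of sequence space determined by $\calK_h$. The sign-twisted order $\leq_E$ is engineered so that the itinerary map $\It_h: I \to \Sigma_h$ is order-preserving for $\leq_E$; under this order, an interval of $I$ corresponds to an $\leq_E$-interval of sequences, bounded above and below by appropriate shifts of the kneading sequences. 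Consequently, the inequalities (\ref{eqn:knead}) say exactly that each $\leq_E$-interval of $\Sigma_f$ injects into a corresponding $\leq_E$-interval of $\Sigma_g$. Iterating $\sigma$ and counting preimages of the critical set then yields $\ell_n(f) \leq \ell_n(g)$, and the entropy inequality follows by taking $\tfrac{1}{n} \log$ and sending $n \to \infty$.

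The main obstacle is justifying the last sentence rigorously: that the positionwise inequalities $\calK_j(f) \leq_E \calK_j(g)$ or $\geq_E$ (depending on $E(j)$) aggregate into a global comparison of the number of times $\sigma^i(\calK_j)$ lands inside an open lap. The sign function enters twice -- once to orient the kneading inequalities and once to orient each lap boundary -- and these two sign conventions must cancel correctly, otherwise the comparison could flip on some laps and fail to produce a clean inequality $\ell_n(f) \leq \ell_n(g)$. This is exactly the combinatorial content of the kneading determinant in \cite{MTr}: the alternating signs inside its Leibniz expansion are matched to the sign-twists in Definition \ref{defn:lexico} so that the comparison survives. As a backup strategy I would, following Milnor--Thurston, reformulate the argument in terms of the kneading determinant $D_h(t)$ (whose smallest positive real root determines $\exp(-\h(h))$), and deduce the entropy inequality from a coefficient-wise comparison $D_g(t) \leq D_f(t)$ on $(0,1]$ combined with $D_f(0) = D_g(0) = 1$.
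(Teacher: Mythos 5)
The paper gives no proof of this proposition: it is stated as a direct citation of Corollary 4.5 of Milnor--Thurston (\cite{MTr}), so there is no in-paper argument to compare against; reproducing the citation was all that was required here.

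As a reconstruction of the Milnor--Thurston argument your sketch is in the right spirit but has a genuine gap, which you yourself flag and do not close. Reducing the entropy inequality to $\ell_n(f) \leq \ell_n(g)$ for all $n$ is standard; showing that $\calK_f \ll \calK_g$ actually implies that lap-number inequality (equivalently, that every word of length $n$ admissible as an $f$-itinerary is also $g$-admissible) is where all the content lies. That step requires verifying the admissibility inclusion by a careful induction through the sign-twisted order, tracking at each stage which of $\leq_E$ or $\geq_E$ governs and how the cumulative sign $s_i$ propagates; asserting that the two sign conventions ``must cancel correctly'' names the difficulty without resolving it. The backup route via the kneading determinant has the same status: the coefficientwise or pointwise comparison of kneading determinants you appeal to is precisely the nontrivial combinatorial step, not a free consequence of $\calK_f \ll \calK_g$. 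Two smaller points: your lap-count formula is phrased in terms of critical \emph{values} of $h,\dots,h^{n-1}$ falling inside open laps, whereas the cleaner bookkeeping counts preimages under $h^i$ of the critical \emph{points} of $h$; and you write $D_h(t)$ for the kneading determinant, but in this paper $D_f(t)$ already denotes the digit polynomial of Definition \ref{defn:DigitPoly}, a different object, so that notation should be avoided.
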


\subsection{Completing the proof of Theorem \ref{mainthm:mono}}\label{subsec:proofmono} In this section we synthesize our work from Sections \ref{sec:Farey} and \ref{sec:model} to prove that the stretch factor $\lambda(f)$ grows monotonically in the anti-rotation number $\Phi(f)$.

\begin{prop}\label{prop:Farey_itin}
For all $f \in \PA(m)$ we have

\[
\It_{f^L}(1) <_E \It_f(1) <_E \It_{f^R}(1).
\]
\end{prop}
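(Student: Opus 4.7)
The plan is to apply Theorem \ref{thm:Farey_transform} to write $\nu(f)$ in two distinct forms---one exposing $\nu(f^L)$, the other exposing $\nu(f^R)$---and then to compare $\It_f(1)$ with $\It_{f^L}(1)$ and $\It_{f^R}(1)$ at the first position where each pair of itineraries disagrees. The crucial observation will be Remark \ref{rmk:possign}: the cumulative sign $s_i$ stays equal to $+1$ throughout any initial segment $(m) \cdot \textbf{w}(\cdot)$ of a principal kneading sequence, so each $<_E$ comparison reduces to a plain comparison of symbols in $\calA$ at the first differing index.

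Explicitly, write $\Phi(f^L) = a/b$ and $\Phi(f^R) = c/d$, so that $\Phi(f^L) < \Phi(f^R)$ and $f = f^L \oplus f^R$. I first suppose that neither parent lies in $\{\textbf{0}_m, \textbf{1}_m\}$. Equation (\ref{eq:transform2}) then yields $\nu(f) = \widehat{\nu(f^R)} \cdot \nu(f^L) = (m) \cdot \textbf{w}(f^R) \cdot (m-2) \cdot \nu(f^L)$, so $\It_f(1)$ and $\It_{f^R}(1) = \nu(f^R)^\infty$ share the prefix $(m) \cdot \textbf{w}(f^R)$ of length $d-1$ and first disagree at index $d-1$: the former has symbol $m-2$ while the latter has the first entry $m-1$ of $\textbf{k}$. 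Since $s_{d-1} = +1$ and $m-2 <_E m-1$, this gives $\It_f(1) <_E \It_{f^R}(1)$. Symmetrically, equation (\ref{eq:transform1}) gives $\nu(f) = \overline{\nu(f^L)} \cdot \underline{\nu(f^R)}$, so that $\It_f(1)$ and $\It_{f^L}(1)$ share a prefix of length $b-1$ but disagree at index $b-1$, where $\It_f(1)$ has symbol $m$ and $\It_{f^L}(1)$ has $m-1$. A second application of the cumulative-sign fact together with $m-1 <_E m$ gives $\It_{f^L}(1) <_E \It_f(1)$.

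The boundary cases $f^L = \textbf{0}_m$ (equivalently $\Phi(f) = 1/n$) and $f^R = \textbf{1}_m$ (equivalently $\Phi(f) = (n-1)/n$) are handled analogously, using the explicit formulas for $\nu(f)$ supplied by Remark \ref{rmk:edges} and the itineraries $\It_{\textbf{0}_m}(1) = (m) \cdot (m-2)^\infty$ and $\It_{\textbf{1}_m}(1) = (m)^\infty$ from Definition \ref{def:0and1}. In each such case the two relevant itineraries agree on a prefix of length $n-1$ and first differ at the next index by the substitution of $m-1$ for either $m-2$ or $m$; the same sign-plus-order analysis then closes the argument.

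I do not anticipate any substantive obstacle beyond careful bookkeeping of prefix lengths. The cumulative-sign computation is immediate from Remark \ref{rmk:possign} and Example \ref{ex:sign}: every entry of $(m) \cdot \textbf{w}(\cdot)$ lies in $\{m, m-2\}$, and both of those symbols have $E$-value $+1$, which keeps the twisted lexicographic comparison transparent at every step.
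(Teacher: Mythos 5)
Your proposal is correct and follows essentially the same route as the paper's proof, which is the two-sentence application of Equation (\ref{eq:transform3}) and Remark \ref{rmk:possign}; you simply expand (\ref{eq:transform3}) back into the underlying transformation laws (\ref{eq:transform1})--(\ref{eq:transform2}) of Theorem \ref{thm:Farey_transform} and carry out the prefix comparison explicitly. The boundary cases via Remark \ref{rmk:edges} are handled as in the paper.
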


\begin{proof}
In the case that both $f^L$ and $f^R$ are in $\calF$, these inequalities follow directly from Equations (\ref{eq:transform3}) and (\ref{eq:possign}). For the special case where $f^L = \textbf{0}_m$ or $f^R = \textbf{1}_m$, we instead appeal to the formulas in Remark \ref{rmk:edges}.
\end{proof}

We are now ready to compare the kneading data of elements of $\PA(m)$.

\begin{prop}\label{prop:cutkneadmono}
Fix $m \geq 2$, and let $f$ and $g$ be distinct elements of $\PA(m)$. Then

\[
\Phi(f) < \Phi(g) \iff \It_f(1) <_E \It_g(1).
\]
\end{prop}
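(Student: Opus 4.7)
The plan is to reduce the comparison of itineraries to a chain of parent-to-child comparisons in the Farey tree $\calF$, leveraging Proposition \ref{prop:Farey_itin}. I would prove the forward direction $\Phi(f) < \Phi(g) \Rightarrow \It_f(1) <_E \It_g(1)$ first; the reverse then follows by trichotomy, since $\Phi$ is a bijection (Lemma \ref{fundlem:Rational}) so $f \neq g$ forces $\Phi(f) \neq \Phi(g)$, and if instead $\Phi(g) < \Phi(f)$ the forward direction would yield $\It_g(1) <_E \It_f(1)$, contradicting the hypothesis.

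For the forward direction, let $p = \Phi(f)$ and $q = \Phi(g)$ and let $r \in \calF$ be their least common ancestor in the rooted tree $\calF$. There are three cases to consider: $p = r$, $q = r$, or both $p$ and $q$ are proper descendants of $r$. In the last case, by definition of LCA together with the fact that $\calF(r_L)$ consists entirely of rationals strictly less than $r$ and $\calF(r_R)$ of rationals strictly greater than $r$, we have $p \in \calF(r_L)$ and $q \in \calF(r_R)$. The boundary cases where $p=r$ or $q=r$ are one-sided specializations of this.

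I would handle the full case in detail as follows. By Proposition \ref{prop:parseq} there is a sequence $p = t_0, t_1, \ldots, t_l = r$ with each $t_i = (t_{i-1})^R$ and a sequence $q = s_0, s_1, \ldots, s_k = r$ with each $s_i = (s_{i-1})^L$. Along the first chain, $t_{i-1}$ has $t_i$ as its right parent, so Proposition \ref{prop:Farey_itin} gives $\It_{\Phi^{-1}(t_{i-1})}(1) <_E \It_{\Phi^{-1}(t_i)}(1)$; iterating and using transitivity of $<_E$ yields $\It_f(1) <_E \It_{\Phi^{-1}(r)}(1)$. Along the second chain, $s_{i-1}$ has $s_i$ as its left parent, so Proposition \ref{prop:Farey_itin} yields the reverse inequality at each step, giving $\It_{\Phi^{-1}(r)}(1) <_E \It_g(1)$. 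Combining the two chains completes the case. The cases $p = r$ or $q = r$ are handled by running only one of the two chains.

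The main obstacle is essentially bookkeeping: making sure that the direction of each single-step inequality is oriented correctly —— a ``go to left parent'' step must translate to a strict \emph{decrease} in $<_E$ and a ``go to right parent'' step to a strict \emph{increase} —— and that the boundary cases where one of $f^L, f^R$ equals $\textbf{0}_m$ or $\textbf{1}_m$ require no extra care, since they are already subsumed into Proposition \ref{prop:Farey_itin} via Remark \ref{rmk:edges}. Once these sign conventions are pinned down, transitivity of $<_E$ does all the remaining work.
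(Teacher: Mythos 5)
Your proposal is correct and follows essentially the same argument as the paper: both reduce to the latest common ancestor $r$ of $\Phi(f)$ and $\Phi(g)$ in $\calF$, use Proposition \ref{prop:parseq} to produce chains of right parents from $\Phi(f)$ up to $r$ and of left parents from $\Phi(g)$ up to $r$, apply Proposition \ref{prop:Farey_itin} step by step, and chain the results by transitivity of $<_E$, with the reverse implication handled by the same trichotomy observation. The minor boundary cases you flag (when $r$ equals one of $\Phi(f)$, $\Phi(g)$, or when an endpoint of a chain hits $\mathbf{0}_m$ or $\mathbf{1}_m$) are treated in the same way the paper treats them.
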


\begin{proof}
Let $r$ be the latest common ancestor of $\Phi(f)$ and $\Phi(g)$, and set $h = \Phi^{-1}(r) \in \PA(m)$. 

Suppose that $\Phi(f) < \Phi(g)$. In the general case that $r$ is distinct from $\Phi(f)$ and $\Phi(g)$, we have $\Phi(f) \in \calF(r_L)$, implying by Proposition \ref{prop:parseq} that there exists a sequence of right parents

\[
s_0 = \Phi(f), s_1, \ldots, s_k = r
\]

\noindent connecting $\Phi(f)$ to $r$. Setting $f_i = \Phi^{-1}(s_i)$, we have $f_i^R = f_{i+1}$ for $i = 0, \ldots, k-1$. Proposition \ref{prop:Farey_itin} now implies that

\[
\It_f(f) <_E \It_{f_1}(1) <_E \cdots <_E \It_h(1).
\]

\noindent A similar argument involving left parents shows that $\It_h(1) <_E \It_g(1)$.

In the special case that $r=\Phi(f)$ or $\Phi(g)$, we have $f=h$ or $g=h$, respectively. From here we apply the same argument to obtain the same conclusion.

Suppose instead that $\Phi(f) \geq \Phi(g)$. Then in fact $\Phi(f) > \Phi(g)$, and repeating our above argument shows that $\It_f(1) >_E \It_g(1)$.
\end{proof}

\begin{prop}\label{prop:itmono}
For any $m \geq 2$, let $f$ and $g$ be distinct elements of $\PA(m)$. Then

\[
\calK_f \ll \calK_g \iff \It_f(1) <_E \It_g(1).
\]
\end{prop}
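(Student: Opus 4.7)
The plan is to unpack $\calK_f \ll \calK_g$ by case analysis using the explicit description of the kneading data from Remark \ref{rmk:kneaddata} (which shows $\calK_f$ is completely determined by $\It_f(1)$) together with the sign function $E$ from Example \ref{ex:sign}. In each case the compound condition across all $j=1,\ldots,m$ will collapse into the single inequality $\It_f(1) \leq_E \It_g(1)$, and then distinctness of $f$ and $g$ will upgrade the weak inequality to a strict one.

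First I would handle $m$ even, where $E(j) = (-1)^j$. For even $j$ we have $E(j) = +1$ and $\calK_j(f) = \calK_j(g) = 0^\infty$, so the required condition $\calK_j(f) \geq_E \calK_j(g)$ holds trivially. For odd $j$ we have $E(j) = -1$ and $\calK_j(\cdot) = \It_\cdot(1)$, so the required condition becomes $\It_f(1) \leq_E \It_g(1)$. Thus for $m$ even, $\calK_f \ll \calK_g$ is literally the single inequality $\It_f(1) \leq_E \It_g(1)$.

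Next I would handle $m$ odd, where $E(j) = (-1)^{j+1}$. For even $j$ (so $E(j) = -1$) we have $\calK_j(\cdot) = \It_\cdot(1)$, and the required condition is again $\It_f(1) \leq_E \It_g(1)$. For odd $j$ (so $E(j) = +1$) we have $\calK_j(\cdot) = 0 \cdot \It_\cdot(1)$ and the required condition is $0 \cdot \It_f(1) \geq_E 0 \cdot \It_g(1)$. The key observation here—this is the subtle step—is that prepending a common letter to two sequences reduces the comparison under $\leq_E$ to a comparison of the tails, with the ordering flipped if and only if the cumulative sign attached to that prepended letter is $-1$. Since $m$ is odd, Example \ref{ex:sign} gives $E(0) = -1$, so the cumulative sign after the initial common $0$ is $-1$, and the condition $0 \cdot \It_f(1) \geq_E 0 \cdot \It_g(1)$ becomes $\It_f(1) \leq_E \It_g(1)$. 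So in both parities and for every $j$, the condition collapses to the single inequality $\It_f(1) \leq_E \It_g(1)$.

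Finally, I would note that since $f$ and $g$ are distinct elements of $\PA(m)$, their itineraries $\It_f(1)$ and $\It_g(1)$ must differ: indeed $\It_f(1)$ is periodic and its first period $\nu(f)$ determines $\Phi(f)$ via Proposition \ref{prop:knead} and Lemma \ref{fundlem:Rational}, so equal itineraries would force $f = g$. Hence the weak inequality $\It_f(1) \leq_E \It_g(1)$ is equivalent to the strict inequality $\It_f(1) <_E \It_g(1)$, completing the proof. The only genuinely non-bookkeeping point is the sign-flip lemma for common prefixes in the twisted lexicographic order, which I expect to be the mild obstacle worth spelling out carefully.
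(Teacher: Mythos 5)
Your proof is correct and follows essentially the same case analysis as the paper, with the merit of spelling out the sign-flip argument for the common prefix explicitly (since $E(0)=-1$ when $m$ is odd, prepending $0$ reverses the comparison). You also have the $j$-parity labels in the right place for the $m$ odd case, whereas the paper's proof appears to transpose ``$j$ odd'' and ``$j$ even'' there --- a harmless typo since both conditions are present.
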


\begin{proof}
Suppose first that $m$ is even. Then $E(j) = (-1)^j$, so the conditions in (\ref{eqn:knead}) become

\[
\calK_f \ll \calK_g \iff \begin{cases}
\It_f(1) \leq_E \It_g(1) & \text{for $j$ odd, and}\\
0^\infty \geq_E 0^\infty & \text{for $j$ even.}
\end{cases}
\]

\noindent Since $f$ and $g$ are distinct elements of $\PA(m)$ they cannot have the same itinerary, and so the conclusion holds in this case.

Suppose instead that $m$ is odd. Then $E(j) = (-1)^{j+1}$, so the conditions in (\ref{eqn:knead}) are now

\[
\calK_f \ll \calK_g \iff \begin{cases}
\It_f(1) \leq_E \It_g(1) & \text{for $j$ odd, and}\\
0 \cdot \It_f(1) \geq_E 0 \cdot \It_g(1) & \text{for $j$ even.}
\end{cases}
\]

\noindent Since $E(0)=-1$, the conclusion again holds. 
\end{proof}

\begin{cor}\label{cor:itmono}
For any $m \geq 2$, let $f$ and $g$ be distinct elements of $\PA(m)$. Then

\[
\lambda(f) < \lambda(g) \iff \It_f(1) <_E \It_g(1).
\]
\end{cor}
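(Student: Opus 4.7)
The plan is to derive Corollary \ref{cor:itmono} as an essentially immediate consequence of Proposition \ref{prop:itmono} combined with the Milnor--Thurston monotonicity principle stated as Proposition \ref{prop:kneadmono}. The structure of the argument is a short two-way implication, with the only subtle point being the promotion of the non-strict inequality on entropy given by Milnor--Thurston to a strict inequality on stretch factors.

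For the forward direction $(\Leftarrow)$, I would begin by assuming $\It_f(1) <_E \It_g(1)$. By Proposition \ref{prop:itmono} this is equivalent to $\calK_f \ll \calK_g$, and then Proposition \ref{prop:kneadmono} yields $\h(f) \leq \h(g)$, i.e.\ $\lambda(f) \leq \lambda(g)$. To upgrade this to a strict inequality, I invoke the remark following Definition \ref{defn:PAB}: distinct elements of $\PA(m)$ have distinct slopes, since $f \in \PA(m)$ is determined by $\lambda(f)$. Because $f \ne g$ in $\PA(m)$, we have $\lambda(f) \neq \lambda(g)$, forcing $\lambda(f) < \lambda(g)$.

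For the reverse direction $(\Rightarrow)$, I would argue by trichotomy on $\It_f(1)$ and $\It_g(1)$ under $\leq_E$. If $\It_f(1) = \It_g(1)$, then the principal kneading sequences $\nu(f)$ and $\nu(g)$ agree, and since $\nu$ determines $\Phi$ (via Corollary \ref{cor:knead} and Lemma \ref{fundlem:Rational}) we would obtain $f = g$, contradicting $f \ne g$. If instead $\It_g(1) <_E \It_f(1)$, then the case already proved gives $\lambda(g) < \lambda(f)$, contradicting the hypothesis $\lambda(f) < \lambda(g)$. The only remaining possibility is $\It_f(1) <_E \It_g(1)$, as desired.

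The hard part is not really hard here, since all the machinery has been assembled in the preceding propositions. The one place that deserves care is the strictness step: Milnor--Thurston only delivers $\leq$, and the upgrade to $<$ relies on the fact that $\lambda$ separates points of $\PA(m)$. Once this is noted, the corollary falls out in two short implications.
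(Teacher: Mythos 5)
Your proposal is correct and takes essentially the same approach as the paper: $(\Leftarrow)$ chains Proposition \ref{prop:itmono} and Proposition \ref{prop:kneadmono} and then upgrades the non-strict entropy inequality to a strict one because distinct elements of $\PA(m)$ have distinct slopes, and $(\Rightarrow)$ is handled by trichotomy, exactly as the paper does when it observes that $\It_f(1) \geq_E \It_g(1)$ forces $\It_f(1) >_E \It_g(1)$ for $f \neq g$.
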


\begin{proof}
If $\It_f(1) <_E \It_g(1)$, then $\calK_f \ll \calK_g$ by Proposition \ref{prop:itmono}, hence by Proposition \ref{prop:kneadmono} we have

\[
\log{\lambda(f)} = \h(f) \leq h(g) = \log{\lambda(g)}.
\]

\noindent This inequality must in fact be strict, since elements of $\PA(m)$ have distinct slopes and hence distinct entropies. If $\It_f(1) \geq_E \It_g(1)$, then in fact $\It_f(1) > \It_g(1)$, and the same arguments imply $\lambda(f) > \lambda(g)$.
\end{proof}

\section{The digit polynomial as other invariants}\label{sec:poly_invts}

We conclude by showing that the digit polynomial of a map $f \in \PA(m)$ recovers several well-known invariants. These include:

\begin{enumerate}
\item the \textit{strong Markov polynomial} $\chi_M(f; t)$ of $f$ (Section \ref{subsec:Markov}), 
\item the \textit{Artin-Mazur zeta function} $\zeta_f(t)$ of $f$ (Section \ref{subsec:zeta}), 
\item the \textit{homology and symplectic polynomials} of $\psi_f$ (Section \ref{subsec:BBK}), and
\item the \textit{symplectic Burau polynomial} $\chi_\beta(-1, t)$ of any braid representative $\beta$ for $\psi_f$ (Section \ref{subsec:Burau}).
\end{enumerate}

Importantly, all of these polynomials are essentially the characteristic polynomial of the \textit{strong Markov matrix} for $f$, which may be understood as the transition matrix for $\psi_f$ acting on a particular invariant train track.

\begin{mainthm}
Let $f \in \PA(m)$ and set $n = |\PC(f)| = 1+\deg(D_f)$. Set $\lambda = \lambda(f) = \lambda(\psi_f)$. 

\begin{enumerate}
\item \label{poly_invts:zeta} The digit polynomial determines the Artin-Mazur zeta function of $f$: 

\[
\zeta_f(t) = \frac{1}{\calR(D_f(t))} = \frac{1}{\calR(\det(tI-W_f))}.
\]

\item \label{poly_invts:chi} The digit polynomial is equal to the strong Markov polynomial of $f$:

\[
D_f(t) = \chi_M(f; t).
\]

\item \label{poly_invts:hom_1} The digit polynomial determines the homology, symplectic, and puncture polynomials of $\psi_f$:

\[
D_f(t) = h(\psi_f; t) = \begin{cases}
s(\psi_f; t) & \text{if $n$ is odd}\\
s(\psi_f; t)(t+1) & \text{if $n$ is even.}
\end{cases}
\]

\item \label{poly_invts:hom_2} Let $\bbS$ be the surface obtained from the orientation cover of $\psi_f$ by filling in the lifts of the punctured $1$-prong singularities of $\psi_f$. Denote by $\chi_+(t)$ (resp., $\chi_-(t)$) the characteristic polynomial of $\psi_+$ (resp., $\psi_-$) acting on $H_1(\bbS; \bbZ)$. Then

\[
D_f(t) = \chi_+(t) \hspace{5mm} \text{and} \hspace{5mm} D_f(-t) = \chi_-(t).
\]


\item \label{poly_invts:Burau} Let $\beta_f$ be any $n$-braid representative of $\psi_f$ obtained by ripping open $\infty \in S_{0,n+1}$ to a boundary circle. Let $\bbB(\beta_f, z)$ denote the reduced Burau matrix for $\beta_f$, and set $\chi(\beta_f; t) = \det(tI-\bbB(\beta_f, -1))$. Then

\[
\chi(\beta_f; t) = \begin{cases}
D_f(t) & \text{if $\chi(\beta_f; t)$ has $\lambda$ as a root}\\
D_f(-t) & \text{if $\chi(\beta_f; t)$ has $-\lambda$ as a root.}
\end{cases}
\]

\noindent Moreover, composing $\beta_f$ with the full twist $\Delta_n^2$ negates the sign of the variable $t$.
\end{enumerate}
\end{mainthm}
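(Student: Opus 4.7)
The plan is to pivot the whole argument on part (\ref{poly_invts:chi}), the identification $D_f(t) = \chi_M(f;t)$, and then obtain (\ref{poly_invts:zeta}), (\ref{poly_invts:hom_1}), (\ref{poly_invts:hom_2}), (\ref{poly_invts:Burau}) from it by standard machinery.

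First I would set up the strong Markov partition explicitly: cut $I$ at $\PC(f) \cup \{0\}$ to obtain $n = |\PC(f)|$ subintervals $J_1 < \cdots < J_n$, and form the integer transition matrix $W_f$ whose $(i,j)$ entry counts how many times $f(J_i)$ covers $J_j$. Because $f$ is a zig-zag and $\PC(f)$ is precisely the orbit of $x=1$, every row of $W_f$ is a block of consecutive $1$'s whose endpoints are controlled by the recursion in (\ref{eqn:recur1})--(\ref{eqn:recur2}). The claim $\det(tI - W_f) = D_f(t)$ then reduces to comparing the row-by-row expansion of this determinant with the telescoping identity that produces $D_f$ in the definition of the digit polynomial in Section \ref{subsection:Digit}: both record the same linear recursion for the orbit of $1$, with the integer coefficients $c_i$ of Fundamental Lemma \ref{fundlem:Digit} appearing as the subdiagonal contributions. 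I expect this to be the main calculation.

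Given part (\ref{poly_invts:chi}), part (\ref{poly_invts:zeta}) is immediate from the Bowen--Lanford formula $\zeta_f(t) = 1/\det(I - tW_f)$, since $\det(I - tW_f) = \calR(\det(tI - W_f)) = \calR(D_f(t))$, and the reciprocity statement in Fundamental Lemma \ref{fundlem:Digit} gives $\calR(D_f) = D_f$. For part (\ref{poly_invts:hom_1}), the invariant train track $\tau$ for $\psi_f$ consists of the expanding edges $J_i$ together with an infinitesimal loop at each point of $\PC(f)$; the transition matrix on edges has the same characteristic polynomial as $W_f$ up to the trivial action on infinitesimal loops, and the induced action on $H_1(S_{0,n+1}; \bbQ)$ has characteristic polynomial $D_f(t)$. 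The parity-dependent correction factor $(t+1)$ for the symplectic polynomial when $n$ is even arises from an orientation class that contributes an eigenvalue $-1$; this matches the Birman--Brinkmann--Kawamuro framework of \cite{BBK}.

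Parts (\ref{poly_invts:hom_2}) and (\ref{poly_invts:Burau}) will use the orientation double cover. For (\ref{poly_invts:hom_2}), I would decompose $H_1(\bbS; \bbQ)$ into the $\pm 1$-eigenspaces of the deck involution. The lift $\psi_+$ acts on the $+$-eigenspace by the same matrix as $\psi_f$ on $H_1(S_{0,n+1})$, yielding $\chi_+(t) = D_f(t)$, while on the $-$-eigenspace the action picks up the sign function $E$ recorded in Section \ref{subsec:lexico}; since the $E$-values on adjacent intervals alternate along the expanding part of $\tau$, this alternation translates precisely into the substitution $t \mapsto -t$, giving $\chi_-(t) = D_f(-t)$. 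For (\ref{poly_invts:Burau}), I would invoke Squier's identification of the reduced Burau representation at $z = -1$ with the symplectic action on the homology of the surface branched over the punctures. The two sign cases correspond to the two lifts $\psi_\pm$ of $\psi_f$ to $\bbS$, and composition with $\Delta_n^2$ corresponds to the hyperelliptic involution, which swaps the two lifts and hence negates $t$. The main obstacle throughout is part (\ref{poly_invts:chi})'s combinatorial bookkeeping; after that, the trickiest remaining point will be verifying the sign-tracking in (\ref{poly_invts:hom_2}) that turns $E$ into the substitution $t \mapsto -t$, which requires a careful matching of the cumulative sign vector with the $\bbZ/2$-grading on the double cover.
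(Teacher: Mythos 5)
Your overall framework is reasonable, but there are several genuine conceptual gaps that would derail the argument.

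First, a bookkeeping issue that propagates: cutting $I$ at $\PC(f)$ gives $n-1$ subintervals, not $n$ (both $0$ and $1$ lie in $\PC(f)$ for $f \in \PA(m)$, which is why $\deg D_f = n-1$). You also use $W_f$ for the strong Markov matrix, whereas in the paper $W_f$ denotes the \emph{weak} Markov matrix of size $n+m-2$; the paper's part (\ref{poly_invts:zeta}) really says $\calR(\chi_W(f;t)) = \calR(D_f(t))$, which forces $\chi_W(f;t) = t^{m-1}D_f(t)$ and only then $\chi_M(f;t) = D_f(t)$. On logical order, you plan to prove (\ref{poly_invts:chi}) directly by a row-by-row determinant expansion and deduce (\ref{poly_invts:zeta}); the paper goes the other way, establishing (\ref{poly_invts:zeta}) by matching $1 - \phi_f(t)$ with $\calR(D_f(t))$ (Proposition \ref{prop:digitpolyeq}) and invoking Suzuki's zeta-function theorem, then deducing (\ref{poly_invts:chi}) via $\chi_W = t^{r-n}\chi_M$. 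Your inversion is fine in principle, but you have not actually carried out the determinant computation, which you yourself flag as ``the main calculation'' — so this is an unproven claim rather than a proof.

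The more serious problems are in (\ref{poly_invts:hom_1}) and (\ref{poly_invts:hom_2}). For (\ref{poly_invts:hom_1}), the BBK homology polynomial is the characteristic polynomial of $\psi_f$ acting on the weight space $W(G,f)$ of the train track, \emph{not} on $H_1(S_{0,n+1};\bbQ)$; those spaces differ in general, and the proof needs an explicit basis of $W(G,f)$ on which the action matches $M_f$. You also silently assume our train track is already of the type BBK require, but it is not: the paper must first apply a folding move at each non-extremal loop to obtain a Bestvina--Handel track before the BBK dimension count applies. For (\ref{poly_invts:hom_2}) you have the decomposition upside-down: both $\chi_+(t)$ and $\chi_-(t)$ are computed on the \emph{same} space $H_1(\bbS) \cong V_-(\iota)$ (filling in the lifted $1$-prongs kills the $V_+ \cong H_1(S)$ summand), and the subscripts $\pm$ distinguish the two lifts $\psi_\pm$, not the $\pm 1$-eigenspaces of the involution. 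The substitution $t \mapsto -t$ comes from $\psi_- = \iota \circ \psi_+$ and $\iota|_{V_-} = -\mathrm{id}$ (this is Proposition \ref{prop:cover}(\ref{cover:char}), following Band--Boyland and BBK), not from any alternation of the sign function $E$ on the expanding edges — that heuristic would not produce a valid argument. Your sketch of (\ref{poly_invts:Burau}) is roughly in the right spirit (orientation double cover, the two lifts, $\Delta_n^2$ negating $t$), but it inherits the same $\pm$ confusion and omits the key input, Band--Boyland's Theorem 5.1, which identifies $D_n^{(2)}$ as the orientation double cover when every interior puncture is a $1$-prong.
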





\subsection{Markov partitions}\label{subsec:Markov}

From a postcritically finite interval map $f$ one obtains a natural partition by cutting $I$ at the \textit{weak postcritical set} of $f$:

\[
\WPC(f) : = \PC(f) \cup \{\text{the critical points of f}\}.
\]

\noindent We refer to the resulting partition of $I$ as the \textit{weak Markov partition} of $f$. If $|\WPC(f)|=r$ then from the resulting subintervals $I_1, \ldots, I_{r-1}$ we form the \textit{weak Markov matrix} $W_f$ of $f$ as follows:

\[
(W_f)_{i,j} = \begin{cases}
1 & \text{if $\overline{f(I_j)} \supseteq \overline{I_i}$}\\
0 & \text{otherwise.}
\end{cases}
\]

It is well known that the spectral radius of $W_f$ is $\lambda = e^{\h(f)}$. The Perron-Frobenius theorem says that $\lambda$ is also an eigenvalue of $W_f$, and therefore a root of the \textit{weak Markov polynomial} of $f$:

\[
\chi_W(f; t) = \det(tI - W_f).
\]

While the weak Markov partition has several nice properties, it is larger than necessary if some critical point of $f$ is not in $\PC(f)$. If we only cut $I$ at the elements of $\PC(f)$ then we obtain the \textit{strong Markov partition} of $f$. The corresponding \textit{strong Markov matrix} $M_f$ of $f$ is defined analogously to $W_f$:

\[
(M_f)_{i,j} = \text{the number of times $\overline{f(J_j)}$ traverses $\overline{J_i}$.}
\]

The \textit{strong Markov polynomial} of $f$ is $\chi_M(f; t) = \det(tI - M_f)$. An exercise in linear algebra demonstrates the following relationship.

\begin{prop}\label{prop:Markov_poly}
Set $r = |\WPC(f)|$ and $n = |\PC(f)|$. Then $\chi_W(f; t) = t^{r-n} \chi_M(f; t)$. In particular, $e^{\h(f)}$ is a root of $\chi_M(f; t)$.
\end{prop}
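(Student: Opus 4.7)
The plan is to exploit the block structure of $W_f$ induced by grouping weak intervals according to the strong interval they refine. The key geometric observation will be that for any weak interval $I$, the image $f(I)$ is a union of whole strong intervals. Indeed, $f|_I$ is monotone (since $I$ is cut at all critical points of $f$), so $f(I)$ is an interval, and its endpoints $f(\partial I)$ lie in $f(\WPC(f)) \subseteq f(\PC(f)) \cup \{\text{critical values of } f\} \subseteq \PC(f)$, using that $\PC(f)$ is forward $f$-invariant and contains every critical value. Consequently, for any two weak subintervals $I, I'$ of the same strong interval $J$, and any weak interval $I''$, one has $\overline{f(I'')} \supseteq \overline{I} \iff \overline{f(I'')} \supseteq \overline{I'}$. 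In other words, all rows of $W_f$ indexed by weak intervals lying in the same strong interval are identical.

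I would then index the weak intervals by pairs $(a,b)$, where $a \in \{1, \ldots, n-1\}$ labels the strong interval $J_a$ and $b$ ranges over the weak subintervals inside $J_a$. The identical-rows structure then allows the factorization $W_f = E \widetilde{W}$, where $\widetilde{W}$ is the $(n-1) \times (r-1)$ matrix whose $a$-th row is the common row of $W_f$ for group $a$, and $E$ is the $(r-1) \times (n-1)$ replication matrix with $E_{(a,b),\, a'} = \delta_{a, a'}$. A direct computation then gives
\[
(\widetilde{W} E)_{a,c} = \sum_{d} [\,\overline{f(I_{c,d})} \supseteq \overline{J_a}\,] = (M_f)_{a,c},
\]
the last equality holding because $\overline{f(J_c)} = \bigcup_d \overline{f(I_{c,d})}$ and each piece $\overline{f(I_{c,d})}$ contributes one traversal of $\overline{J_a}$ precisely when it contains $\overline{J_a}$ entirely, by the first observation. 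Thus $\widetilde{W} E = M_f$.

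The final step is the Weinstein--Aronszajn (Sylvester) determinant identity, applied to $A = E$ of size $(r-1) \times (n-1)$ and $B = \widetilde{W}$ of size $(n-1) \times (r-1)$:
\[
t^{n-1} \det(tI_{r-1} - E\widetilde{W}) = t^{r-1} \det(tI_{n-1} - \widetilde{W} E),
\]
which simplifies to $\chi_W(f; t) = t^{r-n} \chi_M(f; t)$. The ``in particular'' statement then follows since $e^{\h(f)}$ is a positive eigenvalue of $W_f$, hence a nonzero root of $\chi_W(f; t)$, and the factor $t^{r-n}$ vanishes only at $t=0$.

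The main obstacle will be justifying the identical-rows observation cleanly; once that is in hand, the remaining argument is routine linear algebra. One should also pay attention to the edge cases where $\PC(f)$ might not contain the endpoints of $I$, though for the zig-zag maps considered throughout the paper this is never an issue since $0$ and $1$ are both critical values.
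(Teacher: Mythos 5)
Your proof is correct and is precisely the ``exercise in linear algebra'' the paper alludes to without writing out. The key observation that $f$ sends each weak interval onto a union of whole strong intervals (because $f(\WPC(f)) \subseteq \PC(f)$ and $f$ is monotone on weak intervals) gives the identical-rows structure, the factorization $W_f = E\widetilde{W}$ with $M_f = \widetilde{W}E$ follows, and the Sylvester/Weinstein--Aronszajn identity delivers $\chi_W(f;t) = t^{r-n}\chi_M(f;t)$; your caveat about the endpoints $0,1$ is harmless here since for $f \in \PA(m)$ both are critical values and hence lie in $\PC(f)$.
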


In \cite{F} the author shows that $M_f$ is the more natural matrix to consider for an interval map of pseudo-Anosov type. Indeed, the points of $\PC(f)$ correspond to the $1$-prong singularities of $\psi_f$, and the subintervals between these points become the expanding edges of a train track for $\psi_f$. In particular $M_f$ is also a Markov transition matrix for $\psi_f$. Thus, the dynamics of $f$ determine the dynamics of $\psi_f$ as a Bernoulli system.

\subsection{The reverse of a polynomial}

The \textit{reverse} of a polynomial $p(t) \in \bbC[t]$ is

\[
\calR(p(t)) = t^{\deg(p)} p(t^{-1}).
\]

\noindent The polynomial $p(t)$ is \textit{reciprocal} or \textit{symmetric} if $\calR(p(t)) = p(t)$. 

\begin{rmk}
Note that $D_f(t)$ is reciprocal, by Lemma \ref{fundlem:Digit}.
\end{rmk}

We will need the following fact.

\begin{prop}\label{prop:rev_poly}
For any $f, g \in \bbC[t]$ we have

\[
\calR(f(t)) = \calR(g(t)) \iff f(t) = t^{\deg(f)-\deg(g)} g(t).
\]
\end{prop}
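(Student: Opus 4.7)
The plan is to verify both implications directly from the definition $\calR(p(t)) = t^{\deg(p)} p(t^{-1})$. Assume without loss of generality that $\deg(f) \geq \deg(g)$, and write $n = \deg(f)$, $m = \deg(g)$, $k = n - m \geq 0$; the other case is symmetric after swapping the roles of $f$ and $g$.

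For the $(\Leftarrow)$ direction I would suppose $f(t) = t^{k} g(t)$. Since this forces $\deg(f) = k + m = n$ (matching our assumption), a one-line computation suffices:
\[
\calR(f)(t) = t^n f(t^{-1}) = t^{n} \cdot t^{-k} g(t^{-1}) = t^{m} g(t^{-1}) = \calR(g)(t).
\]

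For the $(\Rightarrow)$ direction I would start from $\calR(f)(t) = \calR(g)(t)$, i.e.\ $t^n f(t^{-1}) = t^m g(t^{-1})$, and divide through by $t^m$ to obtain $t^{k} f(t^{-1}) = g(t^{-1})$ as a polynomial identity in $t$. Substituting $t \mapsto t^{-1}$ and clearing the resulting negative power yields $f(t) = t^{k} g(t)$, as desired.

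The only real subtlety is bookkeeping for the degree hypothesis: the relation $f(t) = t^{\deg(f)-\deg(g)} g(t)$ is a \emph{polynomial} identity precisely when $\deg(f) \geq \deg(g)$, which is why the WLOG reduction is needed. There is no genuine obstacle — the argument reduces to a pair of direct substitutions — but care is required to ensure the substitution $t \mapsto t^{-1}$ produces a polynomial on both sides before equating coefficients, which is handled by multiplying through by an appropriate power of $t$ first.
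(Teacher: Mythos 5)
The paper states Proposition \ref{prop:rev_poly} without giving a proof, so there is nothing to compare against. Your direct verification from the definition $\calR(p(t)) = t^{\deg p}\,p(t^{-1})$ is correct, and the WLOG reduction is justified because both sides of the biconditional are symmetric under interchanging $f$ and $g$ (if $f(t) = t^{\deg f - \deg g} g(t)$ then $g(t) = t^{\deg g - \deg f} f(t)$, and $\calR(f) = \calR(g)$ is manifestly symmetric).
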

\subsection{Zeta functions of zig-zag maps}\label{subsec:zeta}

In this section we prove statements (\ref{poly_invts:zeta}) and (\ref{poly_invts:chi}) of Theorem \ref{mainthm:poly_invts}.

Suppose that $f: X \to X$ is a dynamical system such that the sets

\[
\Fix(f^i) = \{x \in X: f^i(x) = x\}
\]

\noindent are finite for each $i \geq 1$. Then the \textit{Artin-Mazur zeta function} of $f$ is the formal power series

\[
\zeta_f(t) = \exp \left ( \sum_{i=1}^\infty \frac{|\Fix(f^i)|}{i} \cdot t^i \right ).
\]

Let $f$ be a $\lambda$-zig-zag map with $m$ critical points. We say $f$ is \textit{simple} if for some minimal $N \geq 1$, $f^N(1)$ is a critical point:

\[
\text{$f^N(1) = \frac{k}{\lambda}$ \hspace{5mm} for some $1 \leq k \leq m$.}
\]

\begin{rmk}
Note that each $f \in \PA(m)$ is simple with $k=1$.
\end{rmk}

If $f$ is simple, then the \textit{coefficients} of $f$ are the integers $c_i$ for $0 \leq i \leq N-1$ such that

\[
c_i(f) = \begin{cases}
A(f^i(1)) & \text{if $E(A(f^i(1)))=+1$}\\
A(f^i(1))+1 & \text{if $E(A(f^i(1)))=-1$.}
\end{cases}
\]

\noindent Furthermore, we set $s_i(f) = s_i(\It_f(1))$, the cumulative signs of the itinerary of $f$. Finally, define the polynomials

\[
\rho_f(t) = k \cdot s_N(f) \cdot  t^N + \sum_{i=0}^{N-1} s_i(f) c_i(f) t^i
\]

\noindent and 

\[
\phi_f(t) = t \cdot \rho_f(t).
\]

\begin{prop}\label{prop:digitpolyeq}
Suppose $f \in \PA(m)$. Then 

\[
1 - \phi_f(t) = \calR(D_f)(t).
\]
\end{prop}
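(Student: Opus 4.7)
Both sides of the identity are polynomials of degree $b+1$ (where $\Phi(f) = a/b$), so I would prove the proposition by a straightforward expansion: unpack the definitions of $\phi_f$ and $\calR(D_f)$ into explicit polynomials whose coefficients are read off from the principal kneading sequence $\nu(f)$, and check term-by-term agreement. Since $D_f$ is reciprocal by Lemma \ref{fundlem:Digit}, we have $\calR(D_f) = D_f$, but it is still convenient to compute $\calR(D_f)$ directly, since the coefficients of $\phi_f$ naturally match the \emph{reversed} coefficients of $D_f$.

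First I would identify the parameters of Proposition \ref{prop:digitpolyeq}'s setup. Since $\nu_b(f) = k_1$ by Corollary \ref{cor:knead}, $f$ is simple with $N = b$ and $k = 1$, so
\[
\rho_f(t) = s_b(f)\, t^b + \sum_{i=0}^{b-1} s_i(f)\, c_i(f)\, t^i.
\]
Next I would compute the cumulative signs and coefficients. Corollary \ref{cor:knead} gives $\nu_i(f) \in \{m-2, m\}$ for $0 \leq i \leq b-2$ and $\nu_{b-1}(f) = m-1$. Example \ref{ex:sign} (or direct computation) shows that $E(m) = E(m-2) = +1$ and $E(m-1) = -1$, regardless of the parity of $m$. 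Therefore Remark \ref{rmk:possign} gives $s_i(f) = +1$ for $0 \leq i \leq b-1$, while $s_b(f) = E(m-1) \cdot s_{b-1}(f) = -1$. By the definition of $c_i(f)$, the positive-sign entries contribute $c_i(f) = \nu_i(f)$ for $0 \leq i \leq b-2$, while the negative-sign entry contributes $c_{b-1}(f) = (m-1) + 1 = m$. Substituting into $\rho_f$ and multiplying by $t$ yields
\[
\phi_f(t) = -t^{b+1} + m\, t^b + \sum_{i=0}^{b-2} \nu_i(f)\, t^{i+1}.
\]

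Finally I would compute $\calR(D_f)(t)$. Proposition \ref{prop:knead} gives $c_{i+1}^{\,DP} = \nu_i(f)$ for $0 \leq i \leq b-2$, and the remaining coefficient is obtained from Equation (\ref{eqn:coefrelations}) together with the recursions (\ref{eqn:recur1})/(\ref{eqn:recur2}): at $i = b$ we have $x_b = m - \lambda x_{b-1}$, so $c_b^{\,DP} = \lambda x_{b-1} + x_b = m$ in both parities of $m$. Thus
\[
D_f(t) = t^{b+1} + 1 - m\, t - \sum_{j=0}^{b-2} \nu_j(f)\, t^{b-j},
\]
and reversing gives
\[
\calR(D_f)(t) = 1 + t^{b+1} - m\, t^b - \sum_{j=0}^{b-2} \nu_j(f)\, t^{j+1},
\]
which is exactly $1 - \phi_f(t)$. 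The only mildly subtle step is identifying the ``boundary'' coefficient $c_b^{\,DP} = m$ (which Proposition \ref{prop:knead} does not record directly), but this falls out of a one-line computation from the recursions governing $x_b$, so there is no serious obstacle; the proof is almost entirely bookkeeping once the relevant sign facts about $E$ on $\{m-2, m-1, m\}$ are in place.
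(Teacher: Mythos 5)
Your proof is correct and takes essentially the same approach as the paper: expand both $\phi_f$ and $\calR(D_f)$ coefficient-by-coefficient using the sign computations from Example \ref{ex:sign} and Remark \ref{rmk:possign} and the identification of coefficients from Proposition \ref{prop:knead}, then match term by term. The only cosmetic difference is that you derive the boundary coefficient $c_b = m$ directly from the recursions and Equation (\ref{eqn:coefrelations}), whereas the paper reads it off from Lemma \ref{fundlem:Digit}; both routes are valid and equally short.
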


\begin{proof}
Write $\Phi(f) = \frac{a}{b}$, and $\lambda = \lambda(f)$. Then $N=b$, with $f^b(1)=\frac{1}{\lambda}$. Furthermore, Remark \ref{rmk:possign} shows that $s_j(f) = +1$ for $0 \leq j \leq b-1$, whereas $s_b(f) = s_b(\It_f(1)) = -1$. Therefore,

\[
1-\phi_f(t) = 1-t \rho_f(t) = 1+t^{b+1} - \sum_{i=1}^b c_{i-1}(f) t^i.
\]

On the other hand, we know that

\[
D_f(t) = t^{b+1} + 1 - \sum_{i=1}^b c_i t^{b+1-i},
\]

\noindent where the $c_i$ satisfy the relations in (\ref{eqn:coefrelations}). Taking the reverse gives 

\[
\calR(D_f)(t) = t^{b+1}+1-\sum_{i=1}^b c_i t^i,
\]

\noindent so it remains to show that $c_i = c_{i-1}(f)$ for $1 \leq i \leq b$. Indeed, for $1 \leq i \leq b-1$ we have

\[
c_i = \nu_{i-1}(f) = A(f^{i-1}(1)) = c_{i-1}(f),
\]

\noindent while Lemma \ref{fundlem:Digit} tells us that

\[
c_b = m = (m-1)+1 = c_{b-1}(f).
\]
\end{proof}

Combining Proposition \ref{prop:digitpolyeq} above and Theorem 1.1 of \cite{S} gives the following:

\begin{prop}\label{prop:BLzeta}
Fix $f \in \PA(m)$. Then $\zeta_f(t)$ converges absolutely in $|t| < \lambda(f)^{-1}$. In addition, for $|t| < \lambda(f)^{-1}$ we have

\begin{equation}\label{eqn:zeta}
\zeta_f(t) = \frac{1}{\calR(D_f(t))} = \frac{1}{\det(I-tW_f)}.
\end{equation}

\noindent In particular,

\begin{equation}\label{eqn:charanddigit}
\chi_W(f; t) = t^{m-1} D_f(t).
\end{equation}
\end{prop}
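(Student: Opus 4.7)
The plan is to combine the Artin--Mazur-type formula of Baladi--Ruelle (Theorem 1.1 of \cite{S}) with the polynomial identity already proved in Proposition \ref{prop:digitpolyeq}, and then extract the weak Markov statement via a degree count and a reciprocity comparison.

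First I would invoke Theorem 1.1 of \cite{S}, which for a simple uniform expander expresses the zeta function as $\zeta_f(t) = 1/(1 - \phi_f(t))$, with absolute convergence in the disc $|t| < \lambda(f)^{-1}$. Substituting the identity $1 - \phi_f(t) = \calR(D_f(t))$ from Proposition \ref{prop:digitpolyeq} gives at once the first equality $\zeta_f(t) = 1/\calR(D_f(t))$. The second equality $\zeta_f(t) = 1/\det(I - tW_f)$ is the classical Bowen--Lanford formula applied to the Markov partition coming from the weak postcritical set: since the subintervals cut by $\WPC(f)$ form a Markov partition for $f$ and $W_f$ is its transition matrix, the periodic orbits of $f$ are counted (up to a boundary correction that does not affect the zeta function) by $\mathrm{tr}(W_f^n)$, and the standard manipulation $\exp\sum_n \mathrm{tr}(W_f^n) t^n/n = 1/\det(I - tW_f)$ yields the claim; this is also packaged inside Theorem 1.1 of \cite{S}.

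Next, to extract $\chi_W(f;t) = t^{m-1} D_f(t)$ from the two expressions just obtained, I would equate their denominators:
\[
\calR(D_f(t)) = \det(I - tW_f).
\]
The right-hand side equals $\calR(\chi_W(f;t))$ because $\det(I - tW_f) = t^{\dim W_f} \chi_W(f; t^{-1})$. By Proposition \ref{prop:Markov_poly} we have $\chi_W(f;t) = t^{r-n} \chi_M(f;t)$ where $r = |\WPC(f)|$ and $n = |\PC(f)|$. Inspecting the principal kneading sequence via Corollary \ref{cor:knead} shows that the only critical point ever appearing in $\PC(f)$ is $k_1$ (the terminal symbol of $\nu(f)$), so $r - n = m - 1$. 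Hence $\deg \chi_W - \deg D_f = m - 1$, and Proposition \ref{prop:rev_poly} applied to the reciprocal equality above forces $\chi_W(f;t) = t^{m-1} D_f(t)$. As a byproduct this also yields $\chi_M(f;t) = D_f(t)$, which is statement (2) of Theorem \ref{mainthm:poly_invts}.

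The only real obstacle I anticipate is bookkeeping: I need to verify that the polynomial $1 - \phi_f(t)$ as defined in Section \ref{subsec:zeta} matches the object appearing in Theorem 1.1 of \cite{S} (in particular that the sign conventions, the treatment of the terminal critical-point landing, and the choice of coefficients $c_i(f)$ align), so that the substitution through Proposition \ref{prop:digitpolyeq} is a genuine identity throughout the disc of convergence rather than just at the formal power-series level. Once that verification is made, the rest of the argument is a short chain of polynomial manipulations.
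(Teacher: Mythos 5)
Your proposal is correct and follows essentially the same route as the paper's proof: invoke Theorem 1.1 of \cite{S} together with Proposition \ref{prop:digitpolyeq} to identify $\zeta_f(t)$ with both $1/\calR(D_f(t))$ and $1/\det(I-tW_f)$, deduce $\calR(D_f(t)) = \calR(\chi_W(f;t))$, and then extract (\ref{eqn:charanddigit}) from Proposition \ref{prop:rev_poly} via a degree count. The paper is terser on the final step, stating directly that $\deg(D_f) = b+1$ and $\dim(W_f) = b+1+(m-1)$, whereas you route the degree count through Proposition \ref{prop:Markov_poly} and the observation from Corollary \ref{cor:knead} that $k_1$ is the only critical point lying in $\PC(f)$; these are the same calculation, yours being slightly more self-contained. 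Your remark that $\chi_M(f;t)=D_f(t)$ falls out as a byproduct is also how the paper proceeds, packaging it as Corollary \ref{cor:chardigit}.
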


\begin{proof}
Since $f$ is continuous, Suzuki's argument in the proof of Theorem 1.1 in \cite{S} verifies (\ref{eqn:zeta}). Therefore,

\[
\calR(D_f(t)) = \calR(\det(tI-W_f)) = \calR(\chi_W(f; t)).
\]

\noindent Since $\deg(D_f) = b+1$ and $\dim(W_f) = b+1 + (m-1)$, Proposition \ref{prop:rev_poly} implies formula (\ref{eqn:charanddigit}).
\end{proof}

\begin{cor}\label{cor:chardigit}
Fix $f \in \PA(m)$. Then $\chi_M(f; t) = D_f(t)$.
\end{cor}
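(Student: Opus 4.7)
The plan is to combine the two identities already on the table: Proposition \ref{prop:Markov_poly} gives $\chi_W(f;t)=t^{r-n}\chi_M(f;t)$, while Proposition \ref{prop:BLzeta} gives $\chi_W(f;t)=t^{m-1}D_f(t)$. So the whole corollary reduces to the arithmetic identity $r-n=m-1$, after which dividing by $t^{m-1}$ on both sides of $t^{m-1}D_f(t)=t^{m-1}\chi_M(f;t)$ gives the result (this division is legal because it is an equality of polynomials in $\mathbb{Z}[t]$, not a rational-function identity).

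Thus the real work is to count $r-n=|\WPC(f)|-|\PC(f)|$, i.e.\ to decide which of the $m$ critical points $k_1,\ldots,k_m$ already lie in $\PC(f)$. I would read this off the principal kneading sequence. By Definition \ref{def:prin} and Corollary \ref{cor:knead}, the entries of $\nu(f)=(\nu_0,\ldots,\nu_{b(m)-1})$ take values in $\{m,m-2,m-1,k_1,0\}$; in particular $\nu_b(f)=k_1$, which means $f^b(1)=k_1$ and hence $k_1\in\PC(f)$. On the other hand, none of the addresses appearing in $\nu(f)$ equal $k_2,\ldots,k_m$, so none of these critical points appears in the forward orbit of any critical value. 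Therefore exactly $m-1$ critical points, namely $k_2,\ldots,k_m$, are adjoined when passing from $\PC(f)$ to $\WPC(f)$, so $r=n+(m-1)$ and $r-n=m-1$ as required.

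Putting the pieces together in order: first invoke Proposition \ref{prop:BLzeta} for $\chi_W(f;t)=t^{m-1}D_f(t)$; next invoke Proposition \ref{prop:Markov_poly} for $\chi_W(f;t)=t^{r-n}\chi_M(f;t)$; then use the kneading argument above to conclude $r-n=m-1$; finally cancel the common factor $t^{m-1}$ to obtain $D_f(t)=\chi_M(f;t)$.

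The main (and only nontrivial) obstacle is the critical-point count, since everything else is a one-line concatenation. The count itself is essentially bookkeeping given Corollary \ref{cor:knead}, but one should be slightly careful in the odd-$m$ case, where $\PC(f)$ also contains $0$ (since $f(k_1)=0$) and the orbit of $x=1$ has length $b+2$ rather than $b+1$; the above kneading description still shows that $k_1$ is the unique critical point visited, so the identity $r-n=m-1$ persists.
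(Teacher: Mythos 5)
Your proof is correct and takes essentially the same route as the paper: equate the two expressions for $\chi_W(f;t)$ from Proposition~\ref{prop:Markov_poly} and Proposition~\ref{prop:BLzeta} and cancel $t^{m-1}$. The only difference is that you explicitly verify the count $r-n=m-1$ via Corollary~\ref{cor:knead} (showing $k_1\in\PC(f)$ while $k_2,\ldots,k_m\notin\PC(f)$), whereas the paper states $\chi_W(f;t)=t^{m-1}\chi_M(f;t)$ directly and leaves that bookkeeping implicit; your accounting is accurate, including the odd-$m$ case where $0\in\PC(f)$.
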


\begin{proof}
Proposition \ref{prop:Markov_poly} implies that $\chi_W(f; t) = t^{m-1} \chi_M(f; t)$. The statement now follows from Proposition \ref{prop:BLzeta}.
\end{proof}

\subsection{$D_f$ is the homology polynomial of $\psi_f$}\label{subsec:BBK}

In \cite{BBK}, Birman-Brinkmann-Kawamuro introduce several related polynomial invariants of a pseudo-Anosov $\psi$ on a closed, connected, orientable surface $S$ with punctures, treated as marked points. These are:

\begin{itemize}
\item the \textit{homology polynomial} $h(\psi; t)$, which records the action of $\psi$ on a vector space $W$ of weight functions for an invariant train track $\tau$ which satisfy the \textit{switch conditions}. 
\item the \textit{puncture polynomial} $p(\psi; t)$, which records the action of $\psi$ on the degenerate subspace $Z \subseteq W$ of a certain skew-symmetric bilinear form. This polynomial is a product of cyclotomics, and may be reinterpreted using the permutation action of $\psi$ on a certain subset of the punctures of $S$.
\item the \textit{symplectic polynomial} $s(\psi; t)$, which records the action of $\psi$ on $W/Z$.
\end{itemize}

Note that $h(\psi; t) = s(\psi; t) \cdot p(\psi; t)$. The symplectic polynomial, and hence the homology polynomial, has the stretch factor $\lambda(\psi)$ as its root of maximal modulus.\\

In this section we prove statement (\ref{poly_invts:hom_1}) of Theorem \ref{mainthm:poly_invts}.

\begin{thm}\label{thm:hom_poly}
Fix $f \in \PA(m)$ and $\psi_f$ the pseudo-Anosov it generates. Set $n=|\PC(f)|$. Then

\[
D_f(t) = h(\psi_f; t) = \begin{cases}
s(\psi_f; t) & \text{if $n$ is odd}\\
s(\psi_f; t)(t+1) & \text{if $n$ is even.}
\end{cases}
\]
\end{thm}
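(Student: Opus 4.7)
The strategy is to realize the homology polynomial $h(\psi_f; t)$ as the characteristic polynomial of the strong Markov matrix $M_f$ acting on the weight space of an explicit invariant train track, then account for the puncture polynomial by a permutation-and-orientation analysis.

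First, the thickening construction supplies an invariant train track $\tau \subset S_{0,n+1}$ for $\psi_f$: the interval $I$ cut into $n-1$ subintervals by the $n$ postcritical points, together with an infinitesimal loop at each point of $\PC(f)$. Thus $\tau$ has $2n-1$ edges and $n$ switch vertices, one per postcritical point; each switch imposes a single linear constraint on weights, so the weight space $W = W(\tau)$ has dimension $n-1$, matching $\deg(D_f)$.

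Next I would show that restriction to the $n-1$ expanding edges induces an isomorphism $W \cong \bbR^{n-1}$. At each switch the switch condition can be explicitly solved for the weight of the incident infinitesimal loop in terms of the one or two adjacent expanding-edge weights, so the loop weights are redundant data once the expanding-edge weights are specified. Under this identification the induced action of $\psi_*$ on $W$ coincides (up to transpose, which is immaterial for characteristic polynomials) with the strong Markov matrix $M_f$ acting on expanding-edge weights. Combined with Corollary \ref{cor:chardigit}, this yields
\[
h(\psi_f; t) = \det(tI - \psi_*\big|_W) = \det(tI - M_f) = \chi_M(f; t) = D_f(t).
\]

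To pass from $h$ to $s$, I would then determine the puncture polynomial $p(\psi_f; t) = h(\psi_f; t)/s(\psi_f; t)$, which in the BBK framework records the action of $\psi_*$ on the degenerate subspace $Z \subseteq W$ of the associated symplectic pairing. The relevant orbit data are completely prescribed by the dynamics of $f$: the $n$ punctured $1$-prong singularities form a single $n$-cycle under $\psi_f$, while $\infty$ is fixed with $n-2$ prongs. A case analysis on the parity of $n$, rooted in the orientability of the foliations around $\infty$, should show that $Z = 0$ when $n$ is odd, while when $n$ is even $Z$ is one-dimensional with $\psi_*$ acting as $-1$. These give $p(\psi_f; t) = 1$ or $t+1$ respectively, producing the desired splitting of $D_f(t)$.

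The main obstacle will be this final step. Identifying $Z$ inside $W$ and pinning down the sign of $\psi_*$ on it requires translating BBK's intrinsic construction of the symplectic pairing into the concrete combinatorics of the thickening train track, and the parity dichotomy reflects a subtle global orientation phenomenon concentrated at $\infty$ that is most naturally tracked through the lifts $\psi_\pm$ on the orientation double cover. Steps 1--3 are largely a structural bookkeeping exercise; essentially all the real work concentrates in the puncture-polynomial computation.
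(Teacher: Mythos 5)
Your first step, identifying $h(\psi_f;t)$ with $\chi_M(f;t) = D_f(t)$ via the weight space of the invariant train track, matches the paper's strategy. One caveat: the paper explicitly notes that the raw thickening track is not in the Bestvina--Handel form required to invoke Birman--Brinkmann--Kawamuro directly, and first applies a folding move at each non-extremal loop (collapsing two infinitesimal edges with the same image) to produce a BBK-compatible track $\tau'$ with the same transition matrix on expanding edges; only then is BBK's dimension formula ($\dim W = \#\text{edges} - \#\text{non-odd vertices}$) applied. Your naive count of $\dim W = (2n-1) - n = n-1$ lands on the right number, but it is not justified to apply BBK's theorem to the unfolded track, so the folding step should be inserted. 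The identification of $W$ with expanding-edge weights is the same idea as the paper's explicit basis $\eta_i$ (weight $1$ on $e_i$, weight $1/2$ on the two adjacent loops).

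The genuine gap is the puncture polynomial, which you yourself flag as incomplete. You plan to carry out a ``permutation-and-orientation analysis'' and track the sign through the lifts $\psi_\pm$ on the orientation double cover, but this is both harder than necessary and not actually carried out. The tool you are missing is Proposition 3.3 of \cite{BBK}: $\dim(Z) = \deg p(\psi_f;t)$ equals the number of punctures of $S$ represented in $\tau'$ by a loop with an \emph{even} number of corners, where the number of corners at a loop equals the number of prongs of the foliation at that puncture. Each of the $n$ punctured $1$-prong singularities has a loop with one corner (odd), so it contributes nothing; the only candidate is $\infty$, whose complementary region has $n-2$ corners, contributing precisely when $n$ is even. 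This gives $\deg p = 0$ or $1$ according to the parity of $n$. Then, since $p$ is a product of cyclotomic polynomials and $D_f(1) \neq 0$, the degree-one case forces $p(\psi_f;t) = t+1$, and $s = h/p$ follows. Note also that the orientation-double-cover argument you sketch is the mechanism the paper uses for the \emph{next} part of Theorem \ref{mainthm:poly_invts} (the statement about $\chi_\pm$), not for the homology/symplectic polynomial dichotomy, which is settled entirely within BBK's framework.
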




\begin{figure}
\centering
\includegraphics[scale=.4]{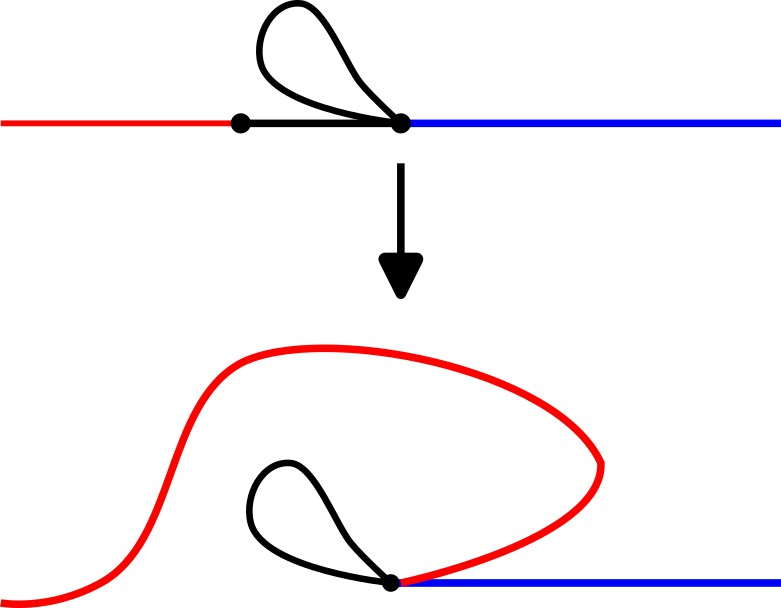}
\caption{The folding move in the proof of Theorem \ref{thm:hom_poly}. The loop and the middle edge are \textit{infinitesimal edges}, while the other two edges are expanding. The two infinitesimal edges have the same image under $f$. Thus we may identify them to obtain a new graph $\tau'$ with one fewer edge. Applying this move to every non-extremal loop in $\tau$ produces a Bestvina-Handel train track that satisfies the requirements of \cite{BBK}. Importantly, the transition matrix of the new train track map is identical to that of the original, when restricting to the expanding edges.}\label{fig:folding}
\end{figure}

\begin{proof}
Recall that the homology polynomial $h(\psi_f; t)$ is equal to the characteristic polynomial of the action of $\psi_f$ on a space of weights on an underlying graph that satisfy the \textit{switch conditions} (cf. Theorem 2.1 in \cite{BBK}.) We first compute the dimension of this weight space, denoted $W(G,f)$, and show that it is equal to $\deg(D_f) = n-1$.

We make minor adjustments to our train track, which is not in quite the same form as that assumed by Birman-Brinkmann-Kawamuro. We refer the reader to Section 5 of \cite{F} for a description of the general form of our train tracks. Recall that each loop $\gamma$ of $\tau$ corresponds to an element of $\PC(f)$. The non-extremal loops of $\tau$ are those that do not correspond to $x=0, 1$ in $I$. For each non-extremal loop, we apply a \textit{folding move} as in Figure \ref{fig:folding}. The resulting train track $\tau'$ is still invariant for $\psi_f$,  and has the same transition matrix $M_f$, but is now a Bestvina-Handel track. In particular, it satisfies the hypotheses in \cite{BBK}.

The underlying graph $G$ of this new track $\tau'$ has $n-1$ expanding edges, $n$ infinitesimal loops, and $n$ vertices. All of the vertices are \textit{partial}, in the terminology of Birman-Brinkmann-Kawamuro, and hence we have the count

\begin{align*}
\deg(h(\psi_f; t)) = \dim{W(G,f)} & = \text{$\#$edges of $G$ - $\#$non-odd vertices of $G$}\\
& = (n-1+n)-n\\
& = n-1.
\end{align*}

To show that $h(\psi_f; t) = D_f(t)$, it remains to find a basis for $W(G, f)$. There is a natural such basis: each expanding edge $e_i$ is adjacent to two infinitesimal loops, $\gamma_{i-1}$ and $\gamma_i$. Define $\eta_i \in W(G, f)$ to be the weight function such that

\[
\eta_i(e_i)=1, \hspace{5mm} \eta_i(\gamma_{i-1})=\eta_i(\gamma_i) = 1/2,
\]

\noindent and $\eta_i=0$ outside of these three edges. The functions $\{\eta_i\}_{i=1}^{n-1}$ are linearly independent, since exactly one of them assigns non-zero weight to any expanding edge. Therefore, these $\eta_i$ form a basis for $W(G,f)$. Moreover, by the construction of the train track the action of $\psi_f$ on the expanding edges of $\tau'$ is precisely the action of $f$ on the subintervals of the strong Markov partition. Thus,

\[
h(\psi_f; t) = \chi_M(f; t) = D_f(t).
\]

Next we compute the puncture polynomial $p(\psi_f; t)$.  Recall that this polynomial is the characteristic polynomial of the action of $\psi_f$ on a certain degenerate subspace $Z$ of $W(G, f)$.  The dimension of $Z$ is equal to the number of punctures in $S_{0, n+1}$ represented by a loop with an even number of corners in $\tau'$ (cf. Proposition 3.3 in \cite{BBK}). The number of corners in such a loop is precisely equal to the number of prongs of the singularity at the puncture. Therefore, the puncture at infinity is the only puncture that could possibly contribute to the dimension count of $Z$, and it does so precisely when it has an even number of prongs, i.e. when $n-2$ is even. Thus,

\[
\deg(p(\psi_f; t)) = \dim(Z) = \begin{cases}
0 & \text{if $n$ is odd}\\
1 & \text{if $n$ is even.}
\end{cases}
\]

\noindent In particular, since $p(\psi_f; t)$ is a product of cyclotomics and since $D_f(1) \neq 0$ we have

\[
p(\psi_f; t) = \begin{cases}
1 & \text{if $n$ is odd}\\
t+1 & \text{if $n$ is even.}
\end{cases}
\]

Therefore, the symplectic polynomial $s(\psi_f; t)$ satisfies

\[
s(\psi_f; t) = \frac{h(\psi_f; t)}{p(\psi_f; t)} = \begin{cases}
D_f(t) & \text{if $n$ is odd}\\
D_f(t)/(t+1) & \text{if $n$ is even.}
\end{cases}
\]
\end{proof}

\subsection{The orientation double cover for $\psi_f$}\label{sec:orientation}

In this section we prove statement (\ref{poly_invts:hom_2}) of Theorem \ref{mainthm:poly_invts}.

A singular foliation $\calF$ of a surface $S$ is \textit{orientable} if there exists a vector field $X$ on $S$ which is zero at the singularities of $\calF$ and everywhere else is nonzero and tangent to $\calF$. Otherwise, we say $\calF$ is \textit{non-orientable}. Similarly, we may speak of a train track $\tau \subseteq S$ being orientable: say $\tau$ is \textit{orientable} if there is a choice of orientation for each edge of $\tau$ such that, if there is a smooth path from $e_1$ to $e_2$ then the orientation of $e_2$ agrees with the orientation of $e_1$ flowed along the path to $e_2$.

Since the literature on this subject is vast and since there is some confusion on the relation between these notions of orientability, we include here a brief summary of relevant properties. 

\begin{prop}\label{prop:orient}
Let $\psi: S \to S$ be a pseudo-Anosov of the compact, connected, oriented surface $S$. Then the following are equivalent:

\begin{enumerate}
\item \label{orient:unstable} The unstable foliation of $\psi$ is orientable.
\item \label{orient:stable} The stable foliation of $\psi$ is orientable.
\item \label{orient:track} Any invariant train track for $\psi$ is orientable.
\item \label{orient:eigenvalues} The spectral radius of the map $\psi_\ast: H_1(S; \bbZ) \to H_1(S; \bbZ)$ is the stretch factor $\lambda$ of $\psi$, and in fact exactly one of $\lambda$ and $-\lambda$ is an eigenvalue of $\psi_\ast$.
\end{enumerate}
\end{prop}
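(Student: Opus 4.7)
The plan is to prove the cycle $(3) \Leftrightarrow (1) \Leftrightarrow (2)$ by direct inspection of the local structure of $\psi$, and then handle the nontrivial equivalence $(1) \Leftrightarrow (4)$ via the cohomology class of the unstable transverse measure. The two ``orientability'' equivalences are essentially formal: in the singular Euclidean structure induced by $\psi$, the stable and unstable foliations are orthogonal and carry identical singularity data, so an orientation globalizes for one if and only if it globalizes for the other; and any invariant train track $\tau$ carries $\calF^u$ along its edges, so consistent edge orientations at switches correspond bijectively to global orientations of the unstable leaves. I would dispose of these equivalences first, reducing the problem to $(1) \Leftrightarrow (4)$.

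For $(1) \Rightarrow (4)$, I would realize the transverse measure $\mu^u$ of the oriented unstable foliation as a closed $1$-form $\omega^u$ on $S$, interpreted as a signed transverse cocycle. The invariance $\psi^{\ast} \mu^u = \lambda \mu^u$ lifts to $\psi^{\ast} [\omega^u] = \pm \lambda [\omega^u]$ in $H^1(S; \bbR)$, with the sign equal to $+1$ if $\psi$ preserves the chosen orientation of $\calF^u$ and $-1$ if it reverses it. Poincar\'e duality converts this to a $\pm \lambda$ eigenvalue for $\psi_\ast$ on $H_1(S; \bbR)$, and the spectral radius bound follows from the standard comparison of $\psi_\ast$ with the action of the train track transition matrix on edge weights, whose spectral radius is $\lambda$ by Perron-Frobenius. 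Only one of $\pm \lambda$ can occur: two independent eigenvectors, one for each sign, would yield a two-dimensional $\lambda^2$-eigenspace for $\psi_\ast^2$, contradicting the uniqueness (up to scaling) of the transverse cocycle for the unstable foliation of $\psi^2$.

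For $(4) \Rightarrow (1)$, I would start with an eigenvector of $\psi_\ast$ with eigenvalue $\pm \lambda$, dualize to a cohomology class $[\omega] \in H^1(S; \bbR)$ satisfying $\psi^{\ast} [\omega] = \pm \lambda [\omega]$, and represent it by a closed form adapted to an invariant train track. Perron-Frobenius applied to the transition matrix of the track then forces such an eigenvector with eigenvalue of modulus $\lambda$ to be a scalar multiple of the unstable transverse cocycle: the stable cocycles scale by modulus $\lambda^{-1}$, and the remaining eigenvalues of the transition matrix are strictly smaller in modulus. Once $[\omega]$ is identified as a transverse cocycle for $\calF^u$, the sign of its integral over small transversals provides a consistent orientation of the leaves of $\calF^u$.

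The step I expect to be most delicate is $(4) \Rightarrow (1)$, specifically the identification of an abstract $\pm \lambda$-eigenvector of $\psi_\ast$ with a genuine transverse cocycle orienting $\calF^u$ near the singularities, where the correspondence between $1$-forms and transverse cocycles requires care to avoid accumulating spurious contributions from the singular points. For this delicate step I would lean on the standard machinery (Fathi-Laudenbach-Po\'enaru, and the train track framework of Birman-Brinkmann-Kawamuro used elsewhere in the paper) rather than redevelop it from scratch; the surrounding equivalences $(1) \Leftrightarrow (2) \Leftrightarrow (3)$ are comparatively routine bookkeeping in the singular Euclidean structure.
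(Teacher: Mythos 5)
Your argument is correct in outline but takes a genuinely different route from the paper. The paper's proof of Proposition 7.2 is almost entirely a deferral: it cites Lemma~4.3 of Band--Boyland for the equivalence $(1) \Leftrightarrow (2) \Leftrightarrow (4)$, and for $(1) \Leftrightarrow (3)$ it gives the one-line observation you also make in one direction (an invariant track carries $\calF^u$ and inherits its orientation) together with a citation of Los (Lemma~2.6) for the converse. By contrast, you unwind the content of the Band--Boyland lemma: your $(1) \Leftrightarrow (2)$ via the $90^{\circ}$ rotation on an oriented surface (identical singularity data plus orthogonality matches the global monodromy obstructions) is actually cleaner and more elementary than invoking the cited lemma, since it isolates exactly the one fact needed; and your sketch of $(1) \Leftrightarrow (4)$ via the cohomology class of the signed transverse measure, Poincar\'e duality, and Perron--Frobenius on the transition matrix is the standard argument that underlies the citation.

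The one place you should be careful is the same place you flag: in $(4) \Rightarrow (1)$, passing from a $\pm\lambda$-eigenvector of $\psi_{\ast}$ to a positively-oriented transverse cocycle requires relating $H^1(S)$ to the weight/cocycle space of an invariant train track and knowing that the $\lambda$-eigenspace of the transition matrix is one-dimensional with positive eigenvector (Perron--Frobenius, using primitivity). Your ``only one sign occurs'' argument for $(1) \Rightarrow (4)$ leans on this same comparison, so the two directions are not independent as written and both ultimately rest on that identification; this is precisely what the paper is outsourcing to Band--Boyland. You do acknowledge this and propose to lean on FLP and BBK for it, which is a reasonable choice and in fact is what the paper does. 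In short: you reprove the elementary parts of the cited lemma from scratch (an improvement in self-containedness), correctly identify the hard part, and defer on it just as the paper does; the net effect is a more transparent but not shorter proof.
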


\begin{proof}
The proof of Lemma 4.3 in \cite{BB} provides the equivalences

\[
(\ref{orient:unstable}) \iff (\ref{orient:stable}) \iff (\ref{orient:eigenvalues}).
\]

It remains to show that (\ref{orient:unstable}) is equivalent to (\ref{orient:track}). If the unstable foliation $\calF$ of $\psi$ is orientable, then any invariant train track $\tau$ of $\psi$ carries $\calF$, and in particular inherits an orientation from that of $\calF$. Conversely, an orientable train track $\tau$ carries only orientable invariant foliations (cf. Lemma 2.6 in \cite{Los}).
\end{proof}

\begin{defn}
We say a pseudo-Anosov $\psi$ is \textit{orientable} if it satisfies any of the equivalent conditions of Proposition \ref{prop:orient}.
\end{defn}

Thus the orientability of a pseudo-Anosov is an important invariant. There is a standard construction for producing a pseudo-Anosov from a non-orientable one. This construction is called the \textit{orientation double cover}, and underlies the techniques in \cite{BB}, \cite{BBK}, and \cite{Los}. Briefly, one takes the branched double cover $p: \tilde{S} \to S$, branching along the singularities of $\psi$ that have an odd number of prongs. Denoting by $\iota: \tilde{S} \to \tilde{S}$ the unique non-trivial deck transformation for this cover, we see that $\iota^2 = 1$. Hence, we have the decomposition

\[
H_1(\tilde{S}; \bbZ) = V_+ \oplus V_-,
\]

\noindent where $V_{\pm}$ is the eigenspace of $\iota_\ast$ for the eigenvalue $\pm 1$. The pseudo-Anosov $\psi$ lifts to two maps $\psi_1, \psi_2: \tilde{S} \to \tilde{S}$ that both commute with $\iota$ and satisfy $\psi_1 = \iota \circ \psi_2$. 

The following proposition combines the work of \cite{BB} and \cite{BBK}.
\begin{prop}\label{prop:cover}
Let $\psi: S \to S$ be a non-orientable pseudo-Anosov with stretch factor $\lambda > 1$, and let $p: \tilde{S} \to S$ be the orientation double cover of $\psi$. Let $\iota: \tilde{S} \to \tilde{S}$ be the unique non-trivial deck transformation for this cover. Then:

\begin{enumerate}
\item Both $\psi_1$ and $\psi_2$ are orientable pseudo-Anosovs with stretch factor $\lambda$.
\item \label{cover:ker} The kernel of $p_\ast$ is precisely $\ker(p_\ast) = V_- \subseteq H_1(\tilde{S}; \bbZ)$.
\item The actions of $(\psi_1)_\ast$ and $(\psi_2)_\ast$ on $V_+$ are each conjugate to the action of $\psi_\ast$ on $H_1(S; \bbZ)$.
\item Exactly one of $(\psi_1)_\ast$ and $(\psi_2)_\ast$ has $\lambda$ as an eigenvalue, while the other has $-\lambda$ as an eigenvalue. Denote the corresponding homeomorphisms of $\tilde{S}$ by $\psi_+$ and $\psi_-$, respectively.
\item \label{cover:char} The characteristic polynomials of $(\psi_+)_\ast$ and $(\psi_-)_\ast$ acting on $V_-$ satisfy

\[
\text{$\det(tI-(\psi_+)_\ast |_{V_-}) = h(\psi; t)$ \hspace{5mm} and \hspace{5mm} $\det(tI-(\psi_-)_\ast |_{V_-}) = h(\psi; -t)$.}
\]
\end{enumerate}
\end{prop}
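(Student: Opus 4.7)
The plan is to assemble the proposition from results already established in \cite{BB} and \cite{BBK}, cross-referencing Proposition \ref{prop:orient} for the characterization of non-orientability. Since the statement is a repackaging of existing theory tailored to our setup, the work lies in verifying conventions and tracking the sign coming from the deck involution. I will address the five items in order and flag the genuinely non-trivial identification at the end.

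For (1), I would recall that $\tilde{S}$ is the branched double cover of $S$ classified by the $\bbZ/2\bbZ$ character on $\pi_1(S \setminus \Sing(\psi))$ that records the parity of the monodromy of a tangent direction of the unstable foliation; branching occurs precisely at the odd-pronged singularities. By construction, the lifts of the invariant foliations are orientable, so both lifts $\psi_1, \psi_2 = \iota \circ \psi_1$ are orientable pseudo-Anosov homeomorphisms. Their stretch factor is $\lambda$ because the pointwise expansion of a pseudo-Anosov is a local invariant preserved by any finite cover.

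Parts (2) and (3) are a standard computation with a degree-$2$ deck transformation. Every $v \in H_1(\tilde{S}; \bbZ)$ satisfies $p_\ast(v) = p_\ast(\iota_\ast v)$, so $V_- \subseteq \ker p_\ast$; the reverse inclusion (and the fact that $p_\ast|_{V_+}$ realizes a conjugacy, after rationalizing to avoid $2$-torsion, between the action of $\psi_\ast$ on $H_1(S;\bbZ)$ and the action of $(\psi_i)_\ast$ on $V_+$) follows from the transfer-restriction sequence attached to the cover and from the intertwining $p \circ \psi_i = \psi \circ p$ passed to homology. For (4), Proposition \ref{prop:orient} together with the non-orientability of $\psi$ rules out $\pm\lambda$ as an eigenvalue of $\psi_\ast$ on $H_1(S;\bbZ)$, hence also on $V_+$. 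Since $(\psi_i)_\ast$ has spectral radius $\lambda$ on the whole of $H_1(\tilde{S}; \bbZ)$, one of $\pm\lambda$ must appear as an eigenvalue on $V_-$. Finally, $\psi_1 = \iota \circ \psi_2$ and $\iota_\ast = -\mathrm{id}$ on $V_-$ give $(\psi_1)_\ast|_{V_-} = -(\psi_2)_\ast|_{V_-}$, so the two lifts distribute $+\lambda$ and $-\lambda$ between them, justifying the labelling $\psi_+, \psi_-$.

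The main obstacle is (5): identifying $V_- \subseteq H_1(\tilde{S}; \bbZ)$ with the switch-condition weight space $W(G, \psi)$ used by \cite{BBK} to define $h(\psi; t)$, in a manner that intertwines $(\psi_+)_\ast|_{V_-}$ with the weight-space transition matrix. My strategy is to lift a chosen Bestvina-Handel invariant train track $G \subseteq S$ for $\psi$ to an invariant train track $\tilde{G} \subseteq \tilde{S}$ for $\psi_+$, with $p$ restricting to a covering $\tilde{G} \to G$. Since the unstable foliation upstairs is orientable, every edge of $\tilde{G}$ acquires a consistent orientation, and the corresponding signed edge-weighting pushes down to a switch-condition weighting on $G$; this sets up an isomorphism $V_- \cap H_1(\tilde{G};\bbZ) \cong W(G, \psi)$ that intertwines $(\psi_+)_\ast$ with the transition action whose characteristic polynomial is $h(\psi; t)$. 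Taking determinants gives $\det(tI - (\psi_+)_\ast|_{V_-}) = h(\psi; t)$, and substituting $\psi_- = \iota \circ \psi_+$ with $\iota_\ast = -\mathrm{id}$ on $V_-$ yields $\det(tI - (\psi_-)_\ast|_{V_-}) = h(\psi; -t)$, completing the proposition.
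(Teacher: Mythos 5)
The paper gives no argument for this proposition, stating only that it ``combines the work of \cite{BB} and \cite{BBK}''; your sketch correctly unpacks how those cited results assemble into the five items (the branched-cover construction from \cite{BB}, the eigenspace decomposition of $\iota_\ast$, and the identification of $V_-$ with the switch-condition weight space from \cite{BBK}). Your approach is thus the same as the paper's, just with the citations made explicit, and the sign-tracking via $(\psi_1)_\ast|_{V_-} = -(\psi_2)_\ast|_{V_-}$ and the caveat about working over $\bbQ$ to avoid $2$-torsion are both handled correctly.
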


Consider $f \in \PA(m)$ with $n=|\PC(f)|$. The pseudo-Anosov $\psi_f$ is defined on $S=S_{0, n+1}$, and its set of singularities coincides with the puncture set of $S$: each of the points of $\PC(f)$ corresponds to a punctured 1-pronged singularity, and the final puncture is a singularity with $n-2$ prongs. The orientation double cover of $\psi_f$ is the surface

\[
\widetilde{S} \cong \begin{cases}
S_{\frac{n-1}{2}, \ n+1}& \text{if $n$ is odd}\\
S_{\frac{n-2}{2}, \ n+2} & \text{if $n$ is even.}
\end{cases}
\]

\noindent In $\widetilde{S}$ there are $n$ punctured 2-prong singularities lying over the $1$-prong singularities in $S$. The remaining punctures (there are either one or two) project to the final puncture of $S$. In either case, Proposition \ref{prop:cover} implies that 

\begin{equation}\label{eqn:homology}
H_1(\widetilde{S}; \bbZ) \cong H_1(S; \bbZ) \oplus V_- \cong \bbZ^n \oplus \bbZ^{n-1},
\end{equation}

\noindent and that the actions of $(\psi_f)_+$ and $(\psi_f)_-$ preserve this splitting.

We are now ready to prove statement (\ref{poly_invts:hom_2}) of Theorem \ref{mainthm:poly_invts}.

\begin{thm}
Let $\bbS$ be the surface obtained from $\widetilde{S}$ by filling in the lifts of the punctured $1$-prong singularities of $\psi_f$. Denote by $\chi_+(t)$ (resp., $\chi_-(t)$) the characteristic polynomial of $\psi_+$ (resp., $\psi_-$) acting on $H_1(\bbS; \bbZ)$. Then

\[
D_f(t) = \chi_+(t) \hspace{5mm} \text{and} \hspace{5mm} D_f(-t) = \chi_-(t).
\]
\end{thm}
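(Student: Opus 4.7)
The plan is to identify $H_1(\bbS; \bbQ)$, as a module over $\psi_\pm$, with the subspace $V_- \subseteq H_1(\widetilde{S}; \bbQ)$ from Proposition \ref{prop:cover}; once this identification is in hand, the statement follows at once from that proposition together with Theorem \ref{thm:hom_poly}. Since $H_1(\bbS; \bbZ)$ is free abelian, passing to $\bbQ$ does not affect characteristic polynomials, so it suffices to work rationally.

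Let $\tilde{q}_1, \ldots, \tilde{q}_n \in \widetilde{S}$ be the preimages of the $1$-prong singularities of $\psi_f$ (these are exactly the points filled in to form $\bbS$), and let $\tilde{\gamma}_i \in H_1(\widetilde{S}; \bbZ)$ denote the class of a small loop around $\tilde{q}_i$. The inclusion $\widetilde{S} \hookrightarrow \bbS$ induces a surjection $H_1(\widetilde{S}; \bbQ) \twoheadrightarrow H_1(\bbS; \bbQ)$ whose kernel is generated by the $\tilde{\gamma}_i$. Using $\dim H_1(S_{g,p}; \bbQ) = 2g + p - 1$ for $p \geq 1$ together with the surface formulas in Section \ref{sec:orientation}, one reads off $\dim H_1(\widetilde{S}; \bbQ) = 2n-1$ and $\dim H_1(\bbS; \bbQ) = n-1$ regardless of the parity of $n$, so the kernel has dimension $n$. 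Since the $n+1$ or $n+2$ puncture loops of $\widetilde{S}$ satisfy only the single relation that their sum vanishes, the $\tilde{\gamma}_i$ are linearly independent, and hence they form a basis for the kernel.

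Next I would show that this kernel is precisely $V_+$. Because $\tilde{q}_i$ is a branch point of the orientation double cover, $\iota$ fixes $\tilde{q}_i$ and acts on a neighborhood as an orientation-preserving involution with an isolated fixed point; in suitable local coordinates this is just rotation by $\pi$, which preserves the class of a small loop around $\tilde{q}_i$. Thus $\iota_\ast(\tilde{\gamma}_i) = \tilde{\gamma}_i$, so the kernel is contained in $V_+$, and by the dimension count $\dim V_+ = n$, the containment is an equality. Because the filling map is $\iota$-equivariant and each of $\psi_+, \psi_-$ commutes with $\iota$, the induced $\psi_\pm$-action on $H_1(\bbS; \bbQ) \cong H_1(\widetilde{S}; \bbQ)/V_+ \cong V_-$ agrees with the restriction of $\psi_\pm$ to $V_-$. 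Applying Proposition \ref{prop:cover}(\ref{cover:char}) together with Theorem \ref{thm:hom_poly} then gives $\chi_+(t) = h(\psi_f; t) = D_f(t)$ and $\chi_-(t) = h(\psi_f; -t) = D_f(-t)$.

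The main obstacle is the local analysis of $\iota$ near the $\tilde{q}_i$: one must confirm that an orientation-preserving involution with an isolated fixed point acts trivially, not by negation, on the class of an encircling loop. This is forced because the local model is rotation by $\pi$ of a disk, which restricts to an orientation-preserving diffeomorphism of the boundary circle; but it is the only genuine computation in the proof, with the rest reducing to dimension counting against results already established in the paper.
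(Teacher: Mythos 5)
Your proof is correct and follows the same strategy as the paper: identify $H_1(\bbS; \bbQ)$ with $V_-$ and then invoke Proposition \ref{prop:cover}(\ref{cover:char}) together with Theorem \ref{thm:hom_poly}. The only difference is that you supply the detail (dimension count of the kernel, linear independence of the puncture classes, and the local computation that $\iota$ acts trivially on a loop around a branch point) for why filling in the lifted $1$-prong punctures kills exactly $V_+$, a step the paper asserts in a single sentence by appealing to equation (\ref{eqn:homology}).
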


\begin{proof}
We prove that $D_f(t) = \chi_+(t)$, since the other statement follows immediately. Filling in the $2$-prong singularities of $\widetilde{S}$ kills the factor of $H_1(S; \bbZ)$ in (\ref{eqn:homology}). Thus, the action of $(\psi_+)_\ast$ on $H_1(\bbS; \bbZ)$ is conjugate to the action of $(\psi_+)_\ast$ on $V_-$. By statement (\ref{cover:char}) of Proposition \ref{prop:cover} we find

\[
\chi_+(t) = h(\psi; t) = D_f(t),
\]

\noindent where the final equality was proved in Theorem \ref{thm:hom_poly}.
\end{proof}
\subsection{The Burau representation}\label{subsec:Burau} In this section we complete the proof of Theorem \ref{mainthm:poly_invts}.

Fix $n \geq 2$, and denote by $B_n$ the $n$-stranded braid group. The \textit{reduced Burau representation in dimension $n$} is a homomorphism 

\[
\bbB(\cdot, z): B_n \to \GL(n-1, \bbZ[z, z^{-1}]).
\]

\noindent Thus, the image $\bbB(\beta, z)$ of an $n$-braid $\beta$ is an $(n-1) \times (n-1)$ matrix whose entries are Laurent polynomials in the variable $z$. There are explicit formulas for the Burau representation in terms of the Artin generators for $B_n$; cf. for example Chapter 3 of \cite{Bir}.

\begin{defn}
The \textit{symplectic representation} is the homomorphism

\[
\bbB(\cdot, -1): B_n \to \GL(n-1, \bbZ)
\]

\noindent obtained by specializing $\bbB(\cdot, z)$ at $z=-1$. We define the \textit{symplectic Burau polynomial} of $\beta \in B_n$ to be the characteristic polynomial of $\bbB(\beta, -1)$:

\[
\chi(\beta; t) = \det(tI - \bbB(\beta, -1)).
\]

\noindent Note that $\chi(\beta; t)$ has degree $n-1$.
\end{defn}

It is well-known that the mapping class group of the $n$-punctured disc $D_n$, relative to its boundary circle $\partial D_n$, is

\[
\Mod(D_n, \partial D_n) \cong B_n.
\]

\noindent Moreover, the center of $B_n$ is generated by the \textit{full twist} $\Delta_n^2$, the Dehn twist around $\partial D_n$.  Capping off this boundary component by a punctured disc kills this element of $B_n$, and we obtain the isomorphism

\[
B_n / \Delta_n^2 \cong \Mod(S_{0,n+1},p),
\]

\noindent where the group on the right is the subgroup of $\Mod(S_{0, n+1})$ fixing the puncture $p$ in the capping disc. Thus, given $\psi \in \Mod(S_{0, n+1}, p)$ we may associate to it a braid $\beta \in B_n$, which is well-defined up to multiplication by a power of $\Delta_n^2$.\\

\begin{rmk}\label{rmk:sign}
One can show that $\bbB(\Delta_n^2, -1) = -I_{n-1}$. Therefore, 

\[
\chi(\Delta_n^2 \beta; t) = \chi(\beta; -t).
\]

\noindent In particular, the symplectic Burau polynomial of a braid representative of a pseudo-Anosov $\psi \in \Mod(S_{0, n+1}, p)$ is nearly an invariant of $\psi$ itself: the only ambiguity is a possible change of variable $t \mapsto -t$.
\end{rmk}

Given $f \in \PA(m)$ with $|\PC(f)|=n$ the pseudo-Ansov $\psi_f: S_{0,n+1} \to S_{0, n+1}$ has a unique singularity $\infty$ with $n-2$ prongs, which is necessarily a fixed point. Ripping open this puncture to a boundary component, we obtain an $n$-braid $\beta_f$ defined up to multiplication by $\Delta_n^2$. Regardless of our choice for $\beta_f$, however, it will always be pseudo-Anosov with a 1-prong singularity at each puncture of $D_n$ and an $(n-2)$-prong singularity at $\partial D_n$. Moreover, the stretch factor of $\beta_f$ is equal to the stretch factor $\lambda$ of $\psi_f$. 

With the remark in mind, we state the main result of this section.

\begin{thm}\label{thm:burau_poly}
Fix $f \in \PA(m)$, and let $\beta_f$ be a braid representative of $\psi_f$ after ripping open $\infty \in S_{0,n+1}$ to a boundary circle. Then

\[
\chi(\beta_f; t) = \begin{cases}
D_f(t) & \text{if $\chi(\beta_f; t)$ has $\lambda$ as a root}\\
D_f(-t) & \text{if $\chi(\beta_f; t)$ has $-\lambda$ as a root.}
\end{cases}
\]
\end{thm}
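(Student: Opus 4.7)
The plan is to identify both sides of the desired identity as the characteristic polynomial of a single operator on a common $(n-1)$-dimensional vector space. The bridge between the braid picture and the pseudo-Anosov picture will be the classical description of the symplectic Burau representation as the action of the braid group on the $(-1)$-eigenspace of the deck involution on the first homology of the branched double cover of $D_n$; combined with statement (\ref{poly_invts:hom_2}) of Theorem \ref{mainthm:poly_invts}, which already computes this characteristic polynomial as $D_f(\pm t)$, the theorem will follow once the two covers are matched.

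\textbf{Execution.} First I would record the Magnus--Squier interpretation: letting $\tilde{D}_n \to D_n$ be the $2$-fold cover branched at the $n$ punctures of $D_n$, with deck involution $\iota$, and writing $H_1(\tilde{D}_n;\bbZ) = V_+ \oplus V_-$ for its $\pm 1$-eigenspace decomposition, the matrix $\bbB(\beta_f,-1)$ is (up to conjugation) the matrix of a chosen lift $\tilde{\beta}_f$ of $\beta_f$ acting on $V_-$; the other lift $\iota \circ \tilde{\beta}_f$ acts as the negative of this matrix. Second I would identify $\tilde{D}_n$ with a subsurface of the orientation double cover $\widetilde{S}$ of $\psi_f$ from Section \ref{sec:orientation}. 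Capping $\partial D_n$ by a once-punctured disc (with puncture $\infty$) to recover $S_{0,n+1}$ lifts to a surgery on $\tilde{D}_n$ that depends on the parity of $n$: when $n$ is even, the orientation cover is unbranched over $\infty$, so the preimage of $\partial D_n$ consists of two circles, each capped by a once-punctured disc around a preimage of $\infty$; when $n$ is odd, the orientation cover branches over $\infty$, so the preimage is a single circle extended by a disc branched at the unique lift of $\infty$. After further filling in the $n$ two-prong singularities of $\widetilde{S}$ that lie over the $1$-prong singularities of $\psi_f$, the resulting surface is exactly $\bbS$, and the lift $\tilde{\beta}_f$ extends to one of the pseudo-Anosov lifts $\psi_+$ or $\psi_-$.

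\textbf{Matching.} The inclusion $\tilde{D}_n \hookrightarrow \bbS$ induces an $\iota_\ast$-equivariant map on $H_1$, and both groups have rank $n-1$ (as already verified via the dimension count in the proof of Theorem \ref{thm:hom_poly}), so this map is an isomorphism identifying the two incarnations of $V_-$. Under this identification, $\tilde{\beta}_f$ corresponds to either $\psi_+$ or $\psi_-$, with the choice controlled by the braid representative selected. Statement (\ref{poly_invts:hom_2}) of Theorem \ref{mainthm:poly_invts} now gives $\chi(\beta_f; t) = D_f(t)$ precisely when the lift is $\psi_+$, i.e. when $\lambda$ is a root, and $\chi(\beta_f; t) = D_f(-t)$ when the lift is $\psi_-$, i.e. when $-\lambda$ is a root. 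The ``moreover'' clause is then immediate from Remark \ref{rmk:sign}: composing $\beta_f$ with $\Delta_n^2$ multiplies $\bbB(\beta_f,-1)$ by $-I_{n-1}$, which negates all eigenvalues and so toggles between the two cases above.

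\textbf{Main obstacle.} The central technical difficulty will be carrying out the identification of $\tilde{D}_n$ (as a branched cover of the disc) with the relevant subsurface of the orientation cover $\widetilde{S}$ (a branched cover of the sphere), uniformly across both parities of $n$, and checking that the braid lift extends to the correct pseudo-Anosov lift on $\bbS$ so that the $V_-$ eigenspaces match. Once that identification is rigorously in place, the rest of the argument is a straightforward comparison of characteristic polynomials.
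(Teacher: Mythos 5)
Your proposal is correct and follows essentially the same route as the paper: both identify $\chi(\beta_f;t)$ with the characteristic polynomial of a lift of $\beta_f$ acting on the $(-1)$-eigenspace $V_-$ of a double cover of $D_n$, identify that cover (after the appropriate capping and filling) with the orientation double cover used in statement~(\ref{poly_invts:hom_2}), and then invoke the earlier computation of $\chi_{\pm}(t) = D_f(\pm t)$. The only cosmetic difference is that you phrase step one via the Magnus--Squier branched double-cover picture, whereas the paper works inside Band--Boyland's cyclic-cover framework ($D_n^{(2)}$ as the unbranched double cover, with their Theorem~5.1 certifying it as the orientation cover of $\beta_f$); the two are interchangeable here, and the paper's proof, like yours, also relies on a direct isomorphism between the two $V_-$ spaces induced by capping boundary components.
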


Our argument relies heavily on the work of Band-Boyland in \cite{BB}. We direct the reader to sections 2 and 3 of that paper for details. The reduced Burau matrix $\bbB(\beta, z)$ describes the action on first homology of a preferred lift $\tilde{h}$ of $\beta$ to a certain cover of $D_n$, denoted $D_n^{(\infty)}$. The deck group of the cover $\tilde{p}: D_n^{(\infty)} \to D_n$ is isomorphic to $\bbZ$, and thus for each $k \geq 1$ we obtain a cover $p_k: D_n^{(k)} \to D_n$ via the quotient map $\xi_k: \bbZ \to \bbZ/k\bbZ$, as well as a preferred lift $h^{(k)}: D_n^{(k)} \to D_n^{(k)}$ of $\beta$. 

Importantly, one obtains information about $h_\ast^{(k)}: H_1(D_n^{(k)}) \to H_1(D_n^{(k)})$ by specializing $\bbB(\beta, z)$ at the $k$th roots of unity. 

\begin{prop}[Theorem 3.4 in \cite{BB}]\label{prop:eigen_decomp}
For $k \geq 1$ set $\zeta_k$ to be a primitive $k$th root of unity. There is an invariant subspace $S_\bbC^{(k)}$ of $H_1(D_n^{(k)}; \bbC)$ such that the action of $h^{(k)}_\ast$ on this subspace is by

\begin{equation}\label{eqn:eigen_decomp}
\bbB(\beta, 1) \oplus \bbB(\beta, \zeta_k) \oplus \cdots \oplus \bbB(\beta, \zeta_k^{k-1}).
\end{equation}

\noindent Moreover, set $T$ to be the generator of the deck group $\bbZ/k\bbZ$ for $p_k: D_n^{(k)} \to D_n$. Then in (\ref{eqn:eigen_decomp}), the matrix $\bbB(\beta, \zeta_k^j)$ is the action of $h_\ast^{(k)}$ on the eigenspace of $T$ with eigenvalue $\zeta_k^j$.

Finally, any eigenvector of $h_\ast^{(k)}$ not lying in $S_\bbC^{(k)}$ has as its eigenvalue a root of unity.
\end{prop}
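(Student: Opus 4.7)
The plan is to invoke Proposition \ref{prop:eigen_decomp} at $k=2$ and identify the resulting cover and preferred lift with the orientation double cover constructed in Section \ref{sec:orientation}. This will reduce the computation of $\chi(\beta_f; t)$ to one about $\chi_\pm(t)$, which is already handled by statement (\ref{poly_invts:hom_2}) of Theorem \ref{mainthm:poly_invts}.

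Concretely, Proposition \ref{prop:eigen_decomp} at $k=2$ produces an invariant subspace $S_\bbC^{(2)} \subseteq H_1(D_n^{(2)}; \bbC)$ on which a preferred lift $h^{(2)}$ of $\beta_f$ acts as $\bbB(\beta_f, 1) \oplus \bbB(\beta_f, -1)$, the second summand recording the action on the $(-1)$-eigenspace of the deck generator $T$ of $p_2 : D_n^{(2)} \to D_n$. My first step is to identify this cover with $\widetilde{S} \to S_{0,n+1}$ from Section \ref{sec:orientation}: the cover $p_2$ is branched exactly over the $n$ punctures of $D_n$, so after capping $\partial D_n$ off to form $S_{0,n+1}$ (introducing the puncture corresponding to $\infty$) and filling in the appropriate lifts of the capping disc, one recovers the surface $\bbS$ obtained from $\widetilde{S}$ by filling in the lifts of the $n$ punctured $1$-prong singularities. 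Under this identification $T$ corresponds to $\iota$, and $h^{(2)}$ corresponds to one of the two pseudo-Anosov lifts $\psi_\pm$ of $\psi_f$.

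Next, I would use the resulting comparison on homology. By Proposition \ref{prop:cover}(\ref{cover:ker}), filling in the $n$ ramification points kills the pullback of $H_1(S_{0,n+1};\bbC)$, leaving the $(-1)$-eigenspace $V_-$ of $\iota$, of dimension $n-1$ — exactly the size of $\bbB(\beta_f,-1)$. Hence $\bbB(\beta_f,-1)$ is conjugate to $(\psi_\pm)_\ast|_{V_-}$, giving
\[
\chi(\beta_f; t) \;=\; \det\bigl(tI - \bbB(\beta_f,-1)\bigr) \;=\; \chi_\pm(t),
\]
which by statement (\ref{poly_invts:hom_2}) equals $D_f(t)$ when $\lambda$ is an eigenvalue and $D_f(-t)$ when $-\lambda$ is. The final addendum about $\Delta_n^2$ is immediate from Remark \ref{rmk:sign}.

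The main obstacle will be the topological identification of $D_n^{(2)}$ with a partial capping of $\widetilde{S}$, together with the tracking of how $\partial D_n$ and the $(n-2)$-prong singularity at $\infty$ interact with the two double covers. Since $n$ and $n-2$ have the same parity, the two branch loci agree after capping; the delicate part is verifying this match rigorously and pinning down which of $\psi_\pm$ the preferred lift $h^{(2)}$ corresponds to, since this choice is what determines the final sign in the conclusion.
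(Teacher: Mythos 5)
The statement you were asked to address is Proposition~\ref{prop:eigen_decomp}, which the paper attributes directly to Band--Boyland (Theorem~3.4 in \cite{BB}) and for which the paper itself supplies no proof; it is a black-box input. Your proposal does not prove this statement at all. Rather, it \emph{invokes} Proposition~\ref{prop:eigen_decomp} (at $k=2$) as a hypothesis, identifies $D_n^{(2)}$ with a partial capping of the orientation double cover $\widetilde{S}$, and from there deduces the formula for $\chi(\beta_f; t)$ in terms of $D_f$. That is, what you have written is a proof of Theorem~\ref{thm:burau_poly} --- statement~(\ref{poly_invts:Burau}) of Theorem~\ref{mainthm:poly_invts} --- not a proof of Proposition~\ref{prop:eigen_decomp}. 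Indeed, your argument closely parallels the paper's actual proof of Theorem~\ref{thm:burau_poly}, including the commuting diagram relating $(\beta_f^{(2)})_\ast\vert_{V_-(\iota)}$ to $\widetilde{\psi}_\ast\vert_{V_-(\iota')}$.

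If the intent were to prove Proposition~\ref{prop:eigen_decomp} itself, one would need to work with the cyclic covers $D_n^{(k)}$ directly: decompose $H_1(D_n^{(k)};\bbC)$ into eigenspaces of the deck generator $T$, show that the Burau representation specialized at $\zeta_k^j$ computes the action of the preferred lift on the $\zeta_k^j$-eigenspace, and verify that the complementary eigenvectors correspond to boundary/puncture classes carrying root-of-unity eigenvalues. None of that machinery appears in your proposal. As written, your argument is circular with respect to the assigned statement (you assume what you were asked to prove), even though it is a sound and essentially correct argument for the downstream Theorem~\ref{thm:burau_poly}.
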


Recall that, by Proposition \ref{prop:orient}, a pseudo-Anosov $\psi$ with stretch factor $\lambda$ is orientable if and only if $\lambda$ or $-\lambda$ is an eigenvalue of $\psi_\ast$. Band-Boyland use this fact to conclude the following.

\begin{thm}[Theorem 5.1 in \cite{BB}]\label{thm:BB}
Suppose $\beta \in B_n$ is a pseudo-Anosov braid having stretch factor $\lambda > 1$. Then the following are equivalent:

\begin{enumerate}
\item The spectral radius of $\bbB(\beta, -1)$ is $\lambda$, and $-1$ is the only root of unity for which this occurs.
\item \label{BB:foliations} The invariant foliations $\calF^u$ and $\calF^s$ have an odd-order singularity at each puncture of $D_n$ (not including the boundary), and all other singularities of $\calF^u$ and $\calF^s$ in the interior of $D_n$ have even order.
\item $D_n^{(2)}$ is the orientation double cover of $\beta$.
\end{enumerate}
\end{thm}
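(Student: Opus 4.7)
The plan is to invoke the Band--Boyland framework to identify the symplectic Burau matrix $\bbB(\beta_f,-1)$ with the action of a lift of $\beta_f$ on a summand of the homology of the orientation double cover, then bridge to statement (\ref{poly_invts:hom_2}) of Theorem \ref{mainthm:poly_invts} (proved in Section \ref{sec:orientation}), which already tells us that $\chi_+(t) = D_f(t)$ and $\chi_-(t) = D_f(-t)$.

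The first step is to verify the hypothesis of Theorem \ref{thm:BB}. Every puncture of $\PC(f)$ is a 1-prong (odd-order) singularity of $\psi_f$; after ripping open $\infty$ into $\partial D_n$, these are precisely the singularities of $\beta_f$ on the interior of $D_n$, and there are no other interior singularities. Condition (\ref{BB:foliations}) of Theorem \ref{thm:BB} therefore holds, so the spectral radius of $\bbB(\beta_f,-1)$ equals $\lambda$ and $D_n^{(2)}$ is the orientation double cover of $\beta_f$. Applying Proposition \ref{prop:eigen_decomp} with $k=2$, the preferred lift $h^{(2)}_\ast$ acts on an invariant subspace $S_\bbC^{(2)}\subseteq H_1(D_n^{(2)};\bbC)$ as $\bbB(\beta_f,1)\oplus\bbB(\beta_f,-1)$, split along the $\pm1$-eigenspaces of the deck involution $T$; everything outside $S_\bbC^{(2)}$ has roots-of-unity eigenvalues.

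Next I would match the data on $D_n^{(2)}$ with the data on $\bbS$. Capping off $\partial D_n^{(2)}$ (with one disc if $n$ is odd, two if $n$ is even, matching the parity of $n-2$) and noting that the branch points over the 1-prong punctures are already smooth in $D_n^{(2)}$, one recovers $\widetilde{S}$ with the 1-prong lifts filled in---i.e. exactly $\bbS$. Under this completion, $T$ corresponds to the deck involution $\iota$ of Proposition \ref{prop:cover} and $h^{(2)}$ extends to one of $\psi_+$ or $\psi_-$, determined by whether $\lambda$ or $-\lambda$ is an eigenvalue of $\bbB(\beta_f,-1)$. The homology classes killed by capping and filling contribute only root-of-unity eigenvalues, so they lie outside $S_\bbC^{(2)}$; the characteristic polynomial of $\bbB(\beta_f,-1)$ on the $(-1)$-eigenspace of $T$ therefore coincides with the characteristic polynomial of $\psi_\pm|_{V_-}\cong \psi_\pm|_{H_1(\bbS;\bbZ)}$, up to conjugation. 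Invoking (\ref{poly_invts:hom_2}) of Theorem \ref{mainthm:poly_invts} finishes the main identification: $\chi(\beta_f;t)=\chi_+(t)=D_f(t)$ or $\chi(\beta_f;t)=\chi_-(t)=D_f(-t)$.

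The ``moreover'' clause is a short algebraic computation: a standard identity (Remark \ref{rmk:sign}) gives $\bbB(\Delta_n^2,-1)=-I_{n-1}$, so composing with $\Delta_n^2$ negates $\bbB(\beta_f,-1)$, which replaces the variable $t$ by $-t$ in the characteristic polynomial (up to a sign that is absorbed into the identification with $D_f(\pm t)$ since $D_f$ is reciprocal by Lemma \ref{fundlem:Digit}). The main obstacle is the comparison in the third step: the Band--Boyland setup lives on the punctured disc $D_n^{(2)}$, while Section \ref{sec:orientation} is phrased on the closed surface $\bbS$, and one must check carefully that the capping-off procedure kills only root-of-unity eigenspaces, so that passing between the two models preserves the part of the spectrum that contains $\pm\lambda$ and hence the relevant characteristic polynomial.
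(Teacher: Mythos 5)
The statement you were asked to prove is Theorem \ref{thm:BB}, which is an \emph{imported} result: the paper cites it as Theorem 5.1 of \cite{BB} and does not prove it. What you have written is not a proof of Theorem \ref{thm:BB} at all --- in fact your argument explicitly \emph{invokes} Theorem \ref{thm:BB} in its first step (``verify the hypothesis of Theorem \ref{thm:BB} \dots so the spectral radius of $\bbB(\beta_f,-1)$ equals $\lambda$ and $D_n^{(2)}$ is the orientation double cover''), so if offered as a proof of that theorem it would be circular.

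What you have actually produced is a careful reconstruction of the paper's proof of Theorem \ref{thm:burau_poly}, the Burau statement (\ref{poly_invts:Burau}) of Theorem \ref{mainthm:poly_invts}. As a proof of \emph{that} theorem your argument is essentially identical to the paper's: both verify condition (\ref{BB:foliations}) of Theorem \ref{thm:BB} using the singularity data of $\beta_f$, both apply Proposition \ref{prop:eigen_decomp} with $k=2$ to isolate $\bbB(\beta_f,-1)$ as the action on the $(-1)$-eigenspace of the deck involution, both match $D_n^{(2)}$ with $\widetilde{S}$ (the paper does this via the commutative diagram identifying $V_-(\iota)$ with $V_-(\iota')$ under the capping-off map $\pi$), and both close by citing the homology-polynomial computation already established. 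If your task was indeed to prove Theorem \ref{thm:BB} itself, you would need to go back to \cite{BB}: that proof rests on (a) the observation that the orientation cover of $\beta$ is determined by the $\bbZ/2$-holonomy of the singularity data, (b) the fact that the mod-2 reduction of the abelianization $B_n \to \bbZ$ defines $D_n^{(2)}$, and (c) the homological characterization of orientability in Proposition \ref{prop:orient}(\ref{orient:eigenvalues}) applied on the double cover. None of this appears in your proposal, which takes Theorem \ref{thm:BB} for granted.
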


We are now ready to prove Theorem \ref{thm:burau_poly}. This will conclude the proof of Theorem \ref{mainthm:poly_invts}.

\begin{proof}[Proof of Theorem \ref{thm:burau_poly}]
Let $\beta_f$ be any pseudo-Anosov braid obtained from $\psi_f$ by ripping open $\infty \in S_{0,n+1}$ to the boundary circle of $D_n$. Then $\beta_f$ has the following singularity data:

\begin{itemize}
\item a $1$-prong singularity at each puncture of $D_n$, and
\item a singularity with $n-2$ prongs on $\partial D_n$.
\end{itemize}

\noindent Therefore, $\beta_f$ satisfies condition (\ref{BB:foliations}) of Theorem \ref{thm:BB}. It follows that $D_n^{(2)}$ is the orientation double cover of $\beta_f$. In this setting, the deck group of $p_2: D_n^{(2)} \to D_n$ is generated by $T=\iota$, the hyperelliptic involution of $D_n^{(2)}$.

On the other hand, one may take the orientation cover of $\psi_f: S_{0, n+1} \to S_{0, n+1}$, obtaining the surface $\widetilde{S}$ with its own hyperelliptic involution $\iota'$. As we have noted before, $\psi_f$ has a pair of lifts, $\psi_+$ and $\psi_-=\iota' \circ \psi_+$. \\

We will relate the following two actions:

\begin{enumerate}
\item the action of the preferred lift $\beta_f^{(2)}$ on $V_-(\iota)$
\item the action of a lift $\widetilde{\psi}$ of $\psi$ on $V_-(\iota')$.
\end{enumerate}

\noindent From Proposition \ref{prop:eigen_decomp} we know that action (1) has characteristic polynomial

\[
\det \left (tI - \left ( \beta_f^{(2)} \right )_\ast \bigg |_{V_-(\iota)} \right ) = \det(tI-\bbB(\beta_f, -1)) = \chi(\beta_f; t).
\]

\noindent For action (2), the work of Birman-Brinkmann-Kawamuro tells us that

\[
\det \left ( tI - \left ( \widetilde{\psi} \right )_\ast \bigg |_{V_-(\iota')} \right) = \begin{cases}
h(\psi_f; t) & \text{if $\widetilde{\psi} = \psi_+$}\\
h(\psi_f; -t) & \text{if $\widetilde{\psi} = \psi_-$}.
\end{cases}
\]

There is a natural isomorphism $\pi: H_1(D_n^{(2)}; \bbZ) \to H_1(\widetilde{S}; \bbZ)$ obtained by capping off each boundary component of $D_n^{(2)}$ with a punctured disc. Moreover, the following diagram commutes:

\[
\begin{tikzcd}[sep=large]
H_1(D_n^{(2)}; \bbZ) \arrow{r}{\pi} & H_1(\widetilde{S}; \bbZ) \\
V_-(\iota) \arrow[hookrightarrow]{u} \arrow{r}{\pi} & V_-(\iota') \arrow[hookrightarrow]{u} \\
V_-(\iota) \arrow{r}{\pi}  \arrow{u}{(\beta_f^{(2)})_\ast} & V_-(\iota') \arrow{u}{\widetilde{\psi}_\ast}
\end{tikzcd}
\]

\noindent Here, $\widetilde{\psi}$ is the unique lift of $\psi_f$ that we obtain by capping off $\beta_f^{(2)}$. We now see that actions (1) and (2) are conjugate, and hence their characteristic polynomials coincide. To finish the proof, we apply Theorem \ref{thm:hom_poly} to conclude that

\[
\chi(\beta_f; t) = \begin{cases}
D_f(t) & \text{if $\widetilde{\psi}=\psi_+$}\\
D_f(-t) & \text{if $\widetilde{\psi}=\psi_-$.}
\end{cases}
\]
\end{proof}

\begin{rmk}
Theorem \ref{thm:burau_poly} is a special case of a result proved in the thesis of Warren Michael Shultz, cf. Theorem 1.0.1 in \cite{Sh}. There is a slight error in the statement of this theorem: namely, item (1) does not include the possible sign ambiguity that we noted in Remark \ref{rmk:sign}.
\end{rmk}

\section{Future directions}

Over the course of \cite{F} and this paper, we have undertaken the study of a particular family $\PA(m)$ of interval maps that may be realized as the train track map for a pseudo-Anosov. In our case, $\PA(m)$ is a one-parameter family, determined by a single critical orbit. Moreover, we have shown that this family exhibits the structure of the Farey tree $\calF$, in that the critical orbit for the map associated to $q_1 \oplus q_2$ is essentially the concatenation of the critical orbits for the maps associated to $q_1$ and $q_2$.

\subsection{Tree-like families}

We strongly suspect that analogous results hold in the following more general context. Let $f: I \to I$ be a PCF interval map of pseudo-Anosov type.  Since $\psi$ permutes the set of $1$-prong singularities, so too does $f$ permute the elements of $\PC(f)$. We conjecture that, fixing all but one postcritical orbit and allowing the final one to vary, a tree-like family of interval maps of pseudo-Anosov type will result. The branching is due to the different choices of how to modify the orbit. In the case of $\PA(m)$, our only choices are to perturb the orbit of $x=1$ to the right or to the left, determining whether we descend to the left or to the right in $\calF$, respectively (cf. Remark \ref{rmk:perturb}).


It is conceivable that some generalization of the invariant $\Phi(f)$ will allow one to track their progress through this tree-like family of pseudo-Anosovs, but the details of such behavior are unclear to us. For one thing, it seems possible that the same rotation number might appear for different maps, unlike what happens for $\PA(m)$.

\subsection{Generalizing Theorem \ref{mainthm:poly_invts}}

What is more clear is how to generalize the statements in Theorem \ref{mainthm:poly_invts} to any interval map of pseudo-Anosov type. For a general such map, the natural replacement for $D_f$ is the strong Markov polynomial $\chi_M(f; t)$. Because the only critical values of a zig-zag map are $x=0$ and $x=1$, and because $x=0$ is either a fixed point or in the orbit of $x=1$, we can extract $\chi_M(f; t)$ purely from the itinerary of $x=1$. Thus we cannot define $D_f(t)$ in general.

An analysis of the proof of Theorem \ref{mainthm:poly_invts} shows, however, that apart from proving that $D_f(t) = \chi_M(f; t)$, the other statements do not rely on any special characteristic of zig-zag maps. The essential fact is that $\chi_M(f;t)$ is the characteristic polynomial of a train track map for $\psi_f$. Statement \ref{poly_invts:hom_1} has to be slightly altered, since it is possible for a factor of $t-1$ to appear if $n=\deg(\chi_M(f;t))+1$ is odd for general $f$, but statements \ref{poly_invts:zeta}, \ref{poly_invts:hom_2}, and \ref{poly_invts:Burau} hold verbatim after replacing $D_f(t)$ with $\chi_M(f; t)$.

\subsection{Interval maps from pseudo-Anosovs}

A PCF interval map $f$ of pseudo-Anosov type with $|\PC(f)| = p$ generates a pseudo-Anosov $\psi_f: S_{0,p+1} \to S_{0,p+1}$ with at most one singularity having two or more prongs. Call a pseudo-Anosov with only one such singularity \textit{hyperpolar}, since each $1$-prong singularity is a \textit{pole} for the associated quadratic differential.

Furthermore, call a pseudo-Anosov \textit{interval-like} if it admits an invariant train track $\tau$ consisting of a chain of expanding edges with an infinitesimal loop at each vertex. Thus, every interval-like pseudo-Anosov is hyperpolar. As we have seen, the dynamics of the PCF interval map underlying $f: \tau \to \tau$ determine those of the interval-like pseudo-Anosov. It is natural, then, to ask the following question.

\begin{question}
Which hyperpolar pseudo-Anosovs are interval-like?
\end{question}

In a forthcoming paper with Karl Winsor, we prove that the two sets of pseudo-Anosovs coincide.

\begin{thm}
Every hyperpolar pseudo-Anosov is interval-like.
\end{thm}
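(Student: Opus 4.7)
The plan is to realize an arbitrary hyperpolar pseudo-Anosov $\psi: S_{0,p+1} \to S_{0,p+1}$ as the thickening of some PCF interval map, by first producing an invariant train track of chain-plus-loops form and then collapsing the loops to recover the underlying one-dimensional system.

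By the Bestvina-Handel algorithm, $\psi$ admits an efficient invariant train track $\sigma$. The hyperpolar hypothesis, together with the correspondence between complementary components and singularities, forces the complement of $\sigma$ in $S_{0,p+1}$ to consist of exactly $p$ once-punctured monogons (one at each $1$-prong puncture) together with a single once-punctured $(p-2)$-gon at $\infty$. Partition the edges of $\sigma$ into its infinitesimal subgraph $\Sigma_{\mathrm{inf}}$ and its expanding subgraph $\Sigma_{\mathrm{exp}}$. Standard Whitehead-style collapses then reduce $\Sigma_{\mathrm{inf}}$ to exactly $p$ disjoint loops, one encircling each $1$-prong puncture: the monogon structure of each such puncture's complementary region guarantees the existence of at least one surrounding loop, while any additional infinitesimal edge can be absorbed into an adjacent expanding edge without changing the pseudo-isotopy class of $\psi$ on $\sigma$. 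After this reduction, Euler characteristic forces $\Sigma_{\mathrm{exp}}$ to have $p$ vertices and $p-1$ edges, and it contains no cycles (else an extra complementary region would appear), so it is a tree.

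The central technical step is to promote this tree to a path. If some vertex $v \in \Sigma_{\mathrm{exp}}$ had valence three or more, then together with the infinitesimal loop at $v$ we would have at least five edge-ends meeting at $v$, all sharing the common tangent required by the train-track condition. A careful cusp-counting argument around $v$ would then partition $\infty$'s complementary region into two or more pieces adjacent to $v$, contradicting the uniqueness of the $(p-2)$-gon. This forces $\Sigma_{\mathrm{exp}}$ to be a path, so $\sigma$ now has the desired chain-plus-loops form. Collapsing each infinitesimal loop to its base vertex identifies $\sigma$ with an interval $I$ carrying $p$ marked points, and the train-track map descends to a PCF interval map $f: I \to I$ whose postcritical set is exactly the set of marked points. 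By the uniqueness-of-thickening considerations in Section 3 of \cite{F}, the thickening $\psi_f$ recovers $\psi$, completing the argument.

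The main obstacle is the path-ness step. While the cusp count is plausible in outline, one must rule out every local configuration at a branching vertex --- in particular configurations in which the infinitesimal loop at $v$ sits on one side of the branching while two expanding edges approach from the other, interacting with the cusps at $v$ in a way that might a priori be compatible with a single polygonal region at $\infty$. I expect this to require a careful case analysis driven by the tangency and cusp conventions at branched vertices of a Bestvina-Handel train track. A secondary, more technical concern is verifying that the collapse step produces an interval map whose Markov structure is consistent with the thickening procedure of \cite{F}, so that $\psi_f = \psi$ follows from the uniqueness statement there rather than requiring a fresh construction.
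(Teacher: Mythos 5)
The paper does not prove this theorem; it is stated in Section 8 as a forward reference to a forthcoming paper with Karl Winsor, so there is no in-paper argument to compare against. Assessing the proposal on its own terms: the opening moves (invoking a Bestvina--Handel track, reading off the complementary regions from the hyperpolar hypothesis) are sound, but the step you yourself flag as the crux --- promoting the expanding tree to a path --- fails as sketched. The cusp and Euler-characteristic counts you would invoke are in fact consistent with \emph{any} tree, not just a path. If the expanding subgraph is a tree $T$ on $p$ vertices with an infinitesimal loop at each vertex, then a vertex $v$ of degree $d$ in $T$ carries $d+2$ edge-ends and hence $d$ cusps, and summing gives $\sum_v \deg_T(v) = 2(p-1) = 2p-2$ cusps, exactly accounted for by $p$ once-punctured monogons and one once-punctured $(p-2)$-gon. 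Topologically, the complement of a tree-with-loops in $S^2$ is always a single outer disc, so a branch vertex does \emph{not} split the region at $\infty$. If branching is in fact impossible, the obstruction must be dynamical --- invariance under $\psi$, the Perron--Frobenius structure of the transition matrix, or the induced permutation of the loops --- rather than anything visible from cusp-counting alone.

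There are two further gaps. First, the claim that ``standard Whitehead-style collapses'' reduce the infinitesimal subgraph to exactly $p$ disjoint loops is an assertion, not an argument: the infinitesimal part of a Bestvina--Handel track can have richer structure than a disjoint union of monogon loops, and collapsing it while simultaneously preserving the train-track tangency condition and the invariance of $\tau$ up to pseudo-isotopy requires a genuine proof. Second, even granting a chain-plus-loops track, recovering $\psi$ as the thickening of the collapsed interval map is not automatic: the thickening procedure of \cite{F} depends essentially on which side of the chain each infinitesimal loop lies, a datum that governs the cusp structure at $\infty$, and your collapse step discards it. You correctly identify both of these as concerns, but as written the proposal does not close either one.
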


Thus, hyperpolar pseudo-Anosovs may be said to have the dynamics of PCF interval maps. In order to truly leverage this connection, however, it would be useful to have a better understanding of when a PCF interval map is of pseudo-Anosov type. In \cite{F} we achieved this explicitly for the case of zig-zag maps from combinatorial data. In general, one would hope for a set of criteria relying on the kneading data of a given interval map to determine if it is of pseudo-Anosov type.

\begin{question}
Given a PCF interval map $f$, is there a set of criteria, depending on the kneading data of $f$, that is equivalent to $f$ being of pseudo-Anosov type?
\end{question}

\bibliography{Kneading}
\bibliographystyle{alpha}
\end{document}